\documentclass[12pt, a4paper]{amsart}
\usepackage{amsmath,amsthm,amssymb}
\usepackage{amscd}
\usepackage[dvipdfmx]{graphicx}
\usepackage{mathrsfs}
\usepackage[dvipdfmx, usenames]{color}
\usepackage[all]{xy}
\usepackage{ascmac}
\usepackage{bm}
\usepackage{stmaryrd}

\usepackage {blkarray}

\newcounter{numA} 
\setcounter{numA}{0} 
\newcounter{numB}
\setcounter{numB}{0}
\newcounter{numC}
\setcounter{numC}{0}

\newtheoremstyle{mystyle}
    {}
    {}
    {\rmfamily}
    {}
    {\rmfamily}
    {.}
    { }
    {\thmname{#1}\thmnumber{ #2}\thmnote{ (#3)}}

\newtheorem{thm}{Theorem}[section]
\newtheorem{lem}[thm]{Lemma}
\newtheorem{cor}[thm]{Corollary}
\newtheorem{prop}[thm]{Proposition}

\newtheorem{conj}[thm]{Conjecture}

\theoremstyle{definition}

\theoremstyle{mystyle}

\newtheorem{rem}[thm]{Remark}        
\newtheorem{example}[thm]{Example}
\newtheorem{defn}[thm]{Definition}

\makeatletter
\renewenvironment{proof}[1][\proofname]{\par
  \pushQED{\qed}%
  \normalfont \topsep6\p@\@plus6\p@\relax
  \trivlist
  \item\relax
  {\rmfamily
  #1\@addpunct{.}}\hspace\labelsep\ignorespaces
}{%
  \popQED\endtrivlist\@endpefalse
}
\makeatother

\numberwithin{equation}{section}

\def\XXint#1#2#3{{\setbox0=\hbox{$#1{#2#3}{\int}$}
\vcenter{\hbox{$#2#3$}}\kern-.5\wd0}}


\newcommand{\R}{\mathbb{R}}

\newcommand{\Lip}{\mathsf{Lip}}

\newcommand{\Cpl}{\mathsf{Cpl}}
\newcommand{\di}{\mathsf{diam}}
\newcommand{\supp}{\mathrm{supp}}

\newcommand{\CDC}{\mathsf{CD}}

\newcommand{\la}{\left\langle}
\newcommand{\ra}{\right\rangle}

\newcommand{\VVert}{{\interleave}} 

\newcommand{\vol}{\mathsf{vol}}
\newcommand{\diam}{\mathsf{diam}}
\newcommand{\KD}{\mathsf{KD}}

\newcommand{\RCD}{\mathsf{RCD}}

\newcommand{\calL}{\mathcal{L}}

\newcommand{\calP}{\mathcal{P}}

\newcommand{\ttb}{\mathtt{b}}

\newcommand{\sfb}{\mathsf{b}}

\newcommand{\RR}{\mathbb{R}} 
\newcommand{\ca}[1]{\mathcal{#1}} 

\newcommand{\ol}[1]{\overline{#1}} 
\newcommand{\ul}[1]{\underline{#1}} 
\newcommand{\wt}[1]{\widetilde{#1}} 




\newcommand{\argmax}{{\mathrm{argmax}}}

\usepackage[abbrev,nobysame]{amsrefs}

\makeatletter
\def\@makefnmark{%
\leavevmode
\raise.9ex\hbox{\check@mathfonts
\fontsize\sf@size\z@\normalfont%
\@thefnmark}%
}
\makeatother

\title[Coarse Ricci Curvature on Hypergraphs]{Coarse Ricci Curvature on Hypergraphs associated with nonlinear Kantorovich difference}
\author[M.Ikeda]{Masahiro Ikeda}
\address[Masahiro Ikeda]{The University of Osaka}
\email{ikeda@ist.osaka-u.ac.jp}
\author[Y.Kitabeppu]{Yu Kitabeppu}
\address[Yu Kitabeppu]{Kumamoto University}
\email{ybeppu@kumamoto-u.ac.jp}
\author[Y.Takai]{Yuuki Takai} 
\address[Yuuki Takai]{Mazda Motor Corporation MAX Project Office}
\email{takai.yu@mazda.co.jp}
\author[T.Uehara]{Takato Uehara} 
\address[Takato Uehara]{Keio University}
\email{takaue@keio.jp}

\makeatletter

\makeatother

\subjclass[2020]{Primary 51K10, Secondary 51F99, 52C99} 

\thanks{This work was partly supported by JST CREST Grant Number JPMJCR1913, Japan, Grant-in-Aid for Young Scientists Research (No.18K13412, No.19K14581) and Grant-in-Aid for Scientific Research (C) (No.21K11763, No.19K03544), Japan Society for the Promotion of Science, and Grant for Basic Science Research Projects from The Sumitomo Foundation (No.200484).}

\begin{document}

\maketitle
 \begin{abstract}
A hypergraph is a generalization of graphs to be able to represent higher-order relations among entities. Since there has been no canonical notion of random walks on hypergraphs, one cannot naturally extend the notions of coarse Ricci curvature of graphs to hypergraphs. In the present paper, we introduce a new notion of Ricci curvature on hypergraphs associated with a nonlinear Kantorovich difference, which is defined through the resolvent of the nonlinear Laplacian.
We prove that our notion is well-defined regardless of the nonlinearity of the Laplacian via linear programming and gives a generalization of Lin-Lu-Yau's coarse Ricci curvature on graphs. 
Under suitable assumptions of our curvature we obtain a lower bound of nonzero eigenvalues of the Laplacian, a gradient estimate of the heat flow, and a diameter bound of Bonnet-Myers type. 
 \end{abstract}

 %
 %
\section{Introduction}


The Ricci curvature of Riemannian manifolds plays an important role to analyze geometric and analytic properties of the manifolds. In the setting of Riemannian manifolds, though the Ricci tensor needs $C^2$ smooth structure on them, lower bound condition of the Ricci curvature can be described by only the metric and measure. 
More precisely, von Renesse et.al. \cite[Section~1]{von2005transport} proved that
 for any smooth, complete, connected Riemannian manifold $(M,g)$ endowed with the Riemannian distance $d_M$, a volume measure $\vol_g$ on it, the Ricci curvature  
 $\mathrm{Ric}_x(v,v)$ for $x \in M$ and $v \in T_x M$, and any 
 $K\in \RR$,   
the following conditions (1)-(5) are equivalent (\cite[Theorems 1.1 and 1.3]{von2005transport}): 
\begin{enumerate}
 \item (Lower bound of Ricci curvature): 
 $\mathrm{Ric}_x(v,v) \geq K|v|^2$ for any $x \in M$ and $v \in T_x M$.
 
 \item (Convexity of relative entropy): The relative entropy 
 defined in \cite[P.924]{von2005transport}
 is the displacement $K$-convex on the $L^2$-Wasserstein space 
 $(\calP_2(M),W_2)$ defined in \cite[P.923--924]{von2005transport} (see \cite{CEMS:2001}).  
 
 \item (Transportation inequality): For the normalized measure restricted to the ball of radius $r$ centered at $x \in M$
 \begin{align}
  m_{r,x}(A):=\frac{\vol_g(B_r(x)\cap A)}{\vol_g(B_r(x))},\ \ \text{for any Borel $A\subset M$} \notag
 \end{align}
 the following asymptotic estimate holds:
 \begin{align}
  W_1(m_{r,x},m_{r,y})\leq \left(1-\frac{K}{2(n+2)}r^2+o(r^2)\right)d_M(x,y),\ \ \ \text{as}\ r\rightarrow\infty.\notag
 \end{align}
 \item (Contraction property of the gradient flow of entropy): For the gradient flow 
$\Phi:\R_{+}\times\calP_2(M)\rightarrow \calP_2(M)$ with respect to the entropy, 
 \begin{align}
  W_2(\Phi(t,\mu),\Phi(t,\nu))\leq e^{-Kt}W_2(\mu,\nu)\notag
 \end{align}
 holds for any $t\ge 0$ and $\mu,\nu\in\calP_2(M)$.  
 
 \item (Gradient estimate of the heat flow): Let $h_t:L^2(M)\rightarrow L^2(M)$ be the heat flow on $M$. For any $f\in C_c^{\infty}(M)$, $x\in M$, and $t>0$, the following holds:
 \begin{align}
  \vert \nabla h_tf\vert^2(x)\leq e^{-2Kt}h_t\vert \nabla f\vert^2(x).\notag
 \end{align} 
\end{enumerate}
Moreover, the following Bochner inequality (or Bakry-\'{E}mery's curvature-dimension condition) is also equivalent to (1)-(5) (see \cite{AGS:2015BE,AGS:2014Met}): 
\begin{enumerate}
 \item[(6)] (Bochner inequality, curvature-dimension condition of Bakry-\'{E}mery type): Let $\Delta$ be the Laplace-Beltrami operator on $C_c^{\infty}(M)$. For any $f\in C_c^{\infty}(M)$, the following holds: 
 \begin{align}
  \frac{1}{2}\Delta\vert\nabla f\vert^2\geq \la \nabla \Delta f,\nabla f\ra+K\vert \nabla f\vert^2. \notag
 \end{align}
\end{enumerate}
Based on these relations, 
the $\CDC$ (Curvature Dimension) space, which was introduced by Sturm~\cite{Sturm:2006mm1,Sturm:2006mm2} and Lott-Villani~\cite{LV:2007, LV:2009} independently, is defined by using the convexity of entropy on 
the $L^2$-Wasserstein space $(\calP_2(M),W_2)$. In the case of finite dimension, as the entropy, not the relative entropy but the R\'enyi entropy is used. The $\CDC$ space is a metric measure space (not necessarily manifold) whose Ricci curvature is bounded from below in a synthetic sense.
An important point is that the definition of $\CDC$ space is described only in terms 
of measures and metrics. 
For the $\CDC$ space whose dimension is bounded from above, 
many important geometric and functional inequalities such as 
Bishop-Gromov inequality \cite{LV:2009}, Poincar\'{e} inequality \cite{Sturm:2006mm2} and 
Brunn-Minkowski inequality \cite{Rajala:2012poincare} were proved. However the gradient estimate of the heat flow does not hold for generic $\CDC$ spaces (see \cite{OS:2012noncontract}).

After that, the $\RCD$ (Riemannian Curvature-Dimension) space was introduced in \cite{AGS:2014Met, Gigli:2015diffstr}, which is a $\CDC$ space equipped with the infinitesimal Hilbertianity condition (defined by Gigli~\cite[Definition 4.9]{Gigli:2015diffstr}) that its associated Sobolev space $W^{1,2}$ becomes a Hilbert space.
On $\RCD$ space, several theorems such as the $W_2$-contraction of the gradient flow of the relative entropy, the Bochner inequality (Bakry-\'{E}mery's curvature dimension condition) and the gradient estimate of the heat flow have been proved and these are known as equivalent conditions in the setting of manifolds. Many geometric results such as Cheeger-Gromoll's splitting theorem \cite{Gigli:2014splitting},  
Cheng's maximum diameter theorem \cite{Ketterer:2015cone}, isoperimetric inequalities \cite{CM:2017isoperimetric} and so on are also proved and they are known in the setting of Riemannian manifolds.

Both $\CDC$ and $\RCD$ spaces become geodesic metric spaces and $\RCD$ spaces established a position as geodesic spaces whose 
Ricci curvature is bounded from below.

It is quite fundamental how to define a concept of {\it Ricci curvature} on generic metric spaces. 
As we mentioned above, on geodesic metric measure spaces, a synthetic notion of "lower bound of Ricci curvature", called the curvature-dimension condition, is defined. 
On the other hand, there are many different notions of lower bound of Ricci curvature on discrete spaces. In the case of discrete spaces, several definitions whose Ricci curvatures are bounded from below were introduced. 
However there has not been a canonical definition. 
For usual graphs, coarse Ricci curvatures of Ollivier \cite{ollivier2009ricci} and Lin-Lu-Yau \cite{lin2011ricci} are related to the above (3) or (4), the curvature dimension 
condition of Bakry-\'{E}mery type~\cite{schmuckenschlager1999curvature} is related to (6), 
the exponentially curvature-dimension condition is related to the 
Li-Yau inequality \cite{BHLLMY:2015LiYau, munch2017remarks, munch2018li}, and the definitions by Maas \cite{Maas:2011, erbar2012ricci} and by Bonciocat-Sturm \cite{bonciocat2009mass} are related to (2). 
Although all of these definitions stem from the definitions or known facts for geodesic spaces, their relations has not been well understood.



A hypergraph is a generalization of graphs to be able to 
represent relations among not only two but also three or more entities. 
There has been no crucial canonical definition of 
random walks on hypergraphs. Hence one cannot naturally define a notion of curvature on hypergraphs in Olliver's manner \cite[Definition 3]{ollivier2009ricci}. 

In this paper, we introduce a new definition (see Definition \ref{def:lambda-coarse-Ricci-curvature}) of 
a coarse Ricci curvature on hypergraphs, which is well-defined and gives an extension of Lin-Lu-Yau's one on graphs. Our coarse Ricci curvature is defined through a nonlinear Kantorovich difference (Definition \ref{def:NKD}). The Kantrovich difference is inspired by the Kantrovich-Rubinstein duality formula \cite[Theorem 5.10]{Villani:2009oldnew} and defined by the resolvent of the so-called "submodular hypergraph Laplacian" (see (\ref{eq:def-of-Laplacian}) and (\ref{normalizedLap})). The notion of this Laplacian was originally introduced by \cite{louis2015hypergraph, chan2017diffusion, yoshida2019cheeger}. Following \cite{ikeda2018finding}, our Laplacian in this paper is a modification of the definition introduced 
in~\cite{louis2015hypergraph, chan2017diffusion}, and 
a realization of the submodular transformation  
 introduced in \cite{yoshida2019cheeger} when the submodular transformation is a hypergraph (see also Subsection \ref{subsec:submodular-transformations}). 
  
Asoodeh et al \cite{asoodeh2018curvature} introduced a different notion of a Ricci curvature on hypergraphs by using random walks defined by reducing hypergraphs to usual graphs with clique expansion. 

 
 The hypergraph Laplacian was introduced as meaningful from an information engineering point of view, and some research has shown that it can yield good information about hypergraphs. In particular, in \cite{takai2020hypergraph}, it was experimentally proven (in terms of community extraction, especially in terms of spectral graph theory) that hypergraphs can be extracted for their properties as hypergraphs rather than being attributed to ordinary graphs obtained by clique and star expansions. Therefore, we considered that by using this hypergraph Laplacian, the curvature could be defined with more fruitful information about the hypergraph.
However, because this Laplacian is multivalued and nonlinear, there was no canonical way to define the transition probabilities of random walkers using it. For these reasons, we considered the Lin-Lu-Yau definition as a definition using resolvents, and by extending it with resolvents that can be defined even for nonlinear multivalued Laplacians, we thought we could define curvature suitable for hypergraphs.

Recently other notions of Ricci curvature on (directed) hypergraphs were introduced in \cite{EFLSJ:2020,eidi2020ollivier,LRSJ:2018,Akamatsu}.

As connections of the value of our coarse Ricci curvature, 
under similar assumptions of the lower bound of the curvature as Lin-Lu-Yau type, we can deduce a lower bound of nonzero eigenvalues of the normalized Laplacian (Theorem \ref{subsec:eigenvalues}) and a gradient 
estimate of the heat flow of $L^\infty$ type (Theorem \ref{gradient estimate}). Under positive Ricci curuvature, we prove a diameter bound of Bonnet-Myer's type (Theorem \ref{Bonnet-Myers}). It should be noted that these properties do not hold for general $\CDC$ spaces, which implies that one cannot necessarily handle the nonlinearity of our Laplacian.
%

Our arguments for the proofs except for Theorem \ref{thm:existence-of-limit} are applicable to more general settings for submodular transformations \cite[Section 3]{yoshida2019cheeger}, which are vector valued set functions consisting of submodular functions and includes the settings of directed (hyper) graphs and mutual information.

The rest of this paper is organized as follows.
In Section~\ref{sec:preliminaries}, we recall several notions of hypergraphs and basic properties of the submodular hypergraph Laplacian and its resolvent. In Subsection~\ref{subsec:prelim-wasserstein-dist}, we recall basic notions of the metric measure space.
In Subsection~\ref{subsec:review-of-LLY-graph}, we recall the definition of Lin-Lu-Yau's coarse Ricci curvature on usual graphs \cite{lin2011ricci}. In Subsection \ref{rephrase Lin-Lu-Yau's coarse Ricci}, we explain the difficulty of extending Lin-Lu-Yau's coarse Ricci curvature on graphs to hypergraphs and our idea to overcome it. 
In Section \ref{sec:def-of-curvature-hypergraphs}, we introduce the definitions of nonlinear Kantorovich difference and our coarse Ricci curvature on hypergraphs and prove their properties.
In Section~\ref{sec:comparison-on-graph}, we show that in the case of usual graphs, our Ricci curvature is equal to Lin-Lu-Yau's. In Section~\ref{sec:applications}, as connections of our curvature with analytic and geometric properties of hypergraphs, we prove a bound of eigenvalues of the Laplacian, 
a gradient estimate for the heat flow, 
and a Bonnet-Myers type diameter bound. 
In Section ~\ref{sec:examples}, we give several examples of our curvature. 
As the reader seen, strict calculation of curvature for large networks is not easy. However, approximate solutions can be computed as follows. In this paper, the curvature is defined as a limit of the supremum of the differences of values of the resolvent for the hypergraph Laplacian.  The resolvent itself is a personalized PageRank on a hypergraph, as expressed in equation (3) of Section~3.3 in~\cite{takai2020hypergraph}. Using the heat method in~\cite{takai2020hypergraph}, an approximate calculation is possible in a short time.
To calculate an approximated curvature, we also need to calculate a limit of the supremum by running the $1$-Lipschitz function, but this can also be done as a coarse approximation. (It may also be possible to compute the approximation using methods such as design of experiments or Bayesian optimization.)
In Section ~\ref{sec:existenceRicci}, we give a proof of existence of our coarse Ricci curvature for general hypergraphs via linear programming. In Section~\ref{sec:general-settings}, we review 
submodular transformations and the submodular Laplacian 
and give a sufficient condition for a submodular transformation to be able to generalize our curvature notion and theorems to submodular transformations. We show examples of submodular transformations such as directed (hyper)graphs, mutual information etc. in Subsection~\ref{subsec:submodular-examples}.  


  
 %
 %
 \section{Preliminaries}\label{sec:preliminaries}
 
\subsection{Hypergraph} 
\label{subsec:prelim-hypergraphs}
A weighted undirected hypergraph $H = (V, E, w)$ is a triple of a set $V$, 
a set $E\subset V\setminus \{\emptyset\}$ of nonempty subsets of $V$, and a function $w\colon E \to \RR_{>0}$, where $\RR_{>0}:=\{c\in \R\ ;\ c>0\}$. We call an element of $V$ vertex, an element of $E$ hyperedge, and $w$ an edge weight. 
We remark that if $|e| =2$ for any $e \in E$, $H$ is a weighted undirected (usual) graph. Here $|A|$ denotes the cardinality of a set $A$. We say that $H$ is finite if $V$ is finite. 
For $x,y\in V$, we write $x\sim y$ if there exists $e\in E$ such that $x,y\in e$. 
We say that $H$ is connected if for any $x, y\in V$, 
there exists $\{z_i \}_{i=0}^{n}\subset V$ such that $z_0 = x, \ z_{n} = y, \ \text{and} \  \ z_i \sim z_{i+1} \ (i = 0, \dots, n-1)$. Throughout of this paper we assume that 
\begin{equation}
\label{assumptiononhypergraph}
\text{any hypergraph $H$ is finite and 
 connected.} 
\end{equation} 
For $x\in V$, we define the degree of $x$ by $d_x:=\sum_{e\ni x}w_e$. We also define the degree matrix of $V$ by $D:=\mathrm{diag}(d_x)$. 
Since $H$ is connected, then $d_x>0$ for any 
$x\in V$, which implies that $D$ is non-singular, i.e. the inverse $D^{-1}$ of $D$ exists. 
For $S \subseteq V$, the volume of $S$ is defined by $\vol(S):= \sum_{x \in S} d_x$. We introduce a distance $d$ on $V$ defined by 
 \begin{align}
 \label{distance}
  d(x,y):=\min\{n\;;\;\exists\{z_i\}_{i=0}^n,\,z_0=x,\,z_n=y,\;z_i\sim z_{i+1}\},\ \ \ \text{for}\ x,y\in V.
 \end{align}
Then $(V, d)$ becomes a metric space. We define a diameter of $H$, denoted by $\diam(H)$, as that of the metric space $(V,d)$, i.e., $\diam(H) := \max_{x,y\in V} d(x,y)$. We identify the set of all real-valued maps on $V$ with the set $\RR^V$ of vectors indexed by $V$. We denote by $\delta_x\in \R^V$ the characteristic function at $x\in V$, i.e. $\delta_x(z)=1$ if $z=x$ and $\delta=0$ if $z\ne x$.
We define the stationary distribution $\pi \in \RR^V$ by 
$\pi(z) := d_z/\vol(V)$ for $z \in V$.   

\subsection{(Submodular) Laplacian on hypergraph} 
\label{subsec:prelim-laplacian}
We recall the definitions of the submodular hypergraph Laplacian and the normalized version in the sense of Ikeda et al~\cite{ikeda2018finding} and recall their several properties.
We define an inner product $\langle\cdot,\cdot\rangle:\mathbb{R}^V\times\mathbb{R}^V\rightarrow \mathbb{R}$ as 
\[
 \langle f, g \rangle := f^\top D^{-1} g = \sum_{x\in V} f(x)g(x)d_x^{-1}.
\]
Here we use $A^\top$ to denote the transpose of a matrix $A$. We note that $(\RR^V, \langle\cdot,\cdot\rangle )$ is a finite dimensional Hilbert space. We also introduce a norm $\|\cdot\|:\mathbb{R}^V\rightarrow\mathbb{R}_{\ge 0}$ given by $\|f\|:=\langle f,f\rangle^{1/2}$.

We define 
the (submodular) hypergraph 
Laplacian $L\colon \RR^V \to 2^{\RR^V}$ by 
\begin{align}
  & L(f)=Lf:=\left\{\sum_{e\in E}w_e\ttb_e(\ttb_e^{\top}f)\;;\;\ttb_e\in\text{argmax}_{\ttb\in B_e}\ttb^{\top}f\right\},
  \label{eq:def-of-Laplacian}
  \end{align}
where $B_e$ denotes the base polytope for $e\in E$, i.e., the subset of $\RR^V$ defined by  
\begin{align} 
 B_e := \text{Conv}(\{ \delta_x - \delta_y\ ;\  x, y \in e \}).
 \label{eq:base-polytope-hyper}
\end{align}
Here $\text{Conv}(X)$ is the convex hull of 
$X$ in $\RR^V$. This Laplacian $L$ might be multi-valued and nonlinear \cite[Remark 3.2]{Ikeda:2023}, and $L$ is the sub-differential of the convex function $Q\colon \RR^V \to \RR$ defined by  
\[
 Q(f) := \frac{1}{2}\sum_{e\in E} w(e) \max_{x,y\in e} (f(x)-f(y))^2,
\]
(see \cite[Section 2]{yoshida2019cheeger} and \cite[P.15:8]{chan2018spectral}).
Namely the identity $Lf=\partial Q(f)$ holds for any $f\in \R^V$, where $\partial Q:\R^V\rightarrow 2^{\R^V}$ is defined by 
\[
   \partial Q(f):=\{g\in \R^V\ ;\ g^\top(h-f)\le Q(h)-Q(f),\ \text{for any}\ h\in \R^V\}.
\]
 Thus we see that $L$ is a maximal monotone operator (or $-L$ is an $m$-dissipative operator) such that the domain of $L$ is $\R^V$ (see \cite[Lemmas 14, 15]{ikeda2018finding}). 
When the hypergraph $H$ is a usual graph, the (submodular) hypergraph Laplacian $L$ becomes linear and single-valued and $L(f) = \{ (D-A)f \} $, where $A=(w_e)\in \mathbb{R}^{V\times V}$ is the weighted adjacency matrix of the graph (see \cite[Example 3.3]{yoshida2019cheeger} and \cite[Remark 2.3]{Ikeda:2023}).

We also introduce the normalized hypergraph Laplacian 
$\ca{L}\colon \RR^V \to 2^{\RR^V}$ given by
\begin{align}
\label{normalizedLap}
  & \ca{L}(f)= \calL f :=L(D^{-1}f). 
 \end{align}
We note that $\mathcal{L}$ is related to random walk and heat diffusion (see \cite[Subsection 3.3]{takai2020hypergraph}). By \cite[Lemmas 14, 15]{ikeda2018finding}, 
$\ca{L}$ is also a maximal monotone operator on the Hilbert space 
$(\RR^V, \la\cdot,\cdot \ra)$. More strongly, $\calL f$ is the sub-differential of $Q$ at $D^{-1}f$, that is, the identity $\calL f=\partial Q(D^{-1}f)$ holds. We show other properties of $\mathcal{L}$ as follows.
\begin{lem}\label{lem:homogenity-of-Laplacian}
Let $f\in \RR^V$ and $c\in \RR$. Then the following identities hold:
\begin{enumerate}
\item $\ca{L}(cf) = c\ca{L}(f)$,
\item $\calL (f)=\calL (f+c\pi)$, where $\pi \in \RR^V$ is the stationary distribution.
\end{enumerate}
\end{lem}

\begin{proof}
\underline{About (1):}
When $c=0$, the identity is trivial. We note that for any $e\in E$, if $b\in B_e$, then $-b\in B_e$, which implies that $\max_{b\in B_e}\langle \ttb,f\rangle\ge 0$. Assume that $c>0$. Then for any $e\in E$, the identity $\argmax_{\ttb \in B_e} \la \ttb, cf \ra = 
 \argmax_{\ttb \in B_e} \la \ttb, f \ra$ holds, which implies the conclusion. Next we consider the case $c<0$. Then we have 
\[
 \argmax_{\ttb \in {B_e}} \la \ttb, cf \ra = 
 -\argmax_{\ttb \in B_e} \la \ttb, f \ra,
\]
which means the conclusion.

\underline{About (2):} Let $\sfb \in B_e$. Because $\sfb$ is a convex combination of $\delta_x - \delta_y$ for $x, y \in e$ and $D^{-1}\pi(z)=1/\vol(V)$ for $z\in V$, 
we have $\sfb^\top D^{-1} \pi = 0$. Hence we have  
  \begin{align}
   \sfb^{\top}(D^{-1}(f+c\pi))=\sfb^{\top}(D^{-1}f)+c\sfb^{\top}(D^{-1}\pi)=\sfb^{\top}(D^{-1}f), \notag
 \end{align}
which implies that $\calL f=\calL (f+c\pi)$ holds.
\end{proof}

For $\lambda>0$, the resolvent $J_\lambda\colon \RR^V \to 2^{\RR^V}$ of $\mathcal{L}$ is defined as
\begin{equation}
\label{resolvent}
 J_{\lambda}(f)=J_{\lambda}f:=(I+\lambda\calL)^{-1}(f). 
\end{equation}
Here for a multivalued operator $A:\R^V\rightarrow 2^{\R^V}$, the invese $A^{-1}$ is defined by 
\[
 A^{-1}(f) := \{g \in \RR^V ; f \in A(g) \}
\]
with the domain of $A^{-1}$ equal to the range of $A$.
We summarize several properties of $J_{\lambda}$ as follows (see \cite[Corollary 2.10, Lemma~2.11(iii)]{miyadera1992nonlinear} and \cite[Proposition 1.8]{showalter2013monotone}):
\begin{lem}
\label{lem:conti-of-resolvent} 
Let $\lambda>0$ and $J_\lambda$ be defined by (\ref{resolvent}). Then the following holds:
\begin{enumerate}
\item $J_{\lambda}$ is single-valued and its domain and range are $\R^V$. In particular $J_{\lambda}$ is injective.
\item $J_{\lambda}$ is non-expansive, i.e.,  
for any $f, g \in \RR^V$, and any $f'\in J_\lambda(f)$, $g'\in J_\lambda(g)$, the estimate
\[
 \| f' - g' \| \leq  \| f-g \|
\]
holds. Especially, $J_{\lambda}$ is continuous.
\item For any $\mu > 0$ and $f\in \R^V$, the following equation is valid:
\begin{align}
 J_{\lambda}f=J_{\mu}\left(\frac{\mu}{\lambda}f+\frac{\lambda-\mu}{\lambda}J_{\lambda}f\right).
\end{align}
\item (Moreau's theorem): The following identity holds:
\begin{align}
\label{Moreau's theorem}
 J_{\lambda}f=\mathrm{argmin}\left\{\frac{1}{2\lambda}\Vert f-g\Vert^2+Q(D^{-1}g)\;;\;g\in \R^V\right\}.
\end{align} 
\end{enumerate}
\end{lem}
%
%
%
%
%

We derive other properties of $J_\lambda$ from those of the normalized Laplacian $\ca{L}$: 
\begin{lem}\label{lem:homogenity-of-resolvent}
Let $\lambda>0$, $f\in \RR^V$ and $c\in \RR$. Then the following identities hold:
\begin{enumerate}
\item $J_\lambda(cf) = cJ_\lambda(f)$,
\item $J_{\lambda}f=J_{\lambda}(f-c\pi)+c\pi$.
\end{enumerate}
\end{lem}

\begin{proof}
%
\underline{About (1):} When $c=0$, the identity is trivial. For nonzero $c\in \RR$, let $g:= J_\lambda(cf)$. Then, 
$cf \in (I+\lambda\ca{L})(g)$ holds by the definition of $J_\lambda$. 
Thus, we have $f \in (I + \lambda\ca{L})(c^{-1}g)$ by Lemma \ref{lem:homogenity-of-Laplacian}. This implies $c^{-1}g = J_\lambda(f)$, hence $g = cJ_\lambda(f)$. 

\underline{About (2):} We set $g:=J_{\lambda}f$ and $h:=J_{\lambda}(f-c\pi)$. Then there exist $g'\in\calL g$ and $h'\in\calL h$ such that the identities $f=g+\lambda g'$ and $f-c\pi=h+\lambda h'$ hold. Thus, we have  
   \begin{align*}
   (I+\lambda\ca{L})(g) &\ni g+\lambda g' =f=h+\lambda h'+c\pi \\ 
   &\in (h+c\pi) +\lambda \ca{L}(h)\notag = (h+c\pi)+\lambda \ca{L}(h + c \pi) = (I+\lambda\ca{L})(h + c\pi).\notag
   \end{align*}
Here the inclusion follows from Lemma \ref{lem:homogenity-of-Laplacian}. Therefore, acting $J_{\lambda}=(I+\lambda \ca{L})^{-1}$ to the both sides, we get $g=h+c\pi$ 
because $J_\lambda$ is injective. 

%
%
\end{proof}

Since $\mathcal{L}$ is a maximal monotone operator such that the domain is $\mathbb{R}^V$, by \cite[Theorem 4.2]{miyadera1992nonlinear}, the heat semigroup
$\{e^{-t\calL}\}_{t\ge 0}$ is well defined on $\mathbb{R}^V$ and the following identity holds:
\begin{align}
   e^{-t\mathcal{L}} f=\lim_{\lambda\downarrow0}J_{\lambda}^{[t/\lambda]}f,\ \ \ t\ge 0\ \text{and}\ f\in \R^V.
  \label{eq:heat-and-resolvent} 	
\end{align}
Here $[a]$ is the maximum integer less than or equal to $a\in \RR$. For $f\in \mathbb{R}^V$, we set
 \begin{align}
  \VVert \calL f\VVert:=\inf\left\{ \Vert f'\Vert\;;\;f'\in\calL f\right\}\notag.
 \end{align}
Then by \cite[Lemma~2.11 (ii)]{miyadera1992nonlinear} the following holds:
 \begin{align}
  \Vert J_{\lambda}f-f\Vert \leq \lambda{\VVert} \calL f\VVert,\ \ \ \text{for}\ \lambda>0\ \text{and}\ f\in \R^V.\label{eq:bound-by-operator-norm}
 \end{align}
Since $\calL f$ is a closed convex set 
by \cite[Lemma~2.15]{miyadera1992nonlinear}, 
there exists a unique $f'\in\calL f$ such that $\Vert f'\Vert=\VVert \calL f\VVert$ by \cite[Lemma 2.19]{miyadera1992nonlinear}. 
We set $\calL^0 f$ as this $f'$, i.e. $\calL^0 f=f'$. This defines a single-valued operator 
$\calL^0\colon \RR^V \to \RR^V$, called the 
{\em canonical restriction} of $\calL$. 
Then by \cite[Lemma~2.22 and Theorem~3.5]{miyadera1992nonlinear}, the following identities hold: 
 \begin{align}
  -\calL^0f=\lim_{\lambda\downarrow 0}\lambda^{-1}(J_{\lambda}f-f)=\lim_{t\downarrow0}t^{-1}(e^{-t\mathcal{L}}f-f).\label{eq:L0-and-limit-of-resolvent}
 \end{align}
\subsection{$L^1$-Wasserstein distance}
\label{subsec:prelim-wasserstein-dist}
Let $(X,\mathsf{d},m)$ be a metric measure space, that is, $(X,\mathsf{d})$ is a complete separable metric space and $m$ is a locally finite Borel measure on $X$. 
We set $\calP(X)$ as the set of all Borel probability measures. For $\mu,\nu\in\calP(X)$, a measure $\xi\in\calP(X\times X)$ is called a coupling between $\mu$ and $\nu$ if 
\[  \xi(A\times X)=\mu(A),\ \ \ 
  \xi(X\times A)=\nu(A)
\]holds for any Borel set $A\subset X$. We set $\Cpl(\mu,\nu)$ as the set of all couplings between $\mu$ and $\nu$. Since the product measure $\mu\otimes \nu$ of $\mu$ and $\nu$ is a coupling between $\mu$ and $\nu$, $\Cpl(\mu,\nu)$ is nonempty. We define the $L^1$-Wasserstein space $\calP_1(X)$ by 
\begin{align}
 \calP_1(X):=\left\{\mu\in\calP(X)\;;\;\int_X\mathsf{d}(x,o)\,\mu(dx)<\infty\;\text{for a point }o\in X\right\}.\notag
\end{align} 
  For $\mu,\nu\in\calP_1(X)$, the $L^1$-Wasserstein distance between them, denoted by $W_1(\mu,\nu)$, is defined as 
  \begin{align}
\label{L1-Wasserstein}
   W_1(\mu,\nu):=\inf\left\{\int_{X\times X}\mathsf{d}(x,y)\,\xi(dx,dy)\;;\;\xi\in\Cpl(\mu,\nu)\right\}.
  \end{align}
It is known that $W_1$ is a metric on $\calP_1(X)$ and the following duality formula for $W_1$ holds (see \cite[Theorem 5.10]{Villani:2009oldnew} for example).
 \begin{prop}[Kantorovich-Rubinstein duality]\label{prop:KR-duality}
  For $\mu,\nu\in\calP_1(X)$, 
  \begin{align}
   W_1(\mu,\nu)=\sup\left\{\int_Xf\,d\mu-\int_Xf\,d\nu\;;\;f\text{is 1-Lipschitz}\right\}\label{eq:KDlin}
  \end{align}
  holds. Here we say that $f$ is $1$-Lipshitz if for any $x,y\in X$, the estimate $|f(x)-f(y)|\le \mathsf{d}(x,y)$ holds.
 \end{prop}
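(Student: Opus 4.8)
The plan is to prove the Kantorovich--Rubinstein duality \eqref{eq:KDlin} by reducing it to the classical linear programming duality for optimal transport on a finite metric space. Since every hypergraph in this paper is finite, the metric space $(V,d)$ is finite, and so $\calP_1(V) = \calP(V)$ is just the probability simplex; all integrals become finite sums and all infima/suprema are attained. Hence there is no measure-theoretic subtlety, and the statement is really a finite-dimensional LP duality.

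First I would set up the primal problem: minimize $\sum_{x,y} d(x,y)\xi(x,y)$ over $\xi \in \RR_{\geq 0}^{V\times V}$ subject to the marginal constraints $\sum_y \xi(x,y) = \mu(x)$ and $\sum_x \xi(x,y) = \nu(y)$. This is a finite linear program with a nonempty feasible set (the product coupling $\mu\otimes\nu$) and bounded objective, so strong duality applies. Writing down its dual, one introduces a variable $f(x)$ for each constraint $\sum_y\xi(x,y)=\mu(x)$ and a variable $g(y)$ for each constraint $\sum_x\xi(x,y)=\nu(y)$; the dual constraint coming from the variable $\xi(x,y)\geq 0$ is $f(x)+g(y)\leq d(x,y)$, and the dual objective is $\max \sum_x f(x)\mu(x) - \sum_y g(y)\nu(y)$ (a sign flip on $g$ gives the constraint $f(x)-g(y)\le d(x,y)$ and objective $\sum f\,d\mu - \sum g\,d\nu$). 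So LP duality gives
\[
 W_1(\mu,\nu) = \sup\Bigl\{ \textstyle\sum_x f(x)\mu(x) - \sum_y g(y)\nu(y) \;;\; f(x)-g(y)\leq d(x,y)\ \forall x,y\in V \Bigr\}.
\]
The remaining work is to show this equals the supremum over a single $1$-Lipschitz function, i.e.\ the right-hand side of \eqref{eq:KDlin}. For the inequality $\geq$: given a pair $(f,g)$ feasible for the LP dual, define $\tilde f(x) := \inf_{y\in V}\bigl(g(y)+d(x,y)\bigr)$; one checks $\tilde f$ is $1$-Lipschitz (it is an infimum of $1$-Lipschitz functions $x\mapsto g(y)+d(x,y)$, using the triangle inequality), that $\tilde f(x)\geq f(x)$ (feasibility), and that $\tilde f(y)\leq g(y)$ (take the term with index $y$, since $d(y,y)=0$). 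Therefore $\sum_x\tilde f\,\mu - \sum_y\tilde f\,\nu \geq \sum_x f\,\mu - \sum_y g\,\nu$, so passing to the sup over $1$-Lipschitz functions only increases the value. For the inequality $\leq$: any $1$-Lipschitz $f$ gives the feasible pair $(f,f)$ for the LP dual since $f(x)-f(y)\leq d(x,y)$, so the $1$-Lipschitz supremum is dominated by the LP dual value $=W_1(\mu,\nu)$.

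I do not expect a serious obstacle here: the only thing to be careful about is the bookkeeping of signs in passing between the two forms of the dual, and the verification that the ``$c$-transform'' $\tilde f(x)=\inf_y(g(y)+d(x,y))$ is $1$-Lipschitz and sandwiched between $f$ and $g$ as claimed. The strong duality of finite LPs is standard and can simply be cited; alternatively one could cite the classical Kantorovich--Rubinstein theorem directly (e.g.\ from Villani's book) since $(V,d)$ is a compact --- indeed finite --- metric space, in which case the proof reduces to the two-line Lipschitz-regularization argument above. If a self-contained argument is preferred over citing LP duality, one can instead invoke the min-max theorem for the bilinear form $(\xi,f)\mapsto \sum_{x,y}(d(x,y)\xi(x,y)) + \sum_x f(x)(\mu(x)-\sum_y\xi(x,y)) + \cdots$ over the compact convex set of couplings and the finite-dimensional space of potentials.
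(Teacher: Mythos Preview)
Your proposal is correct and complete for the finite setting relevant to this paper: reducing to finite LP duality and then collapsing the two-potential dual to a single $1$-Lipschitz potential via the $c$-transform $\tilde f(x)=\inf_y(g(y)+d(x,y))$ is exactly the standard route, and your sandwich inequalities $f\le\tilde f\le g$ are stated with the right justifications.

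By way of comparison, the paper itself gives no proof of this proposition at all --- it is stated as a known result (in the vein of Villani's book, which is later cited for the existence of Kantorovich potentials). So there is nothing to compare approaches against; your write-up would in fact supply a self-contained argument where the paper simply cites the classical theorem. If you want to match the paper's level of detail, a one-line reference suffices; if you want to include a proof, what you have is fine and the only cosmetic suggestion is to note explicitly that strong LP duality applies because the primal is feasible (via $\mu\otimes\nu$) and has finite optimal value (since $d$ is bounded on the finite set $V\times V$), which you already mention.
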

 We call a $1$-Lipschitz function $f$ that realizes the supremum of (\ref{eq:KDlin}) a {\em Kantorovich potential}.

\subsection{Coarse Ricci curvature on usual graphs of Lin-Lu-Yau type}
\label{subsec:review-of-LLY-graph}
In this subsection we recall the definition of the coarse Ricci curvature on usual graphs of Lin-Lu-Yau's type \cite[P609]{lin2011ricci}. As shown in Proposition \ref{prop:campare-LLY} below, our definition of the curvature on hypergraphs gives a generalization of the Lin-Lu-Yau's type.
Let $G=(V,E)$ be a simple graph, that is, $V$ is a set and $E\subset V\times V\setminus \{(x,x)\;;\;x\in V\}$. Here we do not distinguish $\{x,y\}$ and $\{y,x\}\in E$. For $x,y \in V$, $x\sim y$ means $\{x,y\}\in E$. Given $x,y\in V$, a sequence of points $\{z_i\}_{i=0}^n$ is called a path from $x$ to $y$ if $z_0=x$, $z_n=y$, $z_i\sim z_{i+1}$ for $i=0,\cdots, n-1$, and $n$ is called the length of path. 
The distance $d(x,y)$ of $x,y\in V$ as the least number of lengths of paths from $x$ to $y$. A path $\{z_i\}_{i=0}^n$ is said to be geodesic if it realizes the distance between $z_0$ and $z_n$. We introduce a weight function $w:V\times V\rightarrow \R_{\geq 0}$ such that $w(x,y)>0$ if and only if $x\sim y$. The  degree of $x\in V$ is defined by $d_x:=\sum_{y\in V}w(x,y)$. Now that $G$ is a usual graph, the normalized Laplacian $\mathcal{L}$ defined by (\ref{normalizedLap}) becomes linear and single-valued and $\mathcal{L}=\{I-AD^{-1}\}$, where $A:= (w(x,y))_{x,y}\in \RR^{V\times V}$ is the adjacency matrix of $G$. 
\smallskip
\par For $\alpha\in (0,1)$ and $x \in V$, we introduce a function $m_x^{\alpha}$ from $V$ to $\mathbb{R}_{>0}$ defined by 
\begin{align}
 m_x^{\alpha}(y):=
 \begin{cases}
  \alpha&\text{if }y=x,\\
  \frac{1-\alpha}{d_x}w(x,y)&\text{if }y\sim x,\\
  0&\text{otherwise}.  
 \end{cases}\notag
\end{align}
We can regard $m_x^{\alpha}$ as a probability measure on $V$ and $m_x^{\alpha}\in \calP_1(V)$. For $\alpha\in (0,1)$ and two distinct vertices $x$ and $y$, we define the $\alpha$-lazy coarse Ricci curvature $\kappa^{\alpha}(x,y)$ between $x$ and $y$ by 
\begin{align}
\label{alpha-lazy}
 \kappa^{\alpha}(x,y):=1-\frac{W_1(m_x^{\alpha},m_y^{\alpha})}{d(x,y)}.
\end{align}
Lin-Lu-Yau \cite{lin2011ricci} introduced the coarse Ricci curvature $\kappa^{\mathrm{LLY}}(x,y)$ on $G$ given by 
\begin{align}
 \kappa^{\mathrm{LLY}}(x,y):=\lim_{\alpha\uparrow1}\frac{\kappa^{\alpha}(x,y)}{1-\alpha}\label{eq:LLYdef}
\end{align}
and proved its several properties including existence of the limit (\ref{eq:LLYdef}). 

\subsection{Rephrase Lin-Lu-Yau's coarse Ricci curvature}
\label{rephrase Lin-Lu-Yau's coarse Ricci}
We note that for $\alpha\in (0,1)$ and $x\in V$, $m_x^{\alpha}$ can be written as 
\[
 m_x^\alpha = (\alpha I + (1-\alpha)AD^{-1})\delta_x 
 = (I - (1-\alpha)\ca{L})\delta_x.
\]
However it is difficult to generalize $W_1(m_x^\alpha, m_y^\alpha)$ to the case of hypergraphs since our normalized hypergraph Laplacian $\mathcal{L}$ given by (\ref{normalizedLap}) is generally multi-valued and nonlinear. To overcome the difficulty, we give the following observation.

By using the above expression, the identities hold: 
\begin{align*}
 \int_V f \, d m_x^\alpha &
 = f^\top m_x^\alpha 
 = f^\top (I - (1-\alpha)\ca{L})\delta_x
 = \la (I - (1-\alpha)\ca{L})Df, \delta_x\ra.  
\end{align*}
Let $y\in V$. By the Kantorovich-Rubinstein duality 
(Proposition~\ref{prop:KR-duality}), 
$W_1(m_x^\alpha, m_y^\alpha)$ can be written as  
\begin{align}
 W_1(m_x^{\alpha},m_y^{\alpha}) = 
 \sup\left\{\la (I - (1-\alpha)\ca{L})Df, \delta_x-\delta_y \ra \;;\;f\text{ is a } 1\text{-Lipschitz}\right\}. 
 \label{KR-duality-graph}
\end{align}
Let $\lambda:= 1-\alpha\in (0,1)$ and $J_\lambda:= (I +\lambda \ca{L})^{-1}$ be the resolvent of $\mathcal{L}$. Then for any $f\in \R^V$, the identity 
\begin{align}
 (I-\lambda\ca{L})(g) = J_\lambda(g) + O(\lambda^2) 
 \label{eq:infiniticimal-estimate-resolvent-linear}
\end{align}
holds for sufficiently small $\lambda>0$.
Indeed, since $G$ is a usual graph, $\ca{L}$ is a matrix, which enables us to apply Neumann 
series expansion to get
\begin{align*}
    J_{\lambda}g=g-\lambda\calL g+\sum_{k=2}^{\infty}(-\lambda\calL)^k g 
    = (I -\lambda \ca{L})g + O(\lambda^2),\ \ \ \text{as}\ \lambda\rightarrow +0.
\end{align*}
We introduce a $\lambda$-linear Kantorovich difference 
$\KD_\lambda(x,y)$ as 
\begin{align}
 \KD_\lambda(x,y) := 
 \sup\left\{\la J_\lambda Df, \delta_x-\delta_y \ra \;;\;f\text{ is } 1\text{-Lipschitz}\right\}. \label{eq:KD-graph}
\end{align}
Then by the estimate~\eqref{eq:infiniticimal-estimate-resolvent-linear}, we can show 
\begin{align}
 W_1(m_x^\alpha, m_y^\alpha) = \KD_\lambda(x,y) + o(\lambda),\ \ \ \text{as}\ \lambda\rightarrow +0.
 \label{estimate-W1-KD-linear} 
\end{align}
We will prove this identity rigorously in Section~\ref{sec:comparison-on-graph}. 
The crucial point to extend the definition of Lin-Lu-Yau's curvature notion on graphs
to hypergraphs is that $\lambda$-Kantrovich difference $\KD_\lambda(x,y)$ can be extended naturally to hypergraphs, since the resolvent $J_\lambda$ of our hypergraph Laplacian is single-valued (Lemma \ref{lem:conti-of-resolvent}).


%
%
\section{Definition of coarse Ricci curvature on hypergraphs}
\label{sec:def-of-curvature-hypergraphs}

Let $H=(V,E,w)$ be a weighted undirected hypergraph.

\subsection{Nonlinear Kantorovich difference} 
\label{subsec:def-of-nonlinear-KD}
In this subsection we introduce a notion of nonlinear Kantrovich difference, which is a natural generalization of 
\eqref{eq:KD-graph}, and prove its several fundamental properties. They are used to derive several properties of our coarse Ricci curvature on hypergraphs (see Subsection \ref{subsec:def-of-coarse-ricci-curvature-hypergraph}).

Let $d:V\times V\rightarrow \R_{\ge 0}$ be a distance defined by (\ref{distance}) and $K>0$. A function $f:V\rightarrow \mathbb{R}$ is said to be \emph{weighted $K$-Lipschitz} 
 if $D^{-1}f$ is a $K$-Lipschitz function with respect to $d$, that is, $f$ satisfies 
\[
 \frac{f(x)}{d_x} - \frac{f(y)}{d_y} \leq K d(x,y)
\]
for any $x, y \in V$. The left hand side can be written as $\la f,\delta_x-\delta_y\ra$. We denote the set of all weighted $K$-Lipschitz functions on $V$ as 
  $\Lip_w^K(V)$. Note that if $f\in \Lip_w^K(V)$, then so is $-f$.
  \begin{defn}[$\lambda$-nonlinear Kantorovich difference]
  \label{def:NKD}
Let $\lambda>0$, $J_{\lambda}$ be the resolvent (\ref{resolvent}) of the normalized hypergraph Laplacian $\mathcal{L}$ and let $x,y\in V$. Then the \emph{$\lambda$-nonlinear Kantorovich difference} $\KD_{\lambda}(x,y)$ of $x$ and $y$ is defined by
   \begin{align}
    \KD_{\lambda}(x,y):=\sup\left\{\langle J_{\lambda}f,\delta_x\rangle-\langle J_{\lambda}f,\delta_y\rangle\;;\; f\in\Lip_w^1(V)\right\}.\notag
   \end{align}
  \end{defn}
 \begin{rem}
 \label{basicpropertiesKD}
\begin{enumerate} 
\item Since $0\in \Lip_w^1(V)$, the estimate $\KD_{\lambda}(x,y)\ge 0$ holds.
\item Let $\lambda>0$, $x,y\in V$ and $f\in \Lip_w^1(V)$. We write formally 
  \begin{align}
   \int f\,d\mu^{\lambda}_x:=\langle J_{\lambda}f,\delta_x\rangle\notag.
  \end{align}
If the hypergraph $H$ is a usual graph, $\mu^{\lambda}_x$ becomes a measure. 
 \end{enumerate}  
 \end{rem}

We introduce a weighted maximum norm $\|\cdot\|_{\infty}$ on $\R^V$ given by
\begin{align}
    \Vert f\Vert_{\infty}:=\max_{x\in V} \left|\frac{ f(x)}{d_x} \right|.\notag
   \end{align}
We also introduce a bounded and closed subset of $\Lip_w^1(V)$ given by
\[
 \wt{\Lip_w^1}(V):= \{ f\in \Lip_w^1(V)\ ;\ \| f\|_\infty \leq \diam(H) \}.
\]

\begin{rem}
The subset $\wt{\Lip_w^1}(V)$ is a compact subset of $(\RR^V, \la\cdot,\cdot \ra)$ since $\R^V$ is of finite dimensional.
\end{rem}

We can restrict the class of functions i.e. $\Lip_w^1(V)$ in the definition of $\KD_{\lambda}$ to the compact subset $\wt{\Lip_w^1}(V)$: 
  \begin{prop}\label{prop:finitenorm}
%
Let $\lambda>0$ and $x,y\in V$. Then the following identity holds:
   \begin{align}
    \KD_{\lambda}(x,y)=\sup\left\{\la J_{\lambda}f,\delta_x-\delta_y\ra\;;\;f\in\wt{\Lip_w^1}(V) \right\}.\notag
   \end{align} 
  \end{prop}
 \begin{proof}
Let $f\in \Lip_w^1(V)$. Take $y_0\in V$ such that $\la f,\delta_{y_0}\ra=\min_{z\in V}\la f,\delta_z\ra$ and we set $\theta:=\la f,\delta_{y_0}\ra$ and $F:=f-\theta\cdot \vol(V) \pi$. Then for any $x,y\in V$, the following identities hold: 
  \begin{align}
   \la F,\delta_x-\delta_y\ra=\frac{f(x)-\theta d_x}{d_x}-\frac{f(y)-\theta d_y}{d_y}=\la f,\delta_x-\delta_y\ra,\notag
  \end{align}
which implies $F\in \Lip_w^1(V)$. Thus by the identity $\la F, \delta_{y_0}\ra=0$, for any $x\in V$, the following estimates hold:
  \begin{align}
   \la F,\delta_x\ra=\la F,\delta_x\ra-\la F,\delta_{y_0}\ra\leq d(x,y_0)\leq \diam(H).\notag
  \end{align}
This means that $\Vert F\Vert_{\infty}\leq \diam(H)$.  By Lemma~\ref{lem:homogenity-of-resolvent}, the identity $J_{\lambda}F=J_{\lambda}f-\theta\cdot \vol(V)\pi$ holds. Thus for any $x,y\in V$, the identities hold 
  \begin{align}
   \frac{J_{\lambda}F(x)}{d_x}-\frac{J_{\lambda}F(y)}{d_y}=\frac{J_{\lambda}f(x)-\theta d_x}{d_x}-\frac{J_{\lambda}f(y)-\theta d_y}{d_y}=\frac{J_{\lambda}f(x)}{d_x}-\frac{J_{\lambda}f(y)}{d_y},\notag
  \end{align}
which implies the desired property. 



 \end{proof}

We prove finiteness of $\KD_\lambda(x,y)$ and an upper bound of $\KD_\lambda$. The following lemma implies existence of the lower coarse Ricci curvature (see Remark \ref{lowerboundlowercoarse}).
 \begin{lem}\label{lem:KDfinite}
Let $\lambda>0$ and $x,y\in V$. Then the following estimate hold:
   \begin{equation}
    \KD_{\lambda}(x,y)\leq 2\lambda \diam(H)\vol(V)^{1/2}\max_{z\in V}d_z^{-1/2}+d(x,y). \label{eq:upper-bound-of-KD} \qedhere 
    \notag 
   \end{equation}
Moreover, the following inequality holds:
\[
   \max_{x,y\in V}\KD_{\lambda}(x,y) \le (2\lambda \vol(V)^{1/2}\max_{z\in V}d_z^{-1/2}+1)\diam(H)<\infty.
\]
 \end{lem}
  \begin{proof}


We first show that for any $f\in \Lip_w^1(V)$, the following estimate holds:
\begin{equation}
\label{estimateLf}
\VVert \calL f\VVert\leq \diam(H)\vol(V)^{1/2}.
\end{equation} Let $e\in E$ and $\ttb_e\in\mathrm{argmax}_{\ttb\in B_e}\ttb^{\top}(D^{-1}f)$. Since $f\in \Lip_w^1(V)$, the following estimates hold:
\[
\ttb_e^{\top}(D^{-1}f)=\max_{x,y\in e}\left\vert f(x)/d_x-f(y)/d_y\right\vert
\le \max_{x,y\in e}d(x,y)\leq \diam(H).
\]
We set $f':=\sum_{e}w_e\ttb_e(\ttb_e^{\top}(D^{-1}f))\in\calL f$. 
We note that $\vert \ttb_e(x)\vert\leq 1$ for any $x\in V$ since $\ttb_e\in B_e$. Then  
\begin{align}
 \vert f'(x)\vert=\left\vert \sum_{e\in x}w_e\ttb_e^{\top}(D^{-1}f)\ttb_e(x)\right\vert\leq \diam(H)\sum_{e\in x}w_e = \diam(H)d_x.\label{eq:upperbound-of-L(f)}
\end{align}
Consequently, we obtain 
\begin{align}
 \VVert \calL f\VVert^2\leq \la f',f'\ra=\sum_{x\in V}f'(x)^2d_x^{-1}\leq \diam(H)^2\sum_{x\in V}d_x=\diam(H)^2\vol(V).\notag
\end{align}
   \smallskip
Next we go back to the proof. For any $f\in \Lip_w^1(V)$, by (\ref{eq:bound-by-operator-norm}), the estimates hold:
   \begin{align}
    &\langle J_{\lambda}f,\delta_x\rangle-\langle J_{\lambda}f,\delta_y\rangle=\langle J_{\lambda}f-f,\delta_x\rangle+\langle f,\delta_x\rangle-\langle f,\delta_y\rangle-\langle J_{\lambda}f-f,\delta_y\rangle\notag\\
    &\leq \Vert J_{\lambda}f-f\Vert(\Vert \delta_x\Vert+\Vert \delta_y\Vert)+d(x,y)\notag\\
    &\leq \lambda\VVert \calL f\VVert(d_x^{-1/2}+d_y^{-1/2})+d(x,y)\notag\\
    &\leq 2\lambda \diam(H)\vol(V)^{1/2}\max_{z\in V}d_z^{-1/2}+d(x,y).\notag
   \end{align}
Because the last quantity is independent of $f$, we take the supremum with respect to $f$ to get the conclusion of this lemma.
  \end{proof}

Next we prove that for any $\lambda>0$, $\KD_{\lambda}(\cdot,\cdot)$ is a distance function on $V$:

  \begin{prop}\label{prop:distance}
Let $\lambda>0$ and $x,y\in V$. Then the following holds: 
   \begin{enumerate}
    \item $\KD_{\lambda}(x,y)=0$ if and only if $x=y$.
    \item $\KD_{\lambda}(x,y)=\KD_{\lambda}(y,x)$.
    \item For $z\in V$, the triangle inequality $\KD_{\lambda}(x,z)\leq \KD_{\lambda}(x,y)+\KD_{\lambda}(y,z)$ holds.
   \end{enumerate}
  \end{prop}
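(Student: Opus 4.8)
The plan is to verify the three metric axioms for $\KD_\lambda$ by exploiting the defining supremum together with the basic properties of the resolvent $J_\lambda$, in particular its injectivity (Lemma~\ref{lem:conti-of-resolvent}), its homogeneity under scalar multiplication (Lemma~\ref{lem:homogenity-of-Laplacian}), and the shift invariance $J_\lambda(f-a\pi)=J_\lambda f-a\pi$ (Lemma~\ref{lem:Jvanish}). Throughout I will use the rewriting $\langle J_\lambda f,\delta_x-\delta_y\rangle = J_\lambda f(x)/d_x - J_\lambda f(y)/d_y$ coming from the inner product $\langle\cdot,\cdot\rangle$, so that $\KD_\lambda(x,y)$ is the supremum over $f\in\Lip_w^1(V)$ of $(D^{-1}J_\lambda f)(x)-(D^{-1}J_\lambda f)(y)$.

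For (2), symmetry is immediate: if $f\in\Lip_w^1(V)$ then $-f\in\Lip_w^1(V)$, and by $J_\lambda(-f)=-J_\lambda f$ we get $\langle J_\lambda(-f),\delta_y-\delta_x\rangle=\langle J_\lambda f,\delta_x-\delta_y\rangle$; taking suprema over $f$ on both sides swaps $x$ and $y$ and gives $\KD_\lambda(x,y)=\KD_\lambda(y,x)$. For (3), the triangle inequality, fix any $f\in\Lip_w^1(V)$ and write
\[
 \langle J_\lambda f,\delta_x-\delta_z\rangle = \langle J_\lambda f,\delta_x-\delta_y\rangle + \langle J_\lambda f,\delta_y-\delta_z\rangle \leq \KD_\lambda(x,y)+\KD_\lambda(y,z),
\]
where each of the two terms on the right is bounded by the corresponding $\KD_\lambda$ because the same $f$ is admissible in both defining suprema; now take the supremum over $f$ on the left.

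For (1), the direction $x=y\Rightarrow\KD_\lambda(x,y)=0$ is trivial since each summand $\langle J_\lambda f,\delta_x-\delta_x\rangle$ vanishes. The substantive direction is $\KD_\lambda(x,y)=0\Rightarrow x=y$, and I expect this to be the main obstacle. Suppose $x\neq y$; I want to produce a single admissible $f$ with $\langle J_\lambda f,\delta_x-\delta_y\rangle>0$. The natural candidate is the (weighted) distance function to $x$: set $f = D\,g$ where $g(z)=-d(x,z)$, or equivalently work directly with the requirement that $D^{-1}f$ be $1$-Lipschitz; then $\langle f,\delta_x-\delta_y\rangle = d(x,y)>0$. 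The issue is that $\KD_\lambda$ sees $J_\lambda f$, not $f$, so I need an argument that $J_\lambda f$ cannot collapse this positive gap. Here I would use the estimate $\|J_\lambda f - f\|\le\lambda\VVert\calL f\VVert$ from \eqref{eq:bound-by-operator-norm} together with the bound $\VVert\calL f\VVert\le\vol(V)^{1/2}$ established inside the proof of Lemma~\ref{lem:KDfinite}: this shows $\langle J_\lambda f,\delta_x-\delta_y\rangle \ge d(x,y) - 2\lambda\vol(V)^{1/2}M$, which is positive only for small $\lambda$. To remove the smallness restriction I would instead invoke the resolvent equation \eqref{eq:resolvent-equation}, or more robustly the scaling $J_\lambda(cf)=cJ_\lambda(f)$: replacing $f$ by $cf$ for large $c>0$, the function $cf$ is weighted $c$-Lipschitz rather than $1$-Lipschitz, so this does not directly help either. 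The cleanest route is probably to argue by contradiction using injectivity of $J_\lambda$: if $\KD_\lambda(x,y)=0$ then $\langle J_\lambda f,\delta_x\rangle\le\langle J_\lambda f,\delta_y\rangle$ for all $f\in\Lip_w^1(V)$, and applying this also to $-f$ forces $\langle J_\lambda f,\delta_x-\delta_y\rangle=0$, i.e. $(D^{-1}J_\lambda f)(x)=(D^{-1}J_\lambda f)(y)$ for every weighted $1$-Lipschitz $f$; by the homogeneity $J_\lambda(cf)=cJ_\lambda f$ this extends to all $f\in\Lip_w^K(V)$ for every $K$, hence to all $f\in\RR^V$, so $J_\lambda$ maps every $f$ into the hyperplane $\{h:(D^{-1}h)(x)=(D^{-1}h)(y)\}$; but $J_\lambda(\RR^V)=\RR^V$ by Lemma~\ref{lem:homogenity-of-Laplacian}(2), and that hyperplane is proper, a contradiction. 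I will present this last argument as the proof of (1), after disposing of (2) and (3) by the short computations above.
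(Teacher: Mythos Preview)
Your proposal is correct and follows essentially the same approach as the paper: parts (2) and (3) are handled by the same direct manipulations (the paper just says they ``follow from the definition'' and gives the $\epsilon$-version of your triangle inequality argument), and for the nontrivial direction of (1) the paper uses exactly your final argument---from $\KD_\lambda(x,y)=0$ deduce $\langle J_\lambda f,\delta_x-\delta_y\rangle=0$ for all $f\in\Lip_w^1(V)$, extend to all of $\RR^V$ via the homogeneity $J_\lambda(cf)=cJ_\lambda f$, and conclude from the surjectivity $J_\lambda(\RR^V)=\RR^V$ (Lemma~\ref{lem:homogenity-of-Laplacian}) that $\delta_x=\delta_y$. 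Your explicit use of the $-f$ trick to pass from $\le 0$ to $=0$ is a point the paper leaves implicit.
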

  \begin{proof}
"If\ " part of (1) and (2) follow from the definition. 
We prove "only if\ " part of (1). 
We assume that $\KD_{\lambda}(x,y)=0$. 
Then for any $f\in \Lip_w^1(V)$, $\langle J_\lambda f, \delta_x - \delta_y\rangle = 0$. We can see that the identity 
$\{ c f ; f \in \Lip_w^1(V), c \in \RR \} = \RR^V 
$ holds. Indeed, let $h\in \R^V\backslash\{0\}$. Set $c:=2\|h\|_{\infty}>0$ and $h=cf$. Then for any $u,v\in V$ with $u\ne v$, noting that $d(u,v)\ge 1$, the following estimates hold:
\begin{align*}
   \left|\frac{f(u)}{d_u}-\frac{f(v)}{d_v}\right|=\frac{1}{c}\left|\frac{h(u)}{d_u}-\frac{h(v)}{d_v}\right|\le d(u,v).
\end{align*}
Thus by Lemma~\ref{lem:conti-of-resolvent}, the identity $\langle g, \delta_x - \delta_y\rangle = 0$ holds for any $g \in \RR^V$. 
The non-degeneracy of the inner product implies $\delta_x=\delta_y$, which means $x=y$. 
  Next we prove (3). For any $\epsilon>0$, there exists $f=f_{\varepsilon}\in \Lip_w^1(V)$ such that 
$\KD_{\lambda}(x,z)\leq \langle J_{\lambda}f,\delta_x\rangle-\langle J_{\lambda}f,\delta_z\rangle+\epsilon$.
Thus, we have 
   \begin{align}
    \KD_{\lambda}(x,z)&\leq 
    \langle J_{\lambda}f,\delta_x\rangle-\langle J_{\lambda}f,\delta_y\rangle+\langle J_{\lambda}f,\delta_y\rangle-\langle J_{\lambda}f,\delta_z\rangle
        +\epsilon\notag\\
    &\leq \KD_{\lambda}(x,y)+\KD_{\lambda}(y,z)+\epsilon.\notag
   \end{align}
Since $\epsilon>0$ is any positive number, the conclusion holds. 
  \end{proof}
 
Next we study how the function $\KD_{\lambda}$ changes with respect to $\lambda$. We can prove the following Lipshitz continuity: 
\begin{prop}
Let $\lambda,\mu>0$ Then the following estimate holds:
\begin{equation}
\label{LipshitzKD}
\sup_{x,y\in V}|\KD_{\lambda}(x,y)-\KD_{\mu}(x,y)|\leq 2\diam(H)\vol(V)^{1/2}\max_{z\in V}d_z^{-1/2}\vert \lambda-\mu\vert.
\end{equation}
\end{prop}
\begin{proof}
Let $x,y\in V$. Let $g\in \Lip_w^1(V)$. By (2) and (3) of Lemma \ref{lem:conti-of-resolvent} and (\ref{estimateLf}), the following estimates hold: 
\begin{align}
 &\la J_{\lambda}g-J_{\mu}g,\delta_x-\delta_y\ra=\la J_{\mu}\left(\frac{\mu}{\lambda}g+\frac{\lambda-\mu}{\lambda}J_{\lambda}g\right)-J_{\mu}g,\delta_x-\delta_y\ra\notag\\
 &\leq \left\Vert J_{\mu}\left(\frac{\mu}{\lambda}g+\frac{\lambda-\mu}{\lambda}J_{\lambda}g\right)-J_{\mu}g\right\Vert\cdot\Vert \delta_x-\delta_y\Vert\notag\\
 &\leq \left\Vert \frac{\mu}{\lambda}g+\frac{\lambda-\mu}{\lambda}J_{\lambda}g-g\right\Vert\cdot 2\max_{z\in V}d_z^{-1/2}\notag\\
 &\leq 2\max_{z\in V}d_z^{-1/2}\vert \lambda-\mu\vert \Vert \calL^0f\Vert \leq 2\diam(H)\vol(V)^{1/2}\max_{z\in V}d_z^{-1/2}
\vert \lambda-\mu\vert.\notag
\end{align}
Let $\epsilon>0$. Then there exists $f\in \Lip_w^1(V)$ such that $\KD_{\lambda}(x,y)-\epsilon\leq \la J_{\lambda}f,\delta_x-\delta_y\ra$. Thus by the above estimates, the following inequalities hold:
\begin{align}
 \KD_{\lambda}(x,y)-\epsilon&\leq \la J_{\lambda}f,\delta_x-\delta_y\ra=\la J_{\lambda}f-J_{\mu}f,\delta_x-\delta_y\ra+\la J_{\mu}f,\delta_x-\delta_y\ra\notag\\
 &\leq 2\diam(H)\vol(V)^{1/2}\max_{z\in V}d_z^{-1/2}\vert \lambda-\mu\vert+\KD_{\mu}(x,y).\notag
\end{align}
Since $\epsilon>0$ is arbitrary, we obtain $\KD_{\lambda}(x,y)-\KD_{\mu}(x,y)\leq \text{RHS. of (\ref{LipshitzKD})}$. Changing the role of $\mu$ and $\lambda$, we have the conclusion.  
  \end{proof}

Next we prove that for $\lambda>0$ and $x,y\in V$, there exists a function in $\Lip_w^1(V)$ which 
attains $\KD_\lambda(x,y)$. We call such function a \emph{$\lambda$-nonlinear Kantorovich potential}.
  \begin{prop}\label{prop:nKP}
Let $\lambda>0$ and $x,y\in V$. Then there exists $f\in \Lip_w^1(V)$ such that the identity $\la J_{\lambda}f,\delta_x\ra-\la J_{\lambda}f,\delta_y\ra=\KD_{\lambda}(x,y)$ holds. Namely the following identity holds:
\[
    \KD_{\lambda}(x,y)=\max\left\{\langle J_{\lambda}f,\delta_x\rangle-\langle J_{\lambda}f,\delta_y\rangle\;;\; f\in\Lip_w^1(V)\right\}.
\]
  \end{prop}
 \begin{proof}
  Let $\{f_n\}\subset \Lip_w^1(V)$ be a maximizing sequence of 
 $\KD_{\lambda}(x,y)$. As mentioned in Proposition \ref{prop:finitenorm}, without loss of generality, we may assume $\sup_n\Vert f_n\Vert_{\infty}\leq \di(H)$. Since $\wt{\Lip_w^1}(V)$ is a compact subset of the finite dimensional Euclidean space 
 $(\RR^V, \la\cdot,\cdot\ra)$ by Proposition~\ref{prop:finitenorm}, 
 thus a sequentially compact subset. 
Hence $\{f_n\}$ has a subsequence 
$\{f_{n_j}\}_j$ which converges to an element $f'$ in $\Lip_w^1(V)$. 
Since $J_{\lambda}$ is continuous by Lemma~\ref{lem:conti-of-resolvent}, 
we have $J_{\lambda}(f_{n_j})\rightarrow J_{\lambda}(f')$ as $j\rightarrow\infty$, which implies 
 \begin{align}
  \KD_{\lambda}(x,y)=\lim_{j\rightarrow \infty}\la J_{\lambda}(f_{n_j}),\delta_x-\delta_y\ra=\la J_{\lambda}(f'),\delta_x-\delta_y\ra.\notag
 \end{align}
 \end{proof}
%

\begin{cor}
Let $\lambda>0$. Then the following estimate holds:
\[
  \sup_{x,y\in V}|\KD_{\lambda}(x,y)-d(x,y)|\leq 2\lambda \diam(H)\vol(V)^{1/2}\max_{z\in V}d_z^{-1/2}.
\]
\end{cor}

\begin{proof}
Let $x,y\in V$. By Proposition \ref{prop:nKP}, there exists $f\in \Lip_w^1(V)$ with $\langle f,\delta_x-\delta_y\rangle\leq d(x,y)$ which attains $\KD_{\lambda}(x,y)$. In the similar manner as above, the following estimates hold
\begin{align}
    &\KD_{\lambda}(x,y)\geq\langle J_{\lambda}f-f,\delta_x\rangle+\langle f,\delta_x\rangle-\langle f,\delta_y\rangle-\langle J_{\lambda}f-f,\delta_y\rangle\notag\\
    &\geq -\lambda\VVert \calL f\VVert(d_x^{-1/2}+d_y^{-1/2})+d(x,y)\notag\\
    &\geq -2\diam(H)\lambda \vol(V)^{1/2}\max_{z\in V}d_z^{-1/2}+d(x,y).\notag
   \end{align}
By combining this and Lemma \ref{lem:KDfinite}, the conclusion holds.
\end{proof}

\subsection{Coarse Ricci curvature on hypergraphs} 
\label{subsec:def-of-coarse-ricci-curvature-hypergraph}
Let $H=(V,E,w)$ be a weighted undirected hypergraph and $x,y\in V$. In this subsection, we introduce a coarse Ricci curvature on $H$ along with $x,y$, denoted by $\kappa(x,y)$, and show its fundamental properties.
\begin{defn}[Coarse Ricci curvature on hypergraphs]\label{def:lambda-coarse-Ricci-curvature}
Let $\lambda>0$, $x$ and $y$ be two distinct vertices and $\KD_{\lambda}(x,y)$ be the $\lambda$-nonlinear Kantorovich difference defined in Definition \ref{def:NKD}. Then the {\em $\lambda$-coarse Ricci curvature} along with $x, y$, denoted by $\kappa_{\lambda}(x,y)$, is defined by
 \begin{align}
 \label{lambda-coarse}
  \kappa_{\lambda}(x,y):=1-\frac{\KD_{\lambda}(x,y)}{d(x,y)}.
 \end{align}
The {\em lower coarse Ricci curvature} $\underline\kappa(x,y)$ and the {\em upper coarse Ricci curvature} $\overline\kappa(x,y)$ are defined respectively by
 \begin{align}  \underline\kappa(x,y):=\liminf_{\lambda\downarrow0}\frac{\kappa_{\lambda}(x,y)}{\lambda}\ \ \ \text{and}\ \ \ \ \overline\kappa(x,y):=\limsup_{\lambda\downarrow0}\frac{\kappa_{\lambda}(x,y)}{\lambda}.\label{ourcurvature}
 \end{align}
If the identity $\ul{\kappa}(x,y) = \ol{\kappa}(x,y)$ holds, then we call this value {\em the coarse Ricci curvature} for $x, y$, denoted by $\kappa(x,y)$. 
 \end{defn}
As shown in Section \ref{sec:general-settings}, we can extend the notion of the upper and lower coarse Ricci curvatures to the setting of submodular transformations \cite[Definition 3.1]{yoshida2019cheeger}.

 \begin{rem}
 \label{lowerboundlowercoarse}
For any $x,y\in V$ with $x\ne y$, the lower coarse Ricci curvature $\ul{\kappa}(x,y)$ exists. More precisely by Lemma \ref{lem:KDfinite} and $d(x,y)\ge 1$, the following estimates hold:
\begin{align*}
\underline\kappa(x,y)&\geq 
-2\diam(H)\vol(V)^{1/2}\max_{z\in V}d_z^{-1/2}>-\infty.
\end{align*}
This implies that $\inf_{x,y}\underline\kappa(x,y)\ge -2\diam(H)\vol(V)^{1/2}\max_{z\in V}d_z^{-1/2}$. 
%
 \end{rem}  
It is not trivial whether the upper coarse Ricci curvature $\ol{\kappa}(x,y)$ is finite or not. However we can prove the following upper estimates.
  \begin{lem}\label{lem:upperfinite}
Let $x$ and $y$ be two distinct vertices. Then for any $f\in \Lip_w^1(V)$ with $f(x)/d_x-f(y)/d_y=d(x,y)$, the following estimate holds:
\begin{align}
     \overline{\kappa}(x,y)&\leq d(x,y)^{-1}\la \calL^0f,\delta_x-\delta_y\ra\label{eq:upper-bound-of-olkappa}.
    \end{align}
where $\calL^0$ is the canonical restriction of $\mathcal{L}$ (see (\ref{eq:L0-and-limit-of-resolvent})). Moreover, the following holds:
\begin{equation}
\label{upperboundofbark}
     \max_{x,y\in V}\overline{\kappa}(x,y)\le 2\diam(H)\vol(V)^{1/2}\max _{z\in V}d_z^{-1/2}<\infty.
\end{equation}
  \end{lem}
   \begin{proof}
Let $\lambda>0$. Then the following estimates hold:
    \begin{align}
     \KD_{\lambda}(x,y)&\geq \la J_{\lambda}f,\delta_x\ra-\la J_{\lambda}f,\delta_y\ra\notag\\
     &=\la J_{\lambda}f-f,\delta_x\ra+d(x,y)-\la J_{\lambda}f-f,\delta_y\ra.\notag
    \end{align}
This implies that the following inequality holds:
    \begin{align}
     \lambda^{-1}\kappa_{\lambda}(x,y)\leq d(x,y)^{-1}\la\lambda^{-1}(J_{\lambda}f-f),\delta_y\ra-\la\lambda^{-1}(J_{\lambda}f-f),\delta_x\ra.\notag
    \end{align}
By taking the superior limit as $\lambda\rightarrow +0$ and using~\eqref{eq:L0-and-limit-of-resolvent}, we have (\ref{eq:upper-bound-of-olkappa}). (\ref{upperboundofbark}) follows from (\ref{eq:upper-bound-of-olkappa}), (\ref{estimateLf}) and $d(x,y)\ge 1$.
   \end{proof}

The following main result means that for any finite connected hypergraphs, the lower and upper coarse Ricci curvatures coincide. 

\begin{thm}[Existence of the coarse Ricci curvature on hypergraphs]
\label{thm:existence-of-limit}
The identity 
 $\ul{\kappa}(x,y)=\ol{\kappa}(x,y)$ holds for any two distinct vertices $x$ and $y$.
\end{thm}
We give a proof of this theorem in Section \ref{sec:existenceRicci} (see Theorem \ref{thm:existenceofRicci}) via linear programming. For convenience of the reader we give a proof in the case of usual graphs as Proposition \ref{prop:campare-LLY} in a more straightforward way than the case for hypergraphs. And we emphasize that only for finite hypergraphs and usual graphs, we can prove the coincidence between the upper and lower Ricci curvature. More general cases, even for infinite hypergraphs, we don't know the coincidence between them.

Next we show a relation between the minimum of the coarse Ricci curvature for any pairs of vertices and that for adjacent vertices. We set $\kappa:=\min_{x,y}\kappa(x,y)=\min_{x\ne y}\kappa(x,y)$.
  \begin{lem}
The identity ${\displaystyle 
\kappa=\min_{x\sim y}\kappa(x,y)}$ holds.
  \end{lem}
  \begin{proof}
It suffices to prove $\kappa \ge \min_{x\sim y}\kappa(x,y)$. Let $x,y\in V$ with $x\ne y$ and set $n:=d(x,y)\ge 1$. Take $\{x_i\}$ be a shortest path connecting $x$ and 
  $y$. Then for any $\lambda>0$, by Proposition~\ref{prop:distance} ~(3) the following inequality holds:  
   \begin{align}
    \kappa_{\lambda}(x,y)&
    \geq 1-\frac{\sum_{i=0}^{n-1}\KD_{\lambda}(x_i,x_{i+1})}{n}
    =\frac{1}{n}\sum_{i=0}^{n-1}\left( 1-\frac{\KD_{\lambda}(x_i,x_{i+1})}{d(x_i,x_{i+1})}\right),\notag
   \end{align}
which implies that the estimates hold:
\[
    \kappa(x,y)\geq \frac{1}{n}\sum_{i=0}^{n-1}\kappa(x_i,x_{i+1})\geq \min_{x\sim y}\kappa(x,y).\qedhere
\]
  \end{proof}

We show another property of the minimum of the coarse Ricci curvatures. We set $\kappa_{\lambda}:=\min_{x,y}\kappa_{\lambda}(x,y)$ for $\lambda>0$.  
  \begin{lem}
  \label{propertyinferior}
The identity ${\displaystyle \liminf_{\lambda\downarrow0 }\kappa_{\lambda}/\lambda=\kappa}$ holds.
  \end{lem}
\begin{proof}
Since $V$ is a finite set, so is $V\times V$. 
We can take $(x_{\lambda},y_{\lambda})\in V\times V$ with $x_{\lambda}\ne y_{\lambda}$ such that $\kappa_{\lambda}(x_{\lambda},y_{\lambda})=\kappa_{\lambda}$. 
We can show that there is a distinct pair $(x_\infty, y_\infty)\in V\times V$ such that 
\[
 \liminf_{\lambda\downarrow 0}\frac{\kappa_{\lambda}(x_\infty,y_\infty)}{\lambda}  
    =\liminf_{\lambda \downarrow 0}\frac{\kappa_{\lambda}}{\lambda}.
\]
Take $(x_0,y_0)\in V\times V$ such that $\kappa= \kappa(x_0, y_0)$. 
Then
by taking the limit inf $\lambda \to 0$, we have 
   \begin{align}
    \kappa&\leq \kappa(x_\infty,y_\infty) = \liminf_{\lambda\downarrow 0}\frac{\kappa_{\lambda}(x_\infty,y_\infty)}{\lambda} 
    =\liminf_{\lambda \downarrow 0}\frac{\kappa_{\lambda}}{\lambda}\notag\\
    &\leq\liminf_{\lambda \downarrow 0}\frac{\kappa_{\lambda}(x_0,y_0)}{\lambda}
    =\kappa.\notag
   \end{align}
   This concludes the proof. \qedhere

  \end{proof}

 %
 %
 \section{Connection of Lin-Lu-Yau's coarse Ricci curvature with ours}\label{sec:comparison-on-graph} 
 
The following proposition says that our coarse Ricci curvature gives a generalization of Lin-Lu-Yau's one \cite{lin2011ricci} on graphs to hypergraphs.
 


 
  \begin{prop}\label{prop:campare-LLY}
Assume that $H=(V,E,w)$ is a weighted undirected graph. Let $x,y\in V$ be two distinct vertices.
Then the identity $\kappa(x,y)=\kappa^{\mathrm{LLY}}(x,y)$ holds, 
  where $\kappa^{\mathrm{LLY}}(x,y)$ is defined by (\ref{eq:LLYdef}).
  \end{prop}

  \begin{proof}
Let $\lambda:= 1 - \alpha\in (0,1)$ and $x,y\in V$ be two distinct vertices. We recall the definitions of the Lin-Lu-Yau's coarse Ricci curvature $\kappa^{\mathrm{LLY}}(x,y)$ (\ref{eq:LLYdef}), the $\alpha$-lazy one $\kappa^{\alpha}(x,y)$ (\ref{alpha-lazy}), our Ricci curvature $\kappa(x,y)$ (\ref{ourcurvature}) and the $\lambda$-coarse one $\kappa_{\lambda}(x,y)$ (\ref{lambda-coarse}). We evaluate the difference of $\kappa^{\alpha}(x,y)$ and $\kappa_{\lambda}(x,y)$.
Since the equation 
   \begin{align}
    \lambda^{-1}\left\vert \kappa^{\alpha}(x,y)-\kappa_{\lambda}(x,y)\right\vert=
\lambda^{-1}d(x,y)^{-1}\vert W_1(m_x^{\alpha},m_y^{\alpha})-\KD_\lambda(x,y) \vert
    \label{eq:difference-of-curvatures}
   \end{align}
   holds, it suffices to evaluate 
$\lambda^{-1}\vert W_1(m_x^{\alpha},m_y^{\alpha})-\KD_\lambda(x,y)\vert$. 
    There exist some potentials to the both $W_1(m_x^{\alpha},m_y^{\alpha})$ (see \cite{Villani:2009oldnew}) and $\KD_{\lambda}(x,y)$ (see Proposition~\ref{prop:nKP}). 
    Let $f^{\alpha}$ be a Kantorovich potential for 
    $(m_x^{\alpha},m_y^{\alpha})$. Noting that $f^{\alpha}$ is $1$-Lipschitz, i.e. $Df\in \Lip_w^1(V)$, by (\ref{KR-duality-graph}), we obtain 
   \begin{align}
    &W_1(m_x^{\alpha},m_y^{\alpha})-\KD_{\lambda}(x,y)\notag\\
    &\leq 
    \left\{\left(I-\lambda \calL\right)f^{\alpha}-J_{\lambda}f^{\alpha}\right\}(x)-\left\{\left(I-\lambda \calL\right)f^{\alpha}-J_{\lambda}f^{\alpha}\right\}(y).\notag
   \end{align} 
Since $J_{\lambda}=(I+\lambda\calL)^{-1}$, if $\lambda$ is sufficiently small, then the Neumann series 
   expansion holds:
   \begin{align}
    J_{\lambda}f=(I-\lambda \mathcal{L})f+\sum_{i=2}^{\infty}(-\lambda\calL)^if.\notag
   \end{align}
   Hence as $\lambda\rightarrow 0$, we have 
   \begin{align}
    &\lambda^{-1}(W_1(m_x^{\alpha},m_y^{\alpha})-\KD_{\lambda}(x,y))\notag\\
    &\leq \sum_{i=2}^{\infty}(-\lambda)^{i-1}\calL^i f^{\alpha}(x)-\sum_{i=2}^{\infty}(-\lambda)^{i-1}\calL^i f^{\alpha}(y)
    \rightarrow 0,\notag
   \end{align}
which implies $\lim_{\lambda\downarrow 0}
	\lambda^{-1}(W_1(m_x^{\alpha},m_y^{\alpha})-\KD_{\lambda}(x,y)) \leq 0$. By exchanging the role of $\KD_{\lambda}$ and $W_1$, we obtain 
   the similar result $\lim_{\lambda\downarrow 0}
	\lambda^{-1}(\KD_{\lambda}(x,y)-W_1(m_x^{\alpha},m_y^{\alpha})) \leq 0$. Consequently, we have 
\begin{align}
    \lim_{\lambda\downarrow 0}\lambda^{-1}\left\vert W_1(m_x^{\alpha},m_y^{\alpha})-\KD_{\lambda}(x,y)\right\vert=0.\label{eq:limit-W1-KD-linear}
    \end{align}
Since the limit $\lim_{\alpha \uparrow 1}\kappa^\alpha(x,y)/(1-\alpha)$
exists by \cite[P.609]{lin2011ricci}, so does the limit $\lim_{\lambda\downarrow 0}\kappa_{\lambda}(x,y)/\lambda$. 
By combining (\ref{eq:difference-of-curvatures}) and (\ref{eq:limit-W1-KD-linear}), we have $\kappa^{\mathrm{LLY}}(x,y)=\kappa(x,y)$.  
  \end{proof}
  \begin{rem}
The argument of the proof of Proposition \ref{prop:campare-LLY} is applicable for other situations. 
Indeed we can show that the Ricci curvature on directed graphs defined by  
  Sakurai et.al \cite[Definition 3.6]{ozawa2020geometric} is same as a 
  modification of our Ricci curvature on directed graphs.
More precisely, since the Laplacian $\Delta$ \cite[Definition 3.6]{ozawa2020geometric} is self-adjoint and non-positive definite operator (\cite[Proposition.~2.4]{ozawa2020geometric}) and the measure appears in their definition 
can be calculated as 
   \begin{align}
    \int_Vf\,d\nu^{\epsilon}_x=(I+\epsilon\Delta)f(x)\notag
   \end{align}
\cite[Lemma~3.1]{ozawa2020geometric}, we can accomplish the 
similar proof as Proposition~\ref{prop:campare-LLY}. 
  \end{rem}

\section{Connections of our Ricci curvature with analytic or geometric properties} 
\label{sec:applications}

\subsection{Eigenvalue of the submodular hypergraph Laplacian} 
\label{subsec:eigenvalues}
We call $\mu \in \R_{>0}$ an eigenvalue of $\calL$ if 
there exists $f \in \R^V$ satisfying $\calL^0f=\mu f$. We can prove that the eigenvalue is bounded by the minimum of the coarse Ricci curvature from below.  

  \begin{thm}\label{thm:eigenvalue}
  Let $\mu$ be an eigenvalue of $\calL$. Then the estimate $\kappa\leq \mu$ holds.  
  \end{thm}
  \begin{proof}
Since $\mu$ is an eigenvalue of $\mathcal{L}$, there exists $f\in \R^V$ such that $\calL^0f=\mu f$. By multiplying some constant if necessary and Lemma \ref{lem:homogenity-of-Laplacian}, we may assume $f\in \Lip_w^1(V)$. Moreover, without loss of generality, we may assume that 
$f(x)/d_x-f(y)/d_y=d(x,y)$ holds for some $x,y \in V$. By Lemma \ref{lem:upperfinite} and Theorem \ref{thm:existence-of-limit}, the estimates hold:
   \begin{align*}
    \kappa&\le \kappa(x,y)
    \le d(x,y)^{-1}\left(\langle \calL^0f,\delta_x\rangle-\langle \calL^0f,\delta_y\rangle\right)\\
    &=\frac{\mu}{d(x,y)}\left(\frac{f(x)}{d_x}-\frac{f(y)}{d_y}\right)=\mu.\notag
   \end{align*}
  \end{proof}
 \begin{rem}
  The same conclusion of Theorem \ref{thm:eigenvalue} is proven if $\inf_{x\sim y}\ul{\kappa}(x,y)\leq \kappa$ for suitable settings(infinite hypergraphs, submodular transformation etc.). 
 \end{rem}

\subsection{Gradient estimate of the heat flow}
\label{subsec:gradient-estimate}
Next we prove a relation between a lower bound of our Ricci curvature and a gradient estimate of the heat flow. 
  \begin{thm}
\label{gradient estimate}
Let $\kappa_0\in \R$. Assume that the inequality $\kappa\ge \kappa_0$ holds. Then any $x,y\in V$, $f\in \Lip_w^1(V)$ and $t>0$, the following inequality holds:  
   \begin{align}
    \frac{e^{-t\mathcal{L}} f(x)}{d_x}-\frac{e^{-t\mathcal{L}}f(y)}{d_y}\leq e^{-\kappa_0 t}d(x,y).\notag
   \end{align}
  \end{thm}
 \begin{proof}
Let $\lambda>0$. The definitions of $\KD_\lambda$ and $\kappa_\lambda$ give 
  \begin{align}
   \frac{J_{\lambda}f(x)}{d_x}-\frac{J_{\lambda}f(y)}{d_y}
   \leq \KD_{\lambda}(x,y)\leq (1-\kappa_{\lambda})d(x,y),\notag
  \end{align}
which implies that $(1-\kappa_{\lambda})^{-1}J_{\lambda}f\in \Lip_w^1(V)$. In the similar calculation with (1) of Lemma \ref{lem:homogenity-of-resolvent}, we have   
  \begin{align}
   &\frac{J_{\lambda}^2f(x)}{d_x}-\frac{J_{\lambda}^2f(y)}{d_y}
   =(1-\kappa_{\lambda})\left(\la J_{\lambda}\left(\frac{J_{\lambda}f}{1-\kappa_{\lambda}}\right),\delta_x\ra-\la J_{\lambda}\left(\frac{J_{\lambda}f}{1-\kappa_{\lambda}}\right),\delta_y\ra\right)\notag\\
   &\leq (1-\kappa_{\lambda})^2d(x,y).\notag
  \end{align}
Repeating the similar calculation implies that 
$\la J_{\lambda}^nf,\delta_x\ra-\la J_{\lambda}^nf,\delta_y\ra\leq (1-\kappa_{\lambda})^nd(x,y)$ holds for any $n\in \mathbb{N}\cup \{0\}$. 
Therefore, by \eqref{eq:heat-and-resolvent} and Lemma \ref{propertyinferior}, we have 
  \begin{align}
   \frac{e^{-t\mathcal{L}}f(x)}{d_x}-\frac{e^{-t\mathcal{L}}f(y)}{d_y}&=\lim_{\lambda\downarrow0}\la J_{\lambda}^{[t/\lambda]}f,\delta_x\ra-\la J_{\lambda}^{[t/\lambda]}f,\delta_y\ra\notag\\
   &\leq \liminf_{\lambda\downarrow0}(1-\kappa_{\lambda})^{[t/\lambda]}d(x,y)\notag\\
   &\leq\liminf_{\lambda\downarrow0}e^{-\frac{\kappa_{\lambda}}{\lambda}[t/\lambda]\lambda}d(x,y)\notag\\
   &=e^{-\kappa t}d(x,y)\le e^{-\kappa_0 t}d(x,y).\notag 
  \end{align}
Here the second inequality follows from the inequality 
 $(1+x)^t\le e^{xt}$ for any $\vert x\vert<1$ and $t>0$. 
%
 \end{proof}
  \begin{rem}
   The same conclusion of Theorem \ref{gradient estimate} is proven if $\inf_{x\sim y}\ol{\kappa}(x,y)\leq \kappa$ for suitable settings(infinite hypergraphs, submodular transformation etc.).
  \end{rem}
 
\subsection{Bonnet-Myers diameter bound under positive Ricci curvature} 
\label{subsec:diameter-bound}
We prove a geometric consequence (Bonnet-Myers diameter bound) under the Ricci curvature being positive. The following gives a generalization of \cite[Theorem 4.1]{lin2011ricci} and \cite[Proposition 23]{ollivier2009ricci} to the case of hypergraphs.
  \begin{thm}[Bonnet-Myers diameter bound]
  \label{Bonnet-Myers}
Assume that $\kappa\ge \kappa_0>0$ holds. Then the following holds:
   \begin{align}
   \label{Bonnet-Myersineq}
    \diam(H) \leq 2\kappa_0^{-1}.
   \end{align}
  \end{thm}
  \begin{proof}
Let $x,y\in V$ be two distinct vertices which satisfy $d(x,y)=\di(H)$. By Lemma \ref{lem:upperfinite} and Theorem \ref{thm:existence-of-limit}, there exists $f\in\Lip_w^1(V)$ with $\la f,\delta_x-\delta_y\ra=d(x,y)$ such that $\kappa(x,y)\le d(x,y)^{-1}\la\calL^0f,\delta_x-\delta_y\ra$. By (\ref{eq:upperbound-of-L(f)}), the estimate $\vert \calL^0f(z)\vert\leq d_z$ holds for any $z\in V$. By the assumption, the following inequalities hold:
   \begin{align}
    0<\kappa_0&\le \kappa\leq \kappa(x,y)\le d(x,y)^{-1}\la\calL^0f,\delta_x-\delta_y\ra\notag\\
    &\le 2d(x,y)^{-1}=2\di(H)^{-1}.\notag
   \end{align}
  \end{proof}
  \begin{rem}
   The same conclusion of Theorem \ref{Bonnet-Myers} is proven if $\inf_{x\sim y}\ol{\kappa}(x,y)\leq \kappa$ for suitable settings(submodular transformation etc.).
  \end{rem}
The second author et.al \cite[Theorem 1.1]{Kitabeppu_Matsumoto} proved Cheng's maximal diameter theorem, which means that if the equality of (\ref{Bonnet-Myersineq}) holds, then there exists a pair of vertices $x,y$ with $d(x,y)=\diam(H)$ such that all points on a geodesic from $x$ to $y$. 

 %
 %
 \section{Examples}
 \label{sec:examples} 
In this section, we calculate the values of our curvature for several hypergraphs. Let $H=(V,E,w)$ be a weighted undirected hypergraph. The key formula for calculations is Moreau's theorem (\ref{Moreau's theorem}).
 
\begin{example}\label{ex:three-vertices} 
We consider the case where  
 $V:=\{x,y,z\}$, $E:=\{xy,yz,zx,xyz\}$, and $w(e):=1$ for 
 any $e \in E$. 
 We calculate the coarse Ricci curvature $\kappa(x,y)$

First we calculate the $\lambda$-nonlinear Kantrovich difference $\KD_{\lambda}(x,y)$ for a sufficiently small $\lambda > 0$. 
Let $f\in \Lip_w^1(V)$. 
We set the values $f(x) =: 3\alpha, f(y) =: 3\beta$, and $f(z) =: 3\gamma$. In the similar argument as the proof of Proposition \ref{prop:finitenorm}, we may assume that $\beta=0$. We divide our argument into the four cases:\ (1)\ $\alpha > \gamma > 0$,\ (2)\ $\gamma > \alpha > 0$,\ (3)\ $\gamma = 0$,\ (4)\ $\gamma = \alpha$,\ (5)\ $\alpha>0>\gamma$. 

We set $g:=J_{\lambda}f$. Moreover we divide the cases for the values of $g$. We remark that $\alpha,\gamma\leq 1$ holds since $f\in \Lip_w^1(V)$. Since $g=J_{\lambda}f\rightarrow f$ as $\lambda\rightarrow 0$ due to (\ref{eq:bound-by-operator-norm}), we write $g(x)=3\alpha+3a$, $g(y)=3b$, $g(z)=3\gamma+3c$, where $|a|,|b|$ and $|c|$ are sufficiently small. We define $F:\R^V\rightarrow \R$ as
\begin{align}
 F(g):=\frac{1}{2\lambda}\Vert f-g\Vert^2+Q(D^{-1}g).\notag
\end{align} 
\noindent
\underline{(1) $\alpha>\gamma>0$.} 
Since $g=J_{\lambda}f$ is closed to $f$, we may assume $\alpha+a>\gamma+c>b$. 
Then the normalized Laplacian $\ca{L}$ of $g$ is uniquely determined and the following hold: 
 \begin{align}
  &\sfb^{\top}_{xy}(D^{-1}g)=\alpha+a-b,\;\sfb^{\top}_{xz}(D^{-1}g)=\alpha+a-\gamma-c,\;\sfb^{\top}_{yz}(D^{-1}g)=\gamma+c-b,\notag\\
  &\sfb_{xyz}^{\top}(D^{-1}g)=\alpha+a-b.\notag
 \end{align}
Hence, we have 
 \begin{align}
  F(g)=\frac{1}{2\lambda}\left(3a^2+3b^2+3c^2\right)+\frac{1}{2}\left(2(\alpha+a-b)^2+(\gamma+c-b)^2+(\alpha+a-\gamma-c)^2\right).\notag
 \end{align}
Let $r:=\lambda^{-1}>0$ be sufficiently large. Since $J_{\lambda}f$ is a critical point for $F$, $\partial_aF=\partial_bF=\partial_cF=0$, which is equivalent to 
 \begin{align}
  \begin{pmatrix}
   3(1+r)&-2&-1\\
   -2&3(1+r)&-1\\
   -1&-1&2+3r
  \end{pmatrix}
  \begin{pmatrix}
  a\\ b\\ c
  \end{pmatrix}=
  \begin{pmatrix}
   -3\alpha+\gamma\\ 2\alpha+\gamma\\ \alpha-2\gamma r
  \end{pmatrix}.\notag
 \end{align} 
 This can be solved and we see that 
 $(a,b,c )^\top$ is equal to 
 \begin{align}
  &\frac{1}{9r(1+r)(3r+5)}\cdot \notag\\ 
 & \begin{pmatrix}
   3(1+r)(2+3r)-1&2(2+3r)+1&2+3(1+r)\\
   2(2+3r)+1&3(1+r)(2+3r)-1&3(1+r)+2\\
   2+3(1+r)&3(1+r)+2&9(1+r)^2-4
  \end{pmatrix}
  \begin{pmatrix}
   -3\alpha+\gamma\\ 2\alpha+\gamma\\ \alpha-2\gamma r
  \end{pmatrix}.\notag
 \end{align}
Since the inner product $\la J_{\lambda}f,\delta_x-\delta_y\ra$ is represented as $\alpha+a-b$, we have  
 \begin{align}
  &\la J_{\lambda}f,\delta_x-\delta_y\ra
  =\alpha+a-b
  =\frac{3\alpha r}{3r+5}\leq \frac{3r}{3r+5}.\notag
 \end{align}
Here the last inequality follows from $\alpha\le 1$ and the equality is attained when $\alpha=1$.\\
 
\noindent \underline{(2) $\gamma>\alpha>0$.}
By a similar argument as above, we may assume $\gamma+c>\alpha+a>b$. 
Then the following holds: 
 \begin{align}
  &\sfb^{\top}_{xy}(D^{-1}g)=\alpha+a-b,\;\sfb^{\top}_{xz}(D^{-1}g)=\gamma+c-\alpha-a,\;\sfb^{\top}_{yz}(D^{-1}g)=\gamma+c-b,\notag\\
  &\sfb^{\top}_{xyz}(D^{-1}g)=\gamma+c-b,\notag
 \end{align}
this implies 
 \begin{align}
  F(g)=\frac{1}{2\lambda}(3a^2+3b^2+3c^2)+\frac{1}{2}\left((\alpha+a-b)^2+2(\gamma+c-b)^2+(\gamma+c-\alpha-a)^2\right).\notag
 \end{align}
 From $\partial_aF=\partial_bF=\partial_cF=0$, we obtain 
 \begin{align}
  \begin{pmatrix}
   3r+2&-1&-1\\
   -1&3(1+r)&-2\\
   -1&-2&3(1+r)
  \end{pmatrix}
  \begin{pmatrix}
   a\\ b\\ c
  \end{pmatrix}=
  \begin{pmatrix}
   -2\alpha +\gamma\\\alpha+2\gamma\\ \alpha-3\gamma 
  \end{pmatrix}.\notag
 \end{align}
This  equation can also be solved and we see that $(a,b,c)^\top$ is equal to 
 \begin{align}
 & \frac{1}{9r(1+r)(3r+5)}\cdot \notag\\
 & \begin{pmatrix}
   9(1+r)^2-4&3(1+r)+2&3(1+r)+2\\
   3(1+r)+2&3(1+r)(2+3r)-1&2(2+3r)+1\\
   3(1+r)+2&2(2+3r)+1&3(1+r)(2+3r)-1
  \end{pmatrix} \begin{pmatrix}
   -2\alpha+\gamma\\ \alpha+2\gamma\\ \alpha-3\gamma 
  \end{pmatrix}.\notag
 \end{align}
Then, we have 
 \begin{align*}
  \la J_{\lambda}f,\delta_x-\delta_y\ra &=\alpha+a-b
  =\frac{r\left(\alpha(3r+5)-\gamma\right)}{(1+r)(3r+5)}.\notag
 \end{align*}
 
\noindent 
\underline{(3) $\gamma=0$.} By the symmetry of $H$ and $f$, we have $b=c$, which implies 
 \begin{align}
  &\sfb^{\top}_{xy}(D^{-1}g)=\alpha+a-b,\;\sfb^{\top}_{xz}(D^{-1}g)=\alpha+a-b,\;\sfb^{\top}_{yz}(D^{-1}g)=0,\notag\\
  &\sfb^{\top}_{xyz}(D^{-1}g)=\alpha+a-b.\notag
 \end{align}
 Thus we have 
 \begin{align}
  F(g)=\frac{1}{2\lambda}\left(3a^2+6b^2\right)+\frac{3}{2}(\alpha+a-b)^2.\notag
 \end{align}
 The identities $\partial_aF=\partial_bF=0$ give
 \begin{align}
  \begin{pmatrix}
   1+r&-1\\
   -1&1+2r
  \end{pmatrix}
  \begin{pmatrix}
  a\\b
  \end{pmatrix}=
  \begin{pmatrix}
   -\alpha \\
   \alpha
  \end{pmatrix}
  \Longleftrightarrow 
  \begin{pmatrix}
   a\\b
  \end{pmatrix}
  =\frac{\alpha}{2r+3}
  \begin{pmatrix}
   -2\\
   1
  \end{pmatrix}.
  \notag
 \end{align}
Hence, we have 
 \begin{align}
  \la J_{\lambda}f,\delta_x-\delta_y\ra=\alpha+a-b=\frac{2\alpha r}{2r+3}\leq \frac{2r}{2r+3}.\notag
 \end{align}
 The last inequality follows from $\alpha\le 1$ and the identity is attained when $\alpha=1$. 
 
\noindent
\underline{(4) $\gamma=\alpha$. }
We have $a=c$ similarly as the above, which implies
 \begin{align}
  &\sfb^{\top}_{xy}(D^{-1}g)=\alpha+a-b,\;\sfb^{\top}_{xz}(D^{-1}g)=0,\;\sfb^{\top}_{yz}(D^{-1}g)=\alpha+a-b,\notag\\
  &\sfb^{\top}_{xyz}(D^{-1}g)=\alpha+a-b.\notag
 \end{align}
Thus we have 
 \begin{align}
  F(g)=\frac{1}{2\lambda}(6a^2+3b^2)+\frac{3}{2}(\alpha+a-b)^2.\notag
 \end{align} 
The 
equations $\partial_aF=\partial_bF=0$ can be written as  
 \begin{align}
  \begin{pmatrix}
   1+2r&-1\\
   -1&1+r
  \end{pmatrix}
  \begin{pmatrix}
   a\\b
  \end{pmatrix}=
  \begin{pmatrix}
   -\alpha \\ \alpha
  \end{pmatrix}
  \Longleftrightarrow \begin{pmatrix}
   a\\b
  \end{pmatrix}=\frac{\alpha}{2r+3}
  \begin{pmatrix}
   -1\\2
  \end{pmatrix}.
 \notag
 \end{align}
%
Consequently, we have 
 \begin{align}
  \la J_{\lambda}f,\delta_x-\delta_y\ra=\alpha+a-b=\frac{2\alpha r}{2r+3}\leq \frac{2r}{2r+3}.\notag
 \end{align}
 The last inequality follows from $\alpha\leq 1$ and the identity is attained when $\alpha=1$.
 
\noindent 
\underline{(5) $\alpha>0>\gamma$.} Since $f\in \Lip_w^1(V)$, $\alpha-\gamma\leq 1$ and $\alpha<1$. Then the following holds:
\begin{align}
  &\sfb^{\top}_{xy}(D^{-1}g)=\alpha+a-b,\;\sfb^{\top}_{xz}(D^{-1}g)=\alpha+a-\gamma-c,\;\sfb^{\top}_{yz}(D^{-1}g)=b-\gamma-c,\notag\\
  &\sfb^{\top}_{xyz}(D^{-1}g)=\alpha+a-\gamma-c,\notag
 \end{align}
which implies 
 \begin{align}
  F(g)=\frac{1}{2\lambda}\left(3a^2+3b^2+3c^2\right)+\frac{1}{2}\left((\alpha+a-b)^2+2(\alpha+a-\gamma-c)^2+(b-\gamma-c)^2\right).\notag
 \end{align}
 In the same manner as before, we obtain 
 \begin{align}
  \begin{pmatrix}
   3(1+r)&-1&-2\\
   -1&3r+2&-1\\
   -2&-1&3(1+r)
  \end{pmatrix}
  \begin{pmatrix}
   a\\b\\c
  \end{pmatrix}=
  \begin{pmatrix}
   -3\alpha+2\gamma\\
   \alpha+\gamma\\
   2\alpha-3\gamma
  \end{pmatrix},\notag
 \end{align}
whici implies that $(a \ b \ c)^\top$ is equal to 
 \begin{align}
  \frac{1}{9r(1+r)(3r+5)}
  \begin{pmatrix}
   3(1+r)(3r+2)-1&3(1+r)+2&2(2+3r)+1\\
   3(1+r)+2&9(1+r)^2-4&3(1+r)+2\\
   2(2+3r)+1&3(1+r)+2&3(1+r)(2+3r)-1
  \end{pmatrix}.\notag
 \end{align}
 Finally we have 
 \begin{align}
  \la J_{\lambda}f,\delta_x-\delta_y\ra=\alpha+a-b 
  =\frac{r}{(1+r)(3r+5)}\left\{\alpha(3r+4)+\gamma \right\}.\notag
 \end{align}

By comparing the values of $\la J_{\lambda}f,\delta_x-\delta_y\ra$ for the above all cases, we can show that 
the values for the case (1) are less than or equal to 
$2r/(2r+3)$, which is attained for the cases (3) and (4) with $\alpha=1$. Thus, it suffices to compare the cases (2), (3), and (5). 
We can calculate the differences as 
 \begin{align*}
  (3)-(2)
  &=\frac{2r}{2r+3}-\frac{r\left(\alpha(3r+5)-\gamma\right)}{(1+r)(3r+5)}
  \geq \frac{r\left(2(1-\alpha)r+2-3\alpha\right)}{(2r+3)(1+r)}\ge 0, 
  \text{ and } \notag \\
 (3)-(5)
  &=\frac{2r}{2r+3}-\frac{r}{(1+r)(3r+5)}\left\{\alpha(3r+4)+\gamma \right\}\notag\\
  &\geq \frac{2r}{2r+3}-\frac{\alpha r}{1+r}=\frac{r\{2(1-\alpha)r+2-3\alpha\}}{(2r+3)(1+r)}\geq 0.\notag
 \end{align*}
Here the most right hand sides are non-negative, since
$r=\lambda^{-1}$ is sufficiently large and $1\geq \gamma>\alpha$ in the case (2) and $\alpha<1$, $\gamma<0$ in the case (5). 
Thus, we have 
 \begin{align}
  \KD_{\lambda}(x,y)=\frac{2\lambda^{-1}}{2\lambda^{-1}+3}.\notag
 \end{align}
Consequently, the coarse Ricci curvature $\kappa(x,y)$ exists and becomes
 \begin{align}
  \kappa(x,y)=\lim_{\lambda\rightarrow +0}\frac{1}{\lambda}\left(1-\frac{\KD_{\lambda}(x,y)}{d(x,y)}\right)=\frac{3}{2}.\notag
 \end{align}
 \end{example}

 \begin{rem}
 We conjecture that if one consider the hypergraph $H =(V, E, w)$ such that  
 $|V| = n$, $E = 2^V\setminus \{\emptyset, V\} $ and 
 $w(e) = 1$ for any $e \in E$, then the $\lambda$-nonlinear Kantorovich potential $f$ satisfies that for $x\in V$, $f(x)=d_x$ and  $f(z)=0$ ($z\neq x$).  
 \end{rem}


We conjecture that the following formula holds, which enables us to easily calculate our curvatures. The similar formula was proved in the case of usual graphs \cite[Theorem 2.1]{MW:2019Oll}.
\begin{conj}
\label{characterization}
For any two distinct vertecices $x$ and $y$, the following holds:
  \[
\kappa(x,y)=\frac{\inf\left\{\la\calL^0f,\delta_x-\delta_y\ra\;;\;f\in\Lip_w^1(V),\;\la f,\delta_x-\delta_y\ra=d(x,y)\right\}}{d(x,y)}.
  \]
\end{conj}

\begin{example}
We consider the case where $V=\{x,y,z\}$, $E=\{ e = \{ x,y,z \} \}$, and $w_e=1$.
We consider $f:V\rightarrow\R$ such that $f(x)=1$, $f(y)=0$, $f(z)=0$. Then, we have $\calL^0f(x)=1$, $\calL^0f(y)=-1/2$, $\calL^0f(z)=-1/2$. Lemma \ref{lem:upperfinite} gives
   \begin{align}
    \kappa(x,y)\le \la \calL^0f,\delta_x-\delta_y\ra=1-(-1/2)=3/2. \notag
   \end{align}
Actually we can prove that $\kappa(x,y)=3/2$.  
\end{example}

\begin{example}[complete hypergraph]
We consider the case where $V=\{v_1,v_2,\ldots,v_n\}$, $E=2^V\setminus\{\{v_1\},\ldots,\{v_n\},\emptyset\}$, and $w_e=1$. 
Then we have $|E|=2^n-n-1$ and $d_x=2^{n-1}-1=:d$ for any $x \in V$. 
We count the number of hyperedges $e \in E$ including $v_1$ and $v_2$. 
The number of such $e$ satisfying $|e| = k$ is ${n-2 \choose k-2}$. 

Let $f:V\rightarrow \R$ be the function satisfying $f(v_1)=d$ and $f(v_i)=0$ $(i=2,\ldots,n)$. 
Then, we have $\calL^0f(v_1)=d$. Moreover, for $e$ including $v_1$ and $v_2$ such that $\#e = k$, 
we may choose $\delta_{v_1}-(k-1)^{-1}\sum_{i\geq 2,v_i\in e}\delta_{v_i}$, $v_1,v_2\in e$ as $\sfb_e$. Thus, we have 
   \begin{align}
    \calL^0f(v_2)&=-\sum_{k=2}^n\frac{1}{k-1}{n-2 \choose k-2}
    =-\frac{2^{n-1}-1}{n-1}.\notag
   \end{align}
Thus Lemma \ref{lem:upperfinite} gives 
   \begin{align}
    \kappa(v_1,v_2)&\leq \frac{1}{d}\left(\calL^0f(v_1)-\calL^0f(v_2)\right)
    =\frac{n}{n-1}.\notag
   \end{align}
From this observation, we conjecture that the curvature of the complete 
hypergraph $H$ with $|V|=n$ is $n/(n-1)$. 
This prediction agrees with calculation in Example~\ref{ex:three-vertices}. 
\end{example}

\begin{example}[Negatively curved hypergraph]
 We consider 9 points hypergraph, $V=\{w_1,w_2,x,y,z,u_1,u_2,u_3,u_4\}$, $E=\{e_0=\{x,y,z\},e_1=\{w_1,w_2,x\},e_2=\{y,u_1,u_2\},e_3=\{z,u_3,u_4\}\}$, and $w_{e_i}=1$ for $i=0,1,2,3$. Let $f:V\rightarrow \R$ be a function defined by $f(w_1)=f(w_2)=f(x)=2$, $f(y)=f(z)=0$, and $f(u_j)=-1$ for $j=1,2,3,4$. Then $f$ is a weighted 1-Lipschitz function. By using a calculation like above, we have $\calL^0f(x)-\calL^0f(y)=-1/2$. Thus Lemma \ref{lem:upperfinite} implies $\kappa(x,y)\leq -1/4$. 
\end{example}


\section{Existence of the coarse Ricci curvature on hypergraphs}\label{sec:existenceRicci}

The purpose of this section is to prove Theorem \ref{thm:existence-of-limit}, i.e. the following theorem.
\begin{thm} \label{thm:existenceofRicci}
The coarse Ricci curvature 
\begin{align}
  \kappa(x_0,y_0)=\lim_{\lambda\downarrow0}\frac{1}{\lambda}\left( 1-\frac{\KD_{\lambda}(x_0,y_0)}{d(x_0,y_0)}\right) \label{eqn:existenceofRicci}
\end{align}
along with $x_0, y_0 \in V$ on a hypergraph $H$ exists, where $\KD_{\lambda}(x_0,y_0)$ is the $\lambda$-nonlinear Kantorovich difference, given in Definition \ref{def:NKD}. 
\end{thm}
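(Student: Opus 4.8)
Write $v:=\delta_{x_0}-\delta_{y_0}$ and, for $\lambda>0$, $g(\lambda):=\KD_\lambda(x_0,y_0)=\sup\{\langle J_\lambda f,v\rangle\;;\;f\in\wt{\Lip_w^1}(V)\}$, the supremum being attained by Proposition~\ref{prop:nKP}. The plan is to reduce \eqref{eqn:existenceofRicci} to the statement that $g$ extends continuously to $\lambda=0$ with $g(0)=d(x_0,y_0)$ and admits a right derivative there; then $\kappa(x_0,y_0)$ exists and equals $-g'(0^+)/d(x_0,y_0)$. Continuity at $0$ with the claimed value is cheap: for $f\in\Lip^1_w(V)$ one has $\|J_\lambda f-f\|\le\lambda\VVert\calL f\VVert\le\lambda\,\vol(V)^{1/2}$ by \eqref{eq:bound-by-operator-norm} and the estimate in the proof of Lemma~\ref{lem:KDfinite}, so $\langle J_\lambda f,v\rangle=\langle f,v\rangle+O(\lambda)$ uniformly in $f$, and the supremum of $\langle f,v\rangle$ over weighted $1$-Lipschitz $f$ equals $d(x_0,y_0)$ (attained at $f$ with $f(z)/d_z=-d(z,x_0)$); hence $g(\lambda)=d(x_0,y_0)+O(\lambda)$. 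Moreover $g$ is Lipschitz on $[0,\infty)$ by the estimate $|\KD_\lambda(x,y)-\KD_\mu(x,y)|\le 2M\vol(V)^{1/2}|\lambda-\mu|$ proved in the Remark after Proposition~\ref{prop:distance}, so $(g(\lambda)-g(0))/\lambda$ is bounded and only oscillation has to be excluded.

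The cleanest way I would carry this out is to show that $g$ is a \emph{semialgebraic} function of $\lambda$ on $(0,\infty)$. By the variational characterization $J_\lambda f=\argmin\{(2\lambda)^{-1}\|f-g\|^2+Q(D^{-1}g)\;;\;g\in\RR^V\}$ and the fact that $Q(g)=\tfrac12\sum_e w_e\max_{x,y\in e}(g(x)-g(y))^2$ is semialgebraic (a finite sum of maxima of finitely many polynomials), the function $(f,g,\lambda)\mapsto(2\lambda)^{-1}\|f-g\|^2+Q(D^{-1}g)$ is semialgebraic on $\RR^V\times\RR^V\times(0,\infty)$; since its minimizer in $g$ is unique ($=J_\lambda f$), the graph of $(f,\lambda)\mapsto J_\lambda f$ is cut out by a first-order formula and is therefore semialgebraic, and so is $(f,\lambda)\mapsto\langle J_\lambda f,v\rangle$. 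As $\wt{\Lip_w^1}(V)$ is polyhedral, $g(\lambda)=\sup_{f\in\wt{\Lip_w^1}(V)}\langle J_\lambda f,v\rangle$ is, by quantifier elimination, a semialgebraic function of the single variable $\lambda$. By the structure theory of semialgebraic functions of one real variable, $g$ is $C^1$ on some interval $(0,\delta)$ and has a Puiseux expansion $g(\lambda)=g(0)+c\lambda^{q}+o(\lambda^{q})$ as $\lambda\downarrow 0$ (or $g$ is eventually constant); Lipschitzness forces $q\ge 1$, so $g'(0^+)$ exists (it is $0$ if $q>1$ and $c$ if $q=1$). This already yields Theorem~\ref{thm:existenceofRicci}.

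More concretely --- and this is presumably the longer route taken in the appendix --- one exploits that $\calL$ is \emph{piecewise linear}: for each admissible selection $\sigma=(\sfb_e)_e$ of vertices $\sfb_e=\delta_{x_e}-\delta_{y_e}\in\argmax_{\sfb\in B_e}\sfb^\top D^{-1}f$ the operator $M_\sigma:=\sum_e w_e\sfb_e(\sfb_e^\top D^{-1}\cdot)$ is a fixed self-adjoint positive semidefinite operator, and on the interior of the polytope $P_\sigma:=\{g\in\wt{\Lip_w^1}(V)\;;\;\sigma\text{ admissible at }g\}$ one has $\calL\equiv M_\sigma$; then, for $\lambda$ small, $h:=(I+\lambda M_\sigma)^{-1}f$ satisfies $f\in(I+\lambda\calL)(h)$, so $J_\lambda f=(I+\lambda M_\sigma)^{-1}f$ by injectivity of $J_\lambda$ (Lemma~\ref{lem:conti-of-resolvent}), a rational expression in $\lambda$, analytic at $0$, with derivative $-\langle\calL^0 f,v\rangle$ there; linear optimization over the finitely many vertices of each $P_\sigma$ then presents $g$, near $0$, as a finite maximum of functions analytic at $0$, which has a right derivative. \textbf{The main obstacle} in this second route is precisely the non-uniformity of the phrase ``for $\lambda$ small'': the range of $\lambda$ for which the linear formula $J_\lambda f=(I+\lambda M_\sigma)^{-1}f$ is valid degenerates to $0$ as $f$ approaches the relative boundary of $P_\sigma$ --- where the argmaxima tie and $\calL$ is genuinely multivalued --- and a priori the supremum defining $g(\lambda)$ could be attained near such boundaries. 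Resolving this requires either a stratification of $\wt{\Lip_w^1}(V)$ by combinatorial type together with a compactness/continuity argument controlling $J_\lambda$ near each stratum, or, as above, the appeal to semialgebraicity that subsumes it. I note that either way the argument gives \emph{existence} of the limit but not its value; the conjectural identification with $C(x_0,y_0)$ in the Remark at the end of Section~\ref{sec:examples} is a separate question.
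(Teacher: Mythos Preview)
Your semialgebraic argument is correct and is a genuinely different route from the paper's. The paper does not invoke Tarski--Seidenberg or o-minimality at all; instead it carries out, explicitly and by hand, what your second paragraph calls ``presumably the longer route'', but with a twist that circumvents the non-uniformity obstacle you identify.

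Concretely, the paper partitions $\RR^V$ into the finitely many order-type cells $U_\rho$ (sets on which the coordinates of $D^{-1}f$ have a fixed pattern of equalities and strict inequalities). On each $U_\rho$ it shows that the \emph{forward} map $G_\lambda:=D(I+\lambda\calL D^{-1})$ is piecewise linear with inverse a single symmetric matrix $N_{\rho,\lambda}$ whose entries are rational in $\lambda$, and that the image $\overline{G_\lambda(U_\rho)}$ is a polyhedral cone generated by vectors $v_i(\lambda)$ that are again rational in $\lambda$. The key device is then \emph{linear programming over the ordered field $K(z)$} of rational functions (ordered by the sign near $z=0^+$): for each $\rho$ one writes down a linear program $\mathrm{LP}(z)$ whose feasible region encodes the Lipschitz constraints and the cone $\overline{G_z(U_\rho)}$, and whose objective is $\langle g,D N_{\rho,z}(\delta_{x_0}-\delta_{y_0})\rangle$. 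Results of Jeroslow on parametric LP over ordered fields give an optimal value $h^{(\rho)}(z)\in K(z)$ that specializes to the optimal value of $\mathrm{LP}(\lambda)$ for all sufficiently small $\lambda>0$. Taking the maximum over the finitely many $\rho$ yields $\KD_\lambda(x_0,y_0)=h^*(\lambda)$ with $h^*\in K(z)$ a \emph{rational} function near $0$, so the limit exists.

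What each approach buys: the paper's route is more elementary in the sense that it avoids the model-theoretic machinery and stays within concrete polyhedral combinatorics and parametric LP; it also yields the sharper conclusion that $\KD_\lambda(x_0,y_0)$ is rational in $\lambda$ near $0$, not merely Puiseux. Your semialgebraic argument is shorter, conceptually cleaner, and robust (it would work verbatim for any $Q$ that is semialgebraic, not just piecewise quadratic), but it gives only a Puiseux expansion and imports Tarski--Seidenberg as a black box. Your diagnosis of the obstacle in the ``direct'' piecewise-linear route is accurate; the paper sidesteps it precisely by switching to the inverse picture (parametrize by $g$ in the image cone rather than by $f$ in the order cell) and by doing the optimization once over $K(z)$ rather than pointwise in $\lambda$.
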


Let us consider a {\it generalized} hypergraph $H=(V,E,w,d)$ consisting of a finite set $V$, a set $E$ of nonempty subsets of $V$, a function $w : E \to \mathbb{R}_{>0}$, and a function $d : V \to \mathbb{R}_{>0}$. 
For a while, the condition $d_x = \sum_{e \ni x} w_e$ is {\it not} imposed.  
For simplicity, we set $|V|:=n$ and $V:=\{1,2,\dots,n\}$.
The vector space $\mathbb{R}^n$ of real valued functions on $V$ can be expressed as the disjoint union
\[
\mathbb{R}^n = \bigsqcup_{\rho \in R_n} U_{\rho}, 
\]
where $R_n$, corresponding to the set of orderings, is defined by
\[
R_n=\{ \rho=(\sigma,\tau) \in \mathfrak{S}_n \times \{ 0, 1 \}^{n-1} \mid \sigma(i)<\sigma(i+1)\text{ if } \tau(i)=0 \}, 
\]
and for $\rho=(\sigma,\tau) \in R_n$, the component $U_{\rho}$ is defined by
\[
U_\rho=\left\{(f_1,\dots,f_n)^\top \in \mathbb{R}^n \middle| \begin{array}{l} f_{\sigma(i)} = f_{\sigma(i+1)} \text{ if } \tau(i)=0 \\ f_{\sigma(i)} < f_{\sigma(i+1)} \text{ if } \tau(i)=1 \end{array}
\right\}.
\]
Hence two vectors $f=(f_1,\dots,f_n)^\top, g=(g_1,\dots,g_n)^\top \in \mathbb{R}^n$ belong to the same component $U_{\rho}$ for some $\rho \in R_n$ 
if and only if the elements of $f$ and $g$ are in the same order, that is, 
$\mathrm{sgn}(f_x-f_y)=\mathrm{sgn}(g_x-g_y)$ for any $x,y \in V$ with $\mathrm{sgn} : \mathbb{R} \to \{-1,0,1\}$ given by
\[
\mathrm{sgn}(r)=
\left\{
\begin{array}{cc}
1 & (r>0), \\
0 & (r=0), \\
-1 & (r<0). \\
\end{array}
\right. 
\]
We notice that the dimension of $U_\rho$ is $1+\sum_{i=1}^{n-1} \tau(i)$ for $\rho=(\sigma,\tau) \in R_n$.
Let $K:=\mathbb{Q}(\{w_e\}_{e \in E},\{d_x\}_{x \in V}) \subset \mathbb{R}$ be the subfield of $\mathbb{R}$ generated by 
$\{w_e\}_{e \in E}$ and $\{d_x\}_{x \in V}$, and consider the field $K(z)$ of rational functions in $z$ with coefficients in $K$. 
Moreover, let $G=G_z=G_{H,z} : \mathbb{R}^n \to 2^{\mathbb{R}(z)^n}$ be the (multi-valued) function defined by
\begin{align} \label{eqn:multivaluedGfunction}
Gf=G_zf:=(D+z L)(f) \qquad (f \in \mathbb{R}^n), 
\end{align}
which defines $G_\lambda : \mathbb{R}^n \to 2^{\mathbb{R}^n}$ for any $\lambda>0$, 
where $D=\mathrm{diag}(d_1,\dots,d_n)$ and $L\colon \RR^n \to 2^{\RR^n}$ is the hypergraph Laplacian given in (\ref{eq:def-of-Laplacian}). 
\subsection{Piecewise linear inverse} \label{subsection:pli}
Let $H$ be a generalized hypergraph. 
We will show the following proposition. 
\begin{prop} \label{prop:piecewiselinear}
For any $\rho \in R_n$, there exists a symmetric matrix $N_{\rho,z} \in M_n(K(z))$ such that $N_{\rho,\lambda}$ has non-negative entries for any $\lambda>0$ 
and $(N_{\rho,\lambda} \circ G_{\lambda})(f)=f$ holds for any $f \in U_{\rho}$. 
\end{prop}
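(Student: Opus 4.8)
The plan is to show that on each chamber $U_\rho$ the set‑valued map $G_\lambda$ becomes, after lumping together the vertices that are forced to share a common value, a single‑valued invertible linear map whose inverse is a symmetric $M$‑matrix, and then to take $N_{\rho,\lambda}$ to be the pullback of that inverse. First I would isolate the combinatorial input. Fix $\rho\in R_n$ and $f\in U_\rho$. For $e\in E$ one has $\max_{\mathtt b\in B_e}\mathtt b^\top f=\max_{x\in e}f_x-\min_{x\in e}f_x$, the maximizing face of $B_e$ is $P_{e,\rho}:=\mathrm{Conv}\{\delta_x-\delta_y:x\in T_e,\ y\in \mathrm{Bot}_e\}$, where $T_e$ and $\mathrm{Bot}_e$ are the sets of vertices of $e$ attaining the maximum, resp.\ minimum, value of $f$ on $e$; all of $P_{e,\rho},T_e,\mathrm{Bot}_e$ depend only on $\rho$, and for every $\mathtt b_e\in P_{e,\rho}$ the scalar $\mathtt b_e^\top f=:\Delta_e(f)=(\delta_x-\delta_y)^\top f$ (any $x\in T_e$, $y\in \mathrm{Bot}_e$) is linear in $f$ and independent of the choice of $\mathtt b_e$. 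Hence, by the definition \eqref{eq:def-of-Laplacian} of $L$,
\[
 G_\lambda(f)=Df+\lambda\Bigl\{\textstyle\sum_{e\in E}w_e\,\Delta_e(f)\,\mathtt b_e:\mathtt b_e\in P_{e,\rho}\Bigr\}.
\]

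Next I would pass to a quotient. Let $\approx$ be the equivalence relation on $V$ generated by declaring $x\approx x'$ whenever $\{x,x'\}\subseteq T_e$ or $\{x,x'\}\subseteq \mathrm{Bot}_e$ for some $e\in E$. Since members of $T_e$ (resp.\ $\mathrm{Bot}_e$) share a common value of $f$ on $U_\rho$, every $\approx$‑class is contained in a level set of $\rho$, so $f$ is constant on classes; write $f=P^\top\bar f$ with $P\in\{0,1\}^{\bar n\times n}$ the class‑membership matrix ($\bar n$ the number of classes) and $\bar f\in\RR^{\bar n}$ the vector of class values. Moreover each $T_e$ and each $\mathrm{Bot}_e$ lies in a single class $c_e^+$, resp.\ $c_e^-$, so that $P\mathtt b_e=\delta_{c_e^+}-\delta_{c_e^-}$ for every $\mathtt b_e\in P_{e,\rho}$, \emph{independently of the choice of $\mathtt b_e$}, and $\Delta_e(f)=\bar f_{c_e^+}-\bar f_{c_e^-}$. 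Therefore, for any $w\in G_\lambda(f)$,
\[
 Pw=PDf+\lambda\textstyle\sum_{e\in E}w_e\,\Delta_e(f)\,(\delta_{c_e^+}-\delta_{c_e^-})=(\bar D+\lambda\bar L_\rho)\bar f,
\]
where $\bar D:=\mathrm{diag}\bigl(\sum_{v\in c}d_v\bigr)_c$ and $\bar L_\rho:=\sum_{e\in E}w_e(\delta_{c_e^+}-\delta_{c_e^-})(\delta_{c_e^+}-\delta_{c_e^-})^\top$ is precisely the weighted graph Laplacian of the multigraph on the $\approx$‑classes having, for each $e$, a $w_e$‑weighted edge between $c_e^+$ and $c_e^-$.

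I would then set $N_{\rho,z}:=P^\top(\bar D+z\bar L_\rho)^{-1}P$. This lies in $M_n(K(z))$: $\det(\bar D+z\bar L_\rho)$ is a polynomial in $z$ with coefficients in $K$ and is nonzero, since at $z=0$ it equals $\prod_c\sum_{v\in c}d_v$, so the entries of $(\bar D+z\bar L_\rho)^{-1}$ lie in $K(z)$ by Cramer's rule, while $P$ has entries in $\{0,1\}\subseteq K$; it is symmetric because $\bar D+z\bar L_\rho$ is. For $\lambda>0$ the matrix $\bar D+\lambda\bar L_\rho$ has positive diagonal, nonpositive off‑diagonal entries, and is positive definite ($\bar D$ positive definite, $\bar L_\rho$ positive semidefinite), hence is a nonsingular symmetric $M$‑matrix, so its inverse has nonnegative entries; since $P$ and $P^\top$ are nonnegative, so is $N_{\rho,\lambda}$. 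Finally, for $f\in U_\rho$ and any $w\in G_\lambda(f)$ the displayed identity gives $N_{\rho,\lambda}w=P^\top(\bar D+\lambda\bar L_\rho)^{-1}Pw=P^\top\bar f=f$; and $G_\lambda(f)$ is nonempty because the argmax over the compact polytope $B_e$ is attained, so $(N_{\rho,\lambda}\circ G_\lambda)(f)=\{f\}$, which is the assertion.

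I expect the one genuinely delicate point to be the chambers $U_\rho$ on which some hyperedge has tied extreme vertices, where $L$, and hence $G_\lambda$, is truly multivalued on $U_\rho$: one must check that the ambiguity of $G_\lambda(f)$ always lies along the ``tie directions'' $\delta_x-\delta_{x'}$ with $x\approx x'$ (this is exactly the content of $P\mathtt b_e=\delta_{c_e^+}-\delta_{c_e^-}$), and that collapsing to the quotient graph does not spoil the $M$‑matrix structure responsible for the nonnegativity of $N_{\rho,\lambda}$. On a total‑order chamber everything degenerates: $P=I$, $\bar L_\rho=L_\rho=\sum_{e}w_e(\delta_{t(e)}-\delta_{b(e)})(\delta_{t(e)}-\delta_{b(e)})^\top$ with $t(e),b(e)$ the top and bottom vertices of $e$, the restriction of $G_\lambda$ to $U_\rho$ is already the single‑valued linear map $D+\lambda L_\rho$, and $N_{\rho,\lambda}=(D+\lambda L_\rho)^{-1}$.
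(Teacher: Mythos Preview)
Your argument is correct and in fact somewhat cleaner than the paper's. Both proofs rest on the same two observations: (i) on a fixed chamber $U_\rho$ the ambiguity in $G_\lambda(f)$ disappears after collapsing vertices that are forced to share a common value, and (ii) the resulting single-valued linear map is a symmetric $Z$-matrix with positive row sums (equivalently, a positive definite $Z$-matrix), hence an $M$-matrix with entrywise nonnegative inverse. The difference lies in how the collapse is organised. The paper introduces the auxiliary notion of a \emph{generalized hypergraph} (where the degree function $d$ is decoupled from the edge weights) and proceeds by induction on $n$, contracting one level class of $\mathcal{I}_\rho$ at a time until it reaches either the trivial chamber or a total-order chamber, where Step~2 applies. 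You instead perform a single global contraction by the equivalence $\approx$ generated by the extremal sets $T_e,\mathrm{Bot}_e$, which lands directly on an honest weighted \emph{graph} on the quotient; the $M$-matrix argument is then immediate and no induction or generalized-hypergraph framework is needed. What the paper's route buys is uniformity of language (everything stays a hypergraph throughout), while yours buys brevity and makes explicit that $N_{\rho,z}=P^\top(\bar D+z\bar L_\rho)^{-1}P$ with $P$ a fixed $0$--$1$ matrix, which may be handy for the linear-programming analysis that follows in the appendix. Your closing remark correctly identifies the only subtle step, namely that $P\mathtt b_e$ is independent of the choice of $\mathtt b_e\in P_{e,\rho}$, and you have verified it; note also that your $\approx$ may be strictly finer than the paper's level partition $\mathcal{I}_\rho$, but this is harmless since all you use is that $f$ is constant on $\approx$-classes.
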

In order to prove the proposition, let us prepare the following notations. 
\begin{itemize}
\item For each $e \in E$, put $f_e^+:=\max_{x \in e} f_x$ and $f_e^-:=\min_{x \in e} f_x$ with $f=(f_1,\dots,f_n)^\top \in \mathbb{R}^n$. 
\item For each $\rho \in R_n$, let $V=\sqcup_{I \in \mathcal{I}_{\rho}} I$ be a unique decomposition  of $V=\{1,\dots,n\}$ such that 
each $f=(f_1,\dots,f_n)^\top \in U_{\rho}$ satisfies $f_x=f_y$ if and only if $x,y \in I$ for some $I \in \mathcal{I}_{\rho}$. 
The decomposition is independent of the choice of $f \in U_{\rho}$. 
\item For each $e \in E$ and $\rho \in R_n$, let $I_{e,\rho}^{\pm} \subset V$ be the set of points $x \in V$ with $f_x=f_e^{\pm}$ for $f\in U_{\rho}$, which is also independent of the choice of $f \in U_{\rho}$. 
In particular, we have $I_{e,\rho}^{\pm} \in \mathcal{I}_{\rho}$. Moreover, either $I_{e,\rho}^+ \cap I_{e,\rho}^-=\emptyset$ or $e \subset I_{e,\rho}^+ = I_{e,\rho}^-$ holds. 
\item For each $e \in E$ and $\rho \in R_n$, set 
\[
\mathcal{M}_{e,\rho}:= \text{Conv}(\{S_{xy} \;;\; x \in e \cap I_{e,\rho}^+, y \in e \cap I_{e,\rho}^-\}), 
\]
where $S_{xy} \in M_n(\mathbb{Q})$ is a symmetric matrix, given by
\[
S_{xy}= I_{xx}-I_{xy}-I_{yx}+I_{yy}=
\begin{blockarray}{cccccc}
& & x & & y & & \\[-8pt]
& & \smallsmile & & \smallsmile & & \\[-5pt]
\begin{block}{r(ccccc)}
&  &  &  &  &  &\\
x\ ) &   & 1 &  & -1 &  & \\
&  &  &  &  &  &  \\
y\ ) &  & -1 &   & 1 &  & \\
&  &  &   &  & & \\
\end{block}
\end{blockarray}
\]
with $I_{xy}=\left( \delta_{x}(i) \cdot \delta_y(j) \right)_{1 \le i,j \le n} \in M_n(\mathbb{Q})$
(here $S_{xy}=0$ if $x=y$). 
In particular, each element of $\mathcal{M}_{e,\rho}$ is symmetric. 
\end{itemize}
Now we notice that the hypergraph Laplacian $L\colon \RR^n \to 2^{\RR^n}$ defined in (\ref{eq:def-of-Laplacian}) is expressed as 
\begin{align} \label{eqn:Ldecomposition}
L(f)=\sum_{e\in E}\omega_e L_e(f) \quad \text{with} \quad L_e(f):=\left\{\ttb_e(\ttb_e^{\top}f)\;;\;\ttb_e\in\text{argmax}_{\ttb\in B_e}\ttb^{\top}f\right\}, 
\end{align}
where $B_e$ is given in (\ref{eq:base-polytope-hyper}). 
Here the sum of subsets $A, B \subset \mathbb{R}^n$ stands for the Minkowski sum: $A+B:=\{a+b \in \mathbb{R}^n \;;\; a \in A, b \in B\}$, 
and the multiplication of $A \subset \mathbb{R}^n$ by a scalar $c \in \mathbb{R}$ means $cA:=\{c a \in \mathbb{R}^n \;;\; a \in A \}$. 
Hence the restriction of $L_e$ on each component $U_{\rho}$ is calculated as 
\begin{align} \label{eqn:Leexpression}
L_e|_{U_{\rho}}(f)=(f_e^+-f_e^-) \text{Conv}(\{ \delta_x - \delta_y \;;\; x \in e \cap I_{e,\rho}^+, y \in e \cap I_{e,\rho}^- \})=\mathcal{M}_{e,\rho}f. 
\end{align}
If $I_{e,\rho}^+ = I_{e,\rho}^-$, then one has $e \subset I_{e,\rho}^+ = I_{e,\rho}^-$ and $L_e(f)=\mathcal{M}_{e,\rho}f=0$ for any $f \in U_{\rho}$. 

\begin{proof}[Proof of Proposition \ref{prop:piecewiselinear}] We divide the proof into three steps. 
\\
\underline{Step 1} Assume that $\rho \in R_n$ satisfies $\mathcal{I}_\rho=\{\{1,2,\dots,n\}\}$, which means that $f=(f_1,\dots,f_n)^{\top} \in U_\rho$ satisfies $f_x = f_y$ for any $x, y \in V$. 
In this case, one has $I_{e,\rho}^+ = I_{e,\rho}^-$ and thus $L_e|_{U_\rho}=\mathcal{M}_{e,\rho}=0$ for any $e \in E$. Therefore, $G_z|_{U_\rho}=D+z \sum_{e \in E} w_e \mathcal{M}_{e,\rho}=D$ 
is a single matrix and $N_{\rho,z}=D^{-1}$ is a symmetric matrix with $N_{\rho,\lambda}=D^{-1}$ having non-negative entries for any $\lambda>0$. 
\\
\underline{Step 2} Assume that $\rho \in R_n$ satisfies $\mathcal{I}_\rho=\{\{1\},\{2\},\dots,\{n\} \}$, which means that $f=(f_1,\dots,f_n)^{\top} \in U_\rho$ satisfies $f_x \neq f_y$ for any $x \neq y \in V$. 
In this case, one has $\# I_{e,\rho}^+ = \# I_{e,\rho}^-=1$ and thus $L_e|_{U_{\rho}}=\mathcal{M}_{e,\rho}$ is a single-valued function for any $e \in E$.  
Moreover, $G_z|_{U_\rho}=D+z\sum_{e \in E} w_e \mathcal{M}_{e,\rho}$ is a symmetric matrix such that $G_\lambda|_{U_\rho}$ has positive diagonal entries and non-negative off-diagonal entries for any $\lambda>0$, 
and it satisfies 
\[
\left(D+z \sum_{e \in E} w_e \mathcal{M}_{e,\rho}\right) \left( \begin{array}{c} 1 \\ \vdots \\1 \end{array} \right) = \left( \begin{array}{c} d_1 \\ \vdots \\ d_n \end{array} \right). 
\] 
Hence it is known (see e.g. \cite[Theorem 6.34]{zhan2013matrix}) that there exists an inverse matrix $N_{\rho,z}=(D+z\sum_{e \in E} w_e \mathcal{M}_{e,\rho})^{-1} \in M_n(K(z))$, which is symmetric, and 
$N_{\rho,\lambda}$ has non-negative entries for any $\lambda>0$. 
\\
\underline{Step 3} We prove the proposition by induction on $n$. The proposition for the case $n=1$ can be proved from Step 1 (or Step 2). 
Assume that the proposition holds for $n<m$, and consider the case $n=m$. If $\rho \in R_m$ satisfies $\mathcal{I}_\rho=\{\{1,2,\dots,m\}\}$ or $\mathcal{I}_\rho=\{\{1\},\{2\},\dots,\{m\} \}$, then
the proposition holds from Step 1 or Step 2. 
Otherwise, by exchanging the indices if necessary, one may assume that there exists $1< k<m$ such that $\{k,\dots,m \} \in \mathcal{I}_{\rho}$, that is, 
$f=(f_1,\dots,f_m)^{\top} \in U_\rho$ satisfies $f_x \neq f_y$ for any $1 \le x \le k-1$ and $k \le y \le m$ and $f_k=f_{k+1}=\cdots =f_m$. 
Now we consider contractions of a function $f=(f_1,\dots,f_m)^\top \in \mathbb{R}^m$, given by
\[
\widetilde{f}:=\left( \begin{array}{c} f_1 \\ \vdots \\ f_{k-1} \\ \sum_{x=k}^n f_x\end{array} \right)  \in \mathbb{R}^k, \qquad 
\widehat{f}:=\left( \begin{array}{c} f_1 \\ \vdots \\ f_{k-1} \\ f_k \end{array} \right) \in \mathbb{R}^k, 
\]
and also consider a contraction of a matrix $A=(a_{ij})_{1\le ij \le m} \in M_m(\mathbb{R})$, given by
\[
\widetilde{A}:=\left( \begin{array}{cccc} 
a_{11} & \cdots & a_{1 k-1} & \sum_{j=k}^m a_{1j} \\
\vdots & \ddots & \vdots & \vdots \\
a_{k-1 1} & \cdots & a_{k-1 k-1} & \sum_{j=k}^m a_{k-1j} \\
\sum_{i=k}^m a_{i 1} & \cdots & \sum_{i=k}^m a_{i k-1} & \sum_{i,j=k}^m a_{ij}
\end{array} \right) \in M_k(\mathbb{R}). 
\]
One may also consider contractions of a function in $K(z)^m$ or in $\mathbb{R}(z)^m$, and a matrix in $M_m(K(z))$ or in $M_m(\mathbb{R}(z))$ 
in the same manner. 
Note that if $f \in U_{\rho}$, that is, $f_k=\cdots=f_m$, then $\widetilde{(A f)}=\widetilde{A} \widehat{f}$. 
Let $\widetilde{\rho} \in R_k$ be an index given by the relation $U_{\widetilde{\rho}}=\{\widehat{f} \in \mathbb{R}^k \;;\; f \in U_{\rho}\} \subset \mathbb{R}^k$. 

Then it is seen that there exists a contraction $\widetilde{H}=(\widetilde{V},\widetilde{E},\widetilde{w},\widetilde{d})$ of the hypergraph $H$ with $\widetilde{V}=\{1,\dots,k\}$ such that 
$\widetilde{G_{H,z}(f)}=G_{\widetilde{H},z}(\widehat{f})$ holds for any $f \in U_{\rho}$. 
Indeed, let us prepare the following notations:
\[
\begin{array}{l}
\pi : V=\{1,\dots,m\} \to \widetilde{V}:=\{1,\dots,k\}, \quad \pi(x):=\left\{ \begin{array}{cc} x & (x<k) \\ k & (x \ge k), \end{array} \right. \\
\pi : E \to \widetilde{E}:=\{\widetilde{e} \;;\; e \in E\}, \quad \pi(e)=\widetilde{e}:=\{\pi(x) \;;\; x \in e \} \subset \widetilde{V}, \\
\widetilde{w} : \widetilde{E} \to \mathbb{R}_{>0}, \quad \widetilde{w}_{\widetilde{e}}:=\sum_{e \in \pi^{-1}(\widetilde{e})} w_e, \\
\widetilde{d} : \widetilde{V} \to \mathbb{R}_{>0}, \quad \widetilde{d}_{x}:=\left\{ \begin{array}{cc} d_x & (x<k) \\ \sum_{y=k}^m d_y & (x \ge k). \end{array} \right. \\
\end{array}
\]
Since each matrix $S_{xy}$ with $x<y$ satisfies 
\[
\widetilde{S_{xy}}=\left\{ \begin{array}{cc} 
S_{xy} & (x<y <k) \\
S_{xk} & (x<k \le y) \\
0 & (k \le x<y), 
\end{array}\right.
\]
$\widetilde{\mathcal{M}_{e,\rho}}:=\{\widetilde{M} \;;\; M \in \mathcal{M}_{e,\rho}\}$ satisfies $\widetilde{\mathcal{M}_{e,\rho}}=\mathcal{M}_{\widetilde{e},\widetilde{\rho}}$ for any $e \in E$. 
As $\widetilde{D}=\mathrm{diag}(\widetilde{d}_1,\dots, \widetilde{d}_k)$, we have 
\[
\widetilde{G_{H,z}(f)}=(\widetilde{D}+z\sum_{e \in E} w_e \widetilde{\mathcal{M}_{e,\rho}})(\widehat{f})
=(\widetilde{D}+z\sum_{\widetilde{e} \in \widetilde{E}} \widetilde{w}_{\widetilde{e}} \mathcal{M}_{\widetilde{e},\widetilde{\rho}})(\widehat{f})
=G_{\widetilde{H},z}(\widehat{f})
\]
for any $f \in U_{\rho}$. 

By our assumption of the induction, there exists a symmetric matrix $N_{\widetilde{\rho},z} =(c_{ij}) \in M_k(K(z))$ 
such that $N_{\widetilde{\rho},\lambda}$ has non-negative entries for any $\lambda>0$ 
and $(N_{\widetilde{\rho},\lambda} \circ G_{\widetilde{H},\lambda})(\widehat{f})=\widehat{f}$ holds for any $f \in U_{\rho}$. 
Since $\widetilde{G_{H,\lambda}(f)}=\widetilde{G_{H,\lambda}}(\widehat{f})=G_{\widetilde{H},\lambda}(\widehat{f})$, 
we have $(N_{\rho,\lambda} \circ G_{H,\lambda})(f)=f$ for any $f \in U_{\rho}$, where 
\[
N_{\rho,z}:=\left( \begin{array}{ccccc} 
c_{11} & \cdots & c_{1 k} & \cdots & c_{1k} \\
\vdots & \ddots & \vdots & & \vdots \\
c_{k 1} & \cdots & c_{k k} & \cdots & c_{kk} \\
\vdots & & \vdots & & \vdots \\
c_{k 1} & \cdots & c_{k k} & \cdots & c_{kk}
\end{array} \right) \in M_m(K(z))
\]
is also a symmetric matrix with $N_{\rho,\lambda}$ having non-negative entires for $\lambda>0$. The proposition is established. 
\end{proof}

\subsection{Linear programming}

In order to prove Theorem \ref{thm:existenceofRicci}, we prepare the following lemma. 

\begin{lem} \label{lem:convconeofimage}
Let $H$ be a generalized hypergraph. 
For any $\rho \in R_n$, there exist vectors $v_1(z),\dots,v_k(z) \in K(z)^n$ such that for any $\lambda>0$, the closure of the image $G_{\lambda}(U_{\rho})$ is expressed as
\[
\overline{G_{\lambda}(U_{\rho})}=\mathrm{ConvCone}_{\mathbb{R}}(\{v_1(\lambda),\dots,v_k(\lambda)\}):= \sum_{i=1}^k \mathbb{R}_{\ge 0} \cdot v_i(\lambda), 
\]
where $G_{\lambda}$ is given in (\ref{eqn:multivaluedGfunction}). 
\end{lem}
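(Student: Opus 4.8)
The plan is to exhibit $\overline{G_\lambda(U_\rho)}$ as the image, under a linear map whose coefficients lie in $K[z]$, of a single polyhedral cone that does \emph{not} depend on $\lambda$; pushing the generators of that cone through the map then yields the required $v_i(z)$.

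First I would rewrite $G_\lambda$ on $U_\rho$ using \eqref{eqn:Leexpression}: for $f \in U_\rho$ one has $L_e|_{U_\rho}(f) = \ell_e(f)\,P_e$, where $P_e := \text{Conv}(\{\delta_x - \delta_y \;;\; x \in e \cap I_{e,\rho}^+,\ y \in e \cap I_{e,\rho}^-\})$ is a fixed polytope generated by points with entries in $\{-1,0,1\}$, and $\ell_e$ is the fixed $\mathbb{Q}$-linear functional $f \mapsto f_{x_0}-f_{y_0}$ for any $x_0 \in I_{e,\rho}^+$, $y_0 \in I_{e,\rho}^-$, which on $U_\rho$ coincides with $f \mapsto f_e^+ - f_e^-$. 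Hence $G_\lambda(f) = Df + \lambda \sum_{e \in E} \omega_e\, \ell_e(f)\, P_e$. Writing a point of $\ell_e(f)P_e$ as $\sum_{(x,y)} u_{e,x,y}(\delta_x - \delta_y)$ with $u_{e,x,y} \ge 0$ and $\sum_{(x,y)} u_{e,x,y} = \ell_e(f)$ (the sum over pairs $(x,y) \in (e \cap I_{e,\rho}^+) \times (e \cap I_{e,\rho}^-)$), a point of $G_\lambda(U_\rho)$ is exactly $\Phi_\lambda(f,u)$, where
\[
 \Phi_z(f,u) := Df + z \sum_{e \in E} \sum_{(x,y)} \omega_e\, u_{e,x,y}\,(\delta_x - \delta_y),
\]
and $(f,u)$ runs over $C' := \{ (f,u) \;;\; f \in U_\rho,\ u \ge 0,\ \sum_{(x,y)} u_{e,x,y} = \ell_e(f) \ \text{for all } e \in E \}$. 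Thus $G_\lambda(U_\rho) = \Phi_\lambda(C')$; note that $\Phi_z$ is linear in $(f,u)$ with coefficients in $K[z]$ while $C'$ carries no $\lambda$-dependence.

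Next, let $C$ be defined exactly like $C'$ but with $\overline{U_\rho}$ in place of $U_\rho$. Then $C$ is a polyhedral cone cut out by finitely many homogeneous linear (in)equalities with coefficients in $\mathbb{Q}$ — the sign and equality conditions defining $\overline{U_\rho}$, the inequalities $u \ge 0$, and the equalities $\sum_{(x,y)} u_{e,x,y} = \ell_e(f)$ — so by the Minkowski--Weyl theorem $C = \mathrm{ConvCone}_{\mathbb{R}}(\{ w^{(1)}, \dots, w^{(k)} \})$ for some rational vectors $w^{(i)}$. I would then verify $\overline{C'} = C$: the inclusion $\overline{C'} \subseteq C$ is clear since $C$ is closed, and for the reverse, given $(f_0, u_0) \in C$, fix any $g \in U_\rho$ and any feasible $u(g)$ for it; then $(f_0 + t g,\ u_0 + t\, u(g)) \in C'$ for all sufficiently small $t > 0$ (the sign conditions survive because $f_0 \in \overline{U_\rho}$ and $g \in U_\rho$, and the remaining constraints are linear), and this tends to $(f_0, u_0)$ as $t \downarrow 0$. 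Finally, $\Phi_\lambda(C) = \mathrm{ConvCone}_{\mathbb{R}}(\{ \Phi_\lambda(w^{(1)}), \dots, \Phi_\lambda(w^{(k)}) \})$ is a finitely generated, hence closed, cone; combined with continuity of $\Phi_\lambda$ this forces the chain $\Phi_\lambda(C) = \Phi_\lambda(\overline{C'}) \subseteq \overline{\Phi_\lambda(C')} \subseteq \overline{\Phi_\lambda(C)} = \Phi_\lambda(C)$ to collapse to equalities. Therefore $\overline{G_\lambda(U_\rho)} = \overline{\Phi_\lambda(C')} = \mathrm{ConvCone}_{\mathbb{R}}(\{ v_1(\lambda), \dots, v_k(\lambda) \})$ with $v_i(z) := \Phi_z(w^{(i)}) \in K[z]^n \subseteq K(z)^n$, and these $v_i$ are chosen once and for all, independently of $\lambda$.

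The main obstacle is the first step: packaging the variation of $f$ over the region $U_\rho$ together with the multivaluedness of $L$ (the freedom in the choice of $\ttb_e$) inside a single $\lambda$-free rational polyhedral cone $C$, in such a way that all $\lambda$-dependence is carried by the linear map $\Phi_\lambda$. Once $C$ and $\Phi_z$ are in place, the passage to closures and the bookkeeping that keeps the generators rational are routine polyhedral geometry.
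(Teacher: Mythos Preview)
Your proof is correct. Both your argument and the paper's reduce to the same underlying observation—that $\overline{G_\lambda(U_\rho)}$ is the image of a $\lambda$-independent rational polyhedral cone under a linear map whose matrix has entries in $K[z]$—but the execution differs. The paper reparametrizes $f\in U_\rho$ by the successive gaps $h_j=g_j^f-g_{j+1}^f$ between its distinct values, so that $G_zf$ decomposes as $\sum_j h_j\{\eta_j+z\cdot(\text{a fixed polytope})\}$; it then takes the closure by relaxing the $h_j$ to the closed orthant and assembles the cone structure from elementary facts about Minkowski sums and scalar multiples of convex sets. You instead linearize the multivaluedness of $L$ by introducing auxiliary barycentric coordinates $u_{e,x,y}$, thereby lifting everything to a single rational polyhedral cone $C$ in a higher-dimensional space and invoking Minkowski--Weyl once; the closure step $\overline{C'}=C$ is handled by the explicit perturbation $(f_0+tg,\,u_0+tu(g))$, which is more careful than the paper's somewhat brisk assertion. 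Your route is more systematic and yields generators in $K[z]^n$ directly, while the paper's is more explicit about how the generators arise from the combinatorics of the hyperedges; neither has a real advantage for the intended application.
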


\begin{proof}
We use the notations in the proof of Proposition \ref{prop:piecewiselinear}. For each element $f=(f_1,\dots,f_n)^{\top} \in U_{\rho}$, let $g_1^f> \cdots >g_l^f$ be given by $\{f_1,\dots,f_n\}=\{g_1^f,\dots,g_l^f\}$ 
and put $h_i^f:=g_i^f-g_{i+1}^f$ for $1 \le i \le l-1$ and $h_l^f:=g_{l}^f$, which gives a one-to-one correspondence between the sets $U_{\rho}$ and 
$\{(h_1,\dots,h_l)^{\top}  \;;\; h_1>0, \dots,h_{l-1}>0,h_l \in \mathbb{R}\}$. 
Since $f_i=g_k^f=\sum_{j=k}^l h_j^f$ for some $1 \le k \le l$, (\ref{eqn:Ldecomposition}) and (\ref{eqn:Leexpression}) show that 
\begin{align*}
G_zf= &Df+z \sum_{e\in E}w_e (f_e^+-f_e^-) \text{Conv}(\{ \delta_x - \delta_y \;;\; x \in e \cap I_{e,\rho}^+, \; y \in e \cap I_{e,\rho}^- \}) \notag \\
=& \sum_{j=1}^l h_j^f \left\{ \eta_j + z \sum_{J_e \ni j} w_e \text{Conv}(\{ \delta_x - \delta_y \;;\; x \in e \cap I_{e,\rho}^+,\; y \in e \cap I_{e,\rho}^- \}) \right\} 
\end{align*}
for $f\in U_{\rho}$, where $J_e=\{k^+,k^++1,\dots,k^--1\} \subset \{1,\dots,l-1\}$ is given by $f_e^{\pm}=g_{k^{\pm}}^f$, and 
$\eta_j=(\eta_{j1},\dots, \eta_{jn})^{\top} \in \mathbb{Q}(d)^n$ is given by
\[
\eta_{ji}=\left\{\begin{array}{cc}
d_i & (f_i \ge g_j^f) \\
0 & (f_i < g_j^f). 
\end{array} \right. 
\]
Note that the definitions of $J_e$ and $\eta_j$ are independent of the choice of $f\in U_{\rho}$. 
Thus $\overline{G_{\lambda}(U_{\rho})} \subset \mathbb{R}^n$ is the set of points 
\begin{align}
\sum_{j=1}^l h_j \left\{ \eta_j + \lambda \sum_{J_e \ni j} w_e \text{Conv}(\{ \delta_x - \delta_y \;;\; x \in e \cap I_{e,\rho}^+,\; y \in e \cap I_{e,\rho}^- \}) \right\}, \label{eqn:convconeexpression}
\end{align}
where $(h_1,\dots,h_l)^{\top}$ varies over $\{h_1\ge0, \dots,h_{l-1}\ge 0,h_l \in \mathbb{R}\}$. 
Moreover we also notice that 
\begin{itemize}
\item the multiplication of a convex set $\mathrm{Conv}(\{u_i\})$ by a scalar $c \in \mathbb{R}$ is also a convex set: 
$c \cdot \mathrm{Conv}(\{u_i\})=\mathrm{Conv}(\{c \cdot u_i\})$, 
\item the Minkowski sum of convex sets $\mathrm{Conv}(\{u_i\})$ and $\mathrm{Conv}(\{v_j\})$ is also a convex set:  
$\mathrm{Conv}(\{u_i\})+\mathrm{Conv}(\{v_j\})=\mathrm{Conv}(\{u_i+v_j\})$, 
\item the multiplication of a convex set $\mathrm{Conv}(\{u_i\})$ by the (non-negative) real numbers $\mathbb{R}$ ($\mathbb{R}_{\ge0}$) is a convex cone:  
$\mathbb{R} \mathrm{Conv}(\{u_i\})=\mathrm{ConvCone}(\{u_i\} \cup \{-u_i\})$ $\left( \mathbb{R}_{\ge0} \mathrm{Conv}(\{u_i\})=\mathrm{ConvCone}(\{u_i\})\right)$, and 
\item the Minkowski sum of convex cones $\mathrm{ConvCone}(\{u_i\})$ and $\mathrm{ConvCone}(\{v_j\})$ is a convex cone:  
$\mathrm{ConvCone}(\{u_i\})+\mathrm{ConvCone}(\{v_j\})=\mathrm{Conv}(\{u_i\} \cup \{v_j\})$. 
\end{itemize}
Hence the proposition follows from the expression (\ref{eqn:convconeexpression}). 
\end{proof}

From now on, we assume that $H$ is a hypergraph. 
As $G_{\lambda}f=(I+\lambda\calL) \circ D(f)$, one has 
\begin{align} \label{eqn:relationJN}
N_{\rho,\lambda}(g) =D^{-1} \circ J_{\lambda}(g)
\end{align}
for $g \in G_{\lambda}(U_{\rho})$. 
Since $J_\lambda$ is a (single-valued) continuous function by Lemma \ref{lem:conti-of-resolvent}, the relation (\ref{eqn:relationJN}) holds for 
$g \in \overline{G_{\lambda}(U_{\rho})}$. 
Here it follows from (the proof of) Proposition \ref{prop:finitenorm} that the $\lambda$-nonlinear Kantorovich difference $\KD_{\lambda}(x_0,y_0)$ of $x_0 \in V$ and $y_0 \in V$ is given by
   \begin{align}
    \KD_{\lambda}(x_0,y_0)=\sup\left\{\la J_{\lambda}g,\delta_{x_0}-\delta_{y_0}\ra\;;\;g\in F \right\}, \notag
   \end{align} 
where $F$ is the set of functions $g\in\mathbb{R}^n$ satisfying the conditions
\[
\text{(a) } 0 \le g_x \leq d_x \cdot \diam(H) \quad (x\in V), \quad \text{(b) } \la g,\delta_x-\delta_y\ra\leq d(x,y) \quad (x,y\in V). 
\]
Since $N_{\rho, \lambda}$ and $D$ are symmetric matrices and $\{G_{\lambda}(U_{\rho})\}_{\rho \in R}$ covers the whole space $\mathbb{R}^n$, we have 
   \begin{align}
    \KD_{\lambda}(x_0,y_0)= & \max_{\rho \in R_n} \,\sup\left\{\la J_{\lambda}g,\delta_{x_0}-\delta_{y_0}\ra\;;\;g\in F \cap \overline{G_{\lambda}(U_{\rho})} \right\} \label{eqn:maxsupKD} \\
    = & \max_{\rho \in R_n} \,\sup\left\{\la g,D \circ N_{\rho,\lambda}(\delta_{x_0}-\delta_{y_0})\ra\;;\;g\in F \cap \overline{G_{\lambda}(U_{\rho})} \right\}. \notag
   \end{align} 

Now we consider an order $<$ on $K(z)$, given so that 
$p(z), q(z) \in K(z)$ satisfy $p(z)< q(z)$ if and only if $q(z)-p(z)=z^k r(z)$ for 
some $k \in \mathbb{Z}$ and $r(z) \in K(z)$ with $0<r(0)<\infty$. 
In other words, $p(z), q(z) \in K(z)$ satisfy $p(z)< q(z)$ if and only if there exists $\lambda_0>0$ such that 
$p(\lambda)<q(\lambda)$ for any $\lambda \in \mathbb{R}$ with $0<\lambda< \lambda_0$. 
Then $<$ becomes a total order on $K(z)$ and thus $(K(z),<)$ is an ordered field. 

With the notation in Lemma \ref{lem:convconeofimage}, we consider the convex cone 
\[
W_\rho:=\mathrm{ConvCone}_{K(z)}(\{v_1(z),\dots,v_k(z)\})=\sum_{i=1}^k K(z)_{\ge 0} \cdot v_i(z) \subset K(z)^n. 
\]
It should be noted that some of the concepts of linear programming over the real numbers, such as Farkas-Minkowski-Weyl theorem and the simplex method, can be easily 
extended to that over an arbitrary ordered field (see \cite{jeroslow1973asymptotic, joswig2015linear}). Farkas-Minkowski-Weyl theorem says that there exist vectors $w_1(z),\dots, w_m(z) \in K(z)^n$ such that 
the convex cone $W_{\rho}$ is expressed as
\[
W_{\rho}=\{ g(z) \in K(z)^n \;;\, \langle g(z),w_i(z) \rangle \le 0 \; (1\le i \le m) \}. 
\]
In viewing (\ref{eqn:maxsupKD}), we consider the linear program $\mathrm{LP}(z)$: 
\begin{align}
\text{maximize} \quad & \la g(z),D \circ N_{\rho,z}(\delta_{x_0}-\delta_{y_0})\ra \notag \\
\text{subject to} \quad & \text{(a) } \, 0 \le g(z)_x \leq d_x \cdot \diam(H) \quad (x\in V) \notag \\
 & \text{(b) } \, \la g(z),\delta_x-\delta_y\ra\leq d(x,y) \quad (x,y\in V) \notag \\
 & \text{(c) } \, \langle g(z),w_i(z) \rangle \le 0 \quad  (1\le i \le m). \notag
\end{align}
As the range of $g(z)$ is bounded, the simplex method guarantees that there exists an optimal solution $g^{(\rho)}(z) \in K(z)^n$ to the linear program $\mathrm{LP}(z)$ with optimal value $h^{(\rho)}(z) \in K(z)$. 
Moreover the following proposition holds (see \cite{jeroslow1973asymptotic}, 2.3, \cite[Corollary 2]{joswig2015linear}). 
\begin{prop} \label{prop:linearprogram}
Under the above notations, there exists $\lambda_{\rho} \in \mathbb{R}_{>0}$ such that for every $0<\lambda < \lambda_{\rho}$, 
$g^{(\rho)}(\lambda) \in K(\lambda)^n$ is an optimal solution to the linear program $\mathrm{LP}(\lambda)$ with optimal value $h^{(\rho)}(\lambda) \in K(\lambda)$. 
\end{prop}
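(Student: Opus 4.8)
The plan is to treat $\mathrm{LP}(z)$ as a linear program over the ordered field $(K(z),<)$, extract an optimality certificate for it there, and then specialize that certificate to $z=\lambda$ for all small $\lambda>0$. First I would record that, both over $(K(z),<)$ and over $\RR$ for every fixed $\lambda>0$, the program in question is feasible and bounded: the zero vector satisfies constraints (a), (b), (c), and constraint (a) bounds every coordinate of any feasible point. Running the simplex method over the ordered field therefore terminates and, from its final tableau, produces not only the optimal basic solution $g^{(\rho)}(z)\in K(z)^n$ with optimal value $h^{(\rho)}(z)\in K(z)$ but also an optimal solution $y^{(\rho)}(z)\in K(z)^m$ of the dual program, with $g^{(\rho)}(z)$ primal feasible, $y^{(\rho)}(z)$ dual feasible, and both having objective value $h^{(\rho)}(z)$; equivalently, the pair $(g^{(\rho)}(z),y^{(\rho)}(z))$ satisfies a fixed \emph{finite} list of $K(z)$-equalities and $K(z)$-inequalities encoding primal feasibility, dual feasibility, and equality of primal and dual values.

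The second ingredient is the defining property of $<$: for a nonzero element $p(z)=z^k r(z)\in K(z)$ with $k\in\ZZ$ and $0<|r(0)|<\infty$, the sign of $p(\lambda)$ is constant for all $\lambda$ in some interval $(0,\lambda_p)$ and equals $\mathrm{sgn}_{<}p(z)$, while for $p(z)\equiv 0$ of course $p(\lambda)=0$. The certificate just described involves only finitely many such $p(z)$ — namely the left–right differences of its (in)equalities; let $\lambda_\rho$ be the minimum of the corresponding $\lambda_p$, shrunk further if necessary so that for $0<\lambda<\lambda_\rho$ the cone identity $\overline{G_\lambda(U_\rho)}=\mathrm{ConvCone}_{\RR}(\{v_1(\lambda),\dots,v_k(\lambda)\})$ of Lemma~\ref{lem:convconeofimage} holds, the conclusions of Proposition~\ref{prop:piecewiselinear} hold, and no denominator occurring in $g^{(\rho)}(z)$, $y^{(\rho)}(z)$, $h^{(\rho)}(z)$, $v_i(z)$ or $w_i(z)$ vanishes. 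Then for every such $\lambda$, evaluating the certificate at $z=\lambda$ shows that $g^{(\rho)}(\lambda)$ is primal feasible for $\mathrm{LP}(\lambda)$, $y^{(\rho)}(\lambda)$ is dual feasible, and both have objective value $h^{(\rho)}(\lambda)$; by weak duality over $\RR$ this forces $g^{(\rho)}(\lambda)$ to be optimal for $\mathrm{LP}(\lambda)$ with optimal value $h^{(\rho)}(\lambda)$, which is the assertion. Since this is precisely the content of asymptotic (parametric) linear programming, in the write-up I would simply invoke \cite{jeroslow1973asymptotic,joswig2015linear} rather than reproduce the argument in full.

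The main obstacle is the soft step of actually obtaining, over the ordered field $K(z)$, an optimal primal–dual pair \emph{together with} a certificate consisting of finitely many sign conditions: this requires the extension of the simplex method and of LP duality from $\RR$ to an arbitrary ordered field, and one must fix a terminating anti-cycling pivot rule (e.g.\ Bland's rule) so that the run is finite and every decision it makes is a comparison in $(K(z),<)$, guaranteeing that only finitely many elements of $K(z)$ are ever consulted. Once the certificate is available, the transfer to small $\lambda$ is the routine sign-stabilization argument above, and the only remaining point to check is compatibility with the earlier identifications — that $N_{\rho,\lambda}=D^{-1}\circ J_\lambda$ on $\overline{G_\lambda(U_\rho)}$ as in~(\ref{eqn:relationJN}), so that $h^{(\rho)}(\lambda)$ is indeed the supremum appearing in the $\rho$-term of~(\ref{eqn:maxsupKD}).
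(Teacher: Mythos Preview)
The paper does not give its own proof of this proposition: it is stated with a direct citation to \cite{jeroslow1973asymptotic} (Section~2.3) and \cite{joswig2015linear} (Corollary~2), and the argument is not reproduced. Your proposal is a correct sketch of precisely that cited asymptotic/parametric LP argument --- run simplex over the ordered field $(K(z),<)$, extract a finite primal--dual optimality certificate, and transfer it to real $\lambda$ by stabilizing the finitely many sign conditions involved --- and you yourself note that in a write-up you would simply invoke those references, which is exactly what the paper does. So your approach and the paper's coincide; you have just unpacked the citation.

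One small remark: the only place your sketch could be tightened is the identification of constraint~(c) for $\mathrm{LP}(\lambda)$, namely $\langle g, w_i(\lambda)\rangle\le 0$, with the condition $g\in\overline{G_\lambda(U_\rho)}$ used in~(\ref{eqn:maxsupKD}). You mention shrinking $\lambda_\rho$ so that the V-description from Lemma~\ref{lem:convconeofimage} holds, but that lemma already gives the V-description for \emph{all} $\lambda>0$; what is really needed is that the H-description $\{g:\langle g,w_i(\lambda)\rangle\le 0\}$, obtained by specializing the $K(z)$-level Farkas--Minkowski--Weyl dual, agrees with the real cone for small $\lambda$. This too follows from the same finitely-many-sign-conditions transfer (it is part of the parametric LP machinery you cite), but it is worth saying explicitly since the paper, like you, leaves it implicit.
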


\begin{proof}[Proof of Theorem \ref{thm:existenceofRicci}]
As $\#R_n<\infty$, $h^*(z):=\max\{h^{(\rho)}(z) \;;\; \rho \in R_n\}$ and $\lambda_*:=\min\{\lambda_{\rho} \;;\; \rho \in R_n\}$ satisfy $h^*(z) \in K(z)$ and $\lambda_*>0$.   
Thanks to (\ref{eqn:maxsupKD}) and Proposition \ref{prop:linearprogram}, the Kantorovich difference $\KD_{\lambda}(x_0,y_0)$ is expressed as 
$\KD_{\lambda}(x_0,y_0)=h^*(\lambda)$ for any $0<\lambda < \lambda_*$. 
Since $h^{*}(z)$ is a rational function of $z$, the limit in (\ref{eqn:existenceofRicci}) exists, which establishes the theorem. 
\end{proof}

\section{More general settings}\label{sec:general-settings}


Our arguments so far are applicable to more general settings for 
 submodular transformations. Here, submodular transformation  
is a vector valued set function consisting of submodular functions.   
In this section, we review about submodular functions, submodular 
transformations, and these Laplacian and show some examples. 
We also give a sufficient condition for a submodular transformation 
to be able to straightforwardly generalize the curvature notions in Section~\ref{sec:def-of-curvature-hypergraphs} and theorems in Section~\ref{sec:applications}.  
For more details about submodular transformations, see~\cite{yoshida2019cheeger}. 

\subsection{Submodular function} 
\label{subsec:submodular-functions}

Let $V$ be a nonempty finite set. 
A function 
$F\colon 2^V \to \RR$ is 
a submodular function if for any $S, T \subset V$, $F$ satisfies 
\[
 F(S) + F(T) \geq F(S\cup T) + F(S\cap T). 
\]
An element $v \in V$ is relevant in $F\colon 2^V \to \RR$ 
if there is a $S \subset V$ such that $F(S) \neq F(S\cup\{ v\})$. 
We say that $v$ is irrelevant in $F$ if $v$ is not relevant in $F$. 
We define the support $\supp(F)$ of $F$ as the set of elements which are 
relevant in $F$. 
A set function $F\colon 2^V \to \RR$ is symmetric if $F(S) = F(V{\setminus}S)$ holds for any $S$. 
We say that $F$ is normalized if $F(V) = 0$. 

\begin{example}
 Let $H = (V, E)$ be a hypergraph, and $e \in E$ a hyperedge. 
Then, the cut function $F_e$ of $e$ defined as follows is a submodular function: 
\[
 F_e(S) = \begin{cases}
 	1 & \text{if } e\cap S \neq \emptyset \text{ and } e\cap (V{\setminus}S) \neq \emptyset, \\ 
 	0 & \text{otherwise}.
 \end{cases}
\] 
It is easy to show that a vertex $v\in V$ is relevant in $F_e$ if and only if $v \in e$. Furthermore, $F_e$ is symmetric and normalized.  
\end{example}

For a submodular function $F\colon 2^V \to \RR$, we define 
\begin{align*}
	P(F) &:= \left\{ g \in \RR^V\ ; \sum_{x \in S} g(x) \leq F(S) \text{ for any } S \subset V \right\} \text{ and } \\ 
	B(F) &:= \left\{ g \in P(F)\ ; \sum_{x \in V} g(x) = F(V) \right\}
\end{align*}
called the submodular polyhedron and the base polytope respectively. 
Then, it is known that $B(F)$ is a bounded polytope.  

The Lov\'{a}sz extension $f \colon \RR^V \to \RR$ of a submodular 
 function 
$F\colon 2^V \to \RR$ is defined by 
\[
 f(g) := \max_{\ttb\in B(F)} \ttb^\top g. 
\]
It is known that $f(\chi_S) = F(S)$ for any $S\subset V$. Here, 
$\chi_S$ is the characteristic function of $S$. In particular, $f$ 
is indeed an extension of $F$. It is also known that 
the Lov\'{a}sz extension $f$ of a submodular function $F$ is convex (\cite[Proposition3.6]{FrancisBach2013}). 

For the Lov\'{a}sz extension $f$ of a submodular function $F$, 
we set 
\[
 \partial f(g) := \argmax_{\ttb \in B(F)} \ttb^\top g. 
\]
Then, it is known that 
$\partial f(g)$ is the sub-differential of $f$ at $g$. 

\subsection{Submodular transformation and submodular Laplacian}
\label{subsec:submodular-transformations}

Let $V$ and $E$ be nonempty finite sets. 
A function 
$F\colon 2^V \to \RR^E; S\mapsto F(S) = (F_e(S))_{e\in E}$ is called a submodular transformation if each $F_e$ is a submodular function. 
A submodular transformation $F$ is symmetric (resp. normalized) if any $F_e$ is symmetric (resp. normalized). 

The Lov\'{a}sz extension 
$f\colon \RR^V \to \RR^E$ of a submodular 
transformation $F$ is defined by $f = (f_e)$ such that $f_e$ is the 
Lov\'{a}sz extension of $F_e$. 

For a submodular transformation $F\colon \RR^V \to \RR^E$, 
we consider a weight function $w \colon E \to \RR_{>0}$. 
Then, we call the quadruple $(V,E,F,w)$ a weighted submodular transformation. We stand for the quadruple as $F$.   
We define the degree $d_x$ for $x \in V$ by 
$d_x := \sum_{e \in E; x \in \supp(F_e)} w_e$ and the volume 
$\vol(S)$ of $S \subset V$ by $\vol(S) := \sum_{x\in S} d_x$.  
For $x, y\in V$, $x$ and $y$ is adjacent, denoted by $x\sim y$, 
if there exists an element $e\in E$ such that $x,y\in \supp(F_e)$. 
By this relation, we can define the distance function $d\colon V\times V \to \RR_{\geq 0}$ and connectivity of $F$ as 
in Section~\ref{subsec:prelim-hypergraphs}.  

We define the degree matrix 
$D := \mathrm{diag}(d_x)_{x\in V} \in \RR^{V\times V}$. We remark that
 if $F$ is connected, $D$ is invertible. 


Let $F=(V,E,F,w)$ be a submodular transformation. 
Then, we define the submodular Laplacian $L \colon \RR^V \to 2^{\RR^V}$ by 
\[
 L(g) := \left\{ \sum_{e\in E} w(e) \ttb_e \ttb_e^\top g ; \
 \ttb_e \in \partial f_e(g)
 \right\} \subset \RR^V.  
\]
We call $\ca{L}:=L\circ D^{-1}$ the normalized Laplacian. 
We set the inner product $\la f, g \ra:= f^\top D^{-1} g$ and 
consider $(\RR^V, \la\cdot,\cdot \ra)$ as a Hilbert space. 
Then, by a similar argument as 
in~\cite[Lemma~14, Lemma~15]{ikeda2018finding}, the following holds: 
\begin{prop}\label{prop:maximal-monotone-submodular}
The normalized Laplacian $\ca{L}$ is a 
maximal monotone operator on the Hilbert space $(\RR^V, \la\cdot,\cdot\ra)$. 
\end{prop}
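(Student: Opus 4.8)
The plan is to realise the normalized Laplacian as a subdifferential and then invoke the classical fact that the subdifferential of a proper, lower semicontinuous convex function on a Hilbert space is maximal monotone (see e.g.\ \cite{brezis1973ope,miyadera1992nonlinear}), exactly as in \cite[Lemma~14, Lemma~15]{ikeda2018finding} for hypergraphs. The candidate is
\[
 \Psi(g):=\frac12\sum_{e\in E}\omega(e)\,f_e\bigl(D^{-1}g\bigr)^{2}\qquad(g\in\RR^V),
\]
where $f_e$ is the Lov\'asz extension of $F_e$. Each $f_e$ is finite and convex on the finite dimensional space $\RR^V$, hence continuous, so $\Psi$ is finite, convex and continuous, and the whole content is the identity $\ca{L}=\partial\Psi$, the subdifferential being taken with respect to the inner product $\langle u,v\rangle=u^\top D^{-1}v$.

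To establish this identity I would first record two standard facts about the support function $f_e(h)=\max_{\ttb\in B(F_e)}\ttb^\top h$: its subdifferential is the face of maximizers, $\partial f_e(h)=\argmax_{\ttb\in B(F_e)}\ttb^\top h$, and hence $\ttb_e^\top h=f_e(h)$ for every $\ttb_e\in\partial f_e(h)$. Writing $\Phi(h):=\tfrac12\sum_{e\in E}\omega(e)f_e(h)^2$, one has $\Psi=\Phi\circ D^{-1}$, and since $D$ is symmetric a short computation identifies the $\langle\cdot,\cdot\rangle$-subdifferential of $\Psi$ at $g$ with the ordinary subdifferential of $\Phi$ at $D^{-1}g$. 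By the sum rule and the chain rule for convex functions — the latter applicable because $f_e\geq 0$, so that $t\mapsto\tfrac12 t^2$ is convex and nondecreasing on the range of $f_e$ — the latter subdifferential equals $\sum_{e\in E}\omega(e)f_e(D^{-1}g)\,\partial f_e(D^{-1}g)=\sum_{e\in E}\omega(e)\{\ttb_e(\ttb_e^\top D^{-1}g)\ ;\ \ttb_e\in\partial f_e(D^{-1}g)\}=L(D^{-1}g)=\ca{L}(g)$, which is the desired identity. Alternatively, to stay closer to \cite{ikeda2018finding}, one checks monotonicity of $\ca{L}$ directly (immediate once $\ca{L}=\partial\Psi$, or by the elementary quadratic estimate using the maximizer property together with $f_e\geq 0$) and then verifies the range condition $\operatorname{range}(I+\lambda\ca{L})=\RR^V$ for $\lambda>0$, which upgrades ``monotone'' to ``maximal monotone'' by Minty's theorem; the range condition follows because $g\mapsto\tfrac1{2\lambda}\|g-b\|^2+\Psi(g)$ is convex, continuous and coercive, hence attains a minimizer $g^\ast$ with $0\in\tfrac1\lambda(g^\ast-b)+\ca{L}(g^\ast)$, or from the piecewise linear description of $I+\lambda\ca{L}$ in Appendix~\ref{sec:existenceRicci} combined with the surjectivity argument of \cite[Section~3.1]{fujii2018polynomial}.

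The one ingredient that is genuinely special to the submodular setting, and the main obstacle, is the nonnegativity $f_e\geq 0$: it is what keeps $\Psi$ convex, and it fails for a generic finite convex $f_e$. This is precisely where structural hypotheses on the submodular transformation must be used. If each $F_e$ is symmetric and normalized, then submodularity gives $F_e(S)+F_e(V\setminus S)\geq F_e(V)+F_e(\emptyset)=0$, hence $F_e(S)\geq 0$ for all $S\subset V$, so $0\in B(F_e)$ and $f_e(h)=\max_{\ttb\in B(F_e)}\ttb^\top h\geq 0$. For a transformation not of this form one first translates each base polytope $B(F_e)$ by one of its vertices to reduce to the normalized case, tracking the harmless effect of the translation on $L_e$; once $f_e\geq 0$ is secured, everything else is routine.
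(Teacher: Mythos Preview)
Your main line---realise $\ca{L}$ as the subdifferential (with respect to $\langle\cdot,\cdot\rangle$) of the convex function $\Psi(g)=\tfrac12\sum_e\omega(e)f_e(D^{-1}g)^2$ and invoke the fact that subdifferentials of proper lower semicontinuous convex functions are maximal monotone---is exactly what the paper has in mind: it cites \cite[Lemma~14, Lemma~15]{ikeda2018finding} and states immediately afterwards that $\ca{L}$ is the subdifferential of this very function. Your chain-rule computation $\partial\Psi=\ca{L}$ is correct under the hypothesis $f_e\ge 0$, and you are right to single this hypothesis out.

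The gap is in the last paragraph. The translation trick does \emph{not} reduce the general case to the nonnegative one: shifting $B(F_e)$ to $B(F_e)-v_e$ changes the operator from $g\mapsto f_e(g)\,\partial f_e(g)$ to $g\mapsto (f_e(g)-v_e^\top g)\,(\partial f_e(g)-v_e)$, and the cross terms are not ``harmless''. In fact the proposition is false without $f_e\ge 0$. Take $V=\{1,2\}$, a single $e$ with $\omega_e=1$ and $F_e(\emptyset)=0$, $F_e(\{1\})=F_e(\{2\})=F_e(V)=1$ (submodular, not normalized). Then $B(F_e)=\{(t,1-t):0\le t\le 1\}$, $D=I$, and $f_e(h)=\max(h_1,h_2)$. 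For $h=(-1,-2)$ and $h'=(-2,-1)$ one gets $\ca{L}h=(-1,0)$, $\ca{L}h'=(0,-1)$ and $\langle \ca{L}h-\ca{L}h',\,h-h'\rangle=-2<0$, so $\ca{L}$ is not monotone. A normalized counterexample also exists: shift the cut function of the edge $\{1,2\}$ on $V=\{1,2,3\}$ by the modular function $c=(1,1,-2)$; then $B(F_e)=\{(t+1,1-t,-2):-1\le t\le 1\}$, and with $h=(-1,-2,0)$, $h'=(-2,-1,0)$ one computes $\langle \ca{L}h-\ca{L}h',\,h-h'\rangle=-8<0$.

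So you should drop the translation argument and instead record the hypothesis that makes your proof go through: $0\in B(F_e)$ for every $e$, equivalently each $F_e$ is nonnegative with $F_e(V)=0$. (Your ``symmetric and normalized'' is a sufficient condition for this, but not necessary---the directed-hypergraph example is normalized and nonnegative without being symmetric.) Under this hypothesis $f_e\ge 0$, $\Psi$ is convex, your identity $\ca{L}=\partial\Psi$ holds, and the proof is complete; all the examples in the paper satisfy it.
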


More strongly, 
the normalized Laplacian $\ca{L}$ is the sub-differential 
of the convex function $Q\colon \RR^V \to \RR$ defined by 
\[
 Q(\ol{g}) = \frac{1}{2}\sum_{e\in E} w_e f_e(\ol{g})^2, 
\]
where $\ol{g} = D^{-1}g$ with $g\in \mathbb{R}^V$.

By Proposition~\ref{prop:maximal-monotone-submodular}, 
 we can define the resolvent $J_\lambda$, the canonical restriction 
 $\ca{L}_0$, and the heat semigroup $\{h_t\}_{t\ge 0}$ for the Laplacian $\ca{L}$. 
Then, the straight extension of Lemma~\ref{lem:homogenity-of-Laplacian} holds. 

We define $\pi \in \RR^V$ as $\pi(x) = d_x/\vol(V)$. Then, the following holds: 
\begin{lem}[{\cite[Lemma~3.1]{yoshida2019cheeger}}]\label{lem:vanish-Laplacian-submodular}
 We assume that $F$ is normalized, i.e., $F_e(V) = 0$ for any $e\in E$. Then, $\ca{L}(\pi) = 0$ holds. 
\end{lem}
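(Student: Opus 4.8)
The plan is to unwind the definitions and feed in the normalization hypothesis $F_e(V)=0$ directly; no limiting argument or monotone-operator machinery is needed. First I would observe that $D^{-1}\pi$ is the constant function, namely $(D^{-1}\pi)(x)=\pi(x)/d_x=1/\vol(V)$ for every $x\in V$. By definition $\ca{L}(\pi)=L(D^{-1}\pi)$, and every element of this set is of the form $\sum_{e\in E}\omega(e)\,\ttb_e\,(\ttb_e^{\top}D^{-1}\pi)$ with $\ttb_e\in\partial f_e(D^{-1}\pi)\subset B(F_e)$.

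The key step is the computation of $\ttb_e^{\top}D^{-1}\pi$. For any $e\in E$ and any $\ttb_e\in B(F_e)$, the defining condition of the base polytope gives $\sum_{x\in V}\ttb_e(x)=F_e(V)$, and the normalization hypothesis says $F_e(V)=0$. Hence $\ttb_e^{\top}D^{-1}\pi=\tfrac{1}{\vol(V)}\sum_{x\in V}\ttb_e(x)=0$. This is the exact analogue of the identity $\sfb^{\top}D^{-1}\pi=0$ for $\sfb\in B_e$ used in the proof of Lemma~\ref{lem:vanish} in the hypergraph case. Note that it holds for \emph{every} point of $B(F_e)$, so we never have to identify the maximizing set $\partial f_e(D^{-1}\pi)$ explicitly.

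Plugging this in, each summand $\omega(e)\,\ttb_e\,(\ttb_e^{\top}D^{-1}\pi)$ is zero regardless of the choice of $\ttb_e\in\partial f_e(D^{-1}\pi)$, so $L(D^{-1}\pi)=\{0\}$, i.e.\ $\ca{L}(\pi)=0$. I do not expect any genuine obstacle: the only point requiring a little care is keeping track of the normalization $D^{-1}$ built into the inner product and the Laplacian, so that one correctly evaluates $\ttb_e^{\top}D^{-1}\pi$ rather than $\ttb_e^{\top}\pi$; once that is done the argument is a one-line consequence of the normalization.
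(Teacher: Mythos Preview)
Your proof is correct. The paper does not actually supply its own proof of this lemma---it merely cites \cite[Lemma~3.1]{yoshida2019cheeger}---but your argument is exactly the natural one, and it mirrors the computation the paper carries out for the hypergraph case in Lemma~\ref{lem:vanish}: there the key identity $\sfb^{\top}D^{-1}\pi=0$ came from $\sfb$ being a convex combination of $\delta_x-\delta_y$, while here it comes from the base-polytope constraint $\sum_{x\in V}\ttb_e(x)=F_e(V)=0$, which is the appropriate generalization.
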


By Lemma~\ref{lem:vanish-Laplacian-submodular}, the similar lemmas as 
Lemma~\ref{lem:homogenity-of-Laplacian} and Lemma~\ref{lem:homogenity-of-resolvent} hold for the normalized submodular Laplacian $\ca{L}$. 
This implies that by similar arguments, we can obtain the 
straightforward extensions of 
definitions and theorems in 
Section~\ref{sec:def-of-curvature-hypergraphs} and 
Section~\ref{sec:applications} for any normalized submodular 
transformation $F$ with the normalized submodular Laplacian 
$\ca{L}$ for $F$. 

\subsection{Examples}\label{subsec:submodular-examples}

In \cite{yoshida2019cheeger}, Yoshida gave many examples of submodular transformations such as 
undirected graphs (Example~1.1, 1.2, and 1.4), directed graphs (Example~1.5), hypergraphs (Example~1.6), submodular hypergraphs 
(Example~1.7), mutual information (Example~1.8), and directed information (Example~1.9). 
We here give another example: 
\begin{example}[directed hypergraph] 
A weighted directed hypergraph $H$ is defined as 
the triple $H = (V, E, w)$ of a set of vertices $V$, a set of hyperarcs $E \subset 2^V \times 2^V$, and a weight function $w\colon E \to \RR_{>0}$. 
Here, a hyperarc $e \in E$ is an ordered pair $(t_e, h_e)$ of 
a set of tails $t_e$ and a set of heads $h_e$. 
If the identities $|t_e|=|h_e|=1$ hold for any $e \in E$, then $H$ is a usual 
directed graph. If $t_e = h_e$ holds for any $e\in E$, 
then $H$ can be regarded as an undirected hypergraph\footnote{This specialization seems to be strange. However, from the viewpoint of submodular transformation, this looks natural. Indeed, under the assumption $t_e = h_e$, 
the cut function is same as that of undirected hypergraphs}. 
Hence, a directed hypergraph is a generalization of directed graphs 
and hypergraphs. 

We define the set function $F_e\colon 2^V \to \RR$ as the 
cut function for $e = (t_e, h_e)$, i.e., 
\[
 F_e(S) := \begin{cases}
 	1 & \text{if } S\cap t_e \neq \emptyset \text{ and } 
 	(V{\setminus}S)\cap h_e \neq \emptyset, \\ 
 	0 & \text{otherwise.}  
 \end{cases}
\]
Then, it is easy to show that $F_e$ is submodular. 
Hence, the quadruple $F = (V, E, F=(F_e)_{e}, w)$ becomes 
a submodular transformation.  
We remark that $F$ is normalized and not symmetric. 

For this submodular transformation, by a simple calculation from definition of $B(F_e)$, we have 
\begin{align}
 B(F_e) = 
 \mathrm{Conv}(\{ \delta_x - \delta_y; x \in t_e, y \in h_e \}\cup 
 \{ 0 \}). \label{eq:base-polytope-directed-hyper} 
\end{align}
The base polytope for hypergraph~\eqref{eq:base-polytope-hyper} is 
a realization of this for $t_e = h_e$. 
By the representation~\eqref{eq:base-polytope-directed-hyper}, 
the Lov\'{a}sz extension $f_e$ of $F_e$ is written as 
\[
 f_e(g) = \max\{ \max\{ g(x) - g(y) ; x \in t_e, y\in h_e\}, 0\}. 
\]
\end{example}

We note that for all examples introduced in this subsection, $F$ is 
 normalized, i.e., $F(V) = 0$.
Hence, the similar definitions of coarse Ricci curvatures for 
$F$ as in Section~\ref{sec:def-of-curvature-hypergraphs} and the similar theorems as in Section~\ref{sec:applications} hold. 

\section{Concluding Remark}
Comparing properties of curvatures for the examples in this paper with those of other curvatures introduced by \cite{ozawa2020geometric, eidi2020ollivier} is an interesting problem.  The definitions of Ricci curvature in these two papers deeply related to random walks. For the authors, the canonical random walks on hypergraphs are not clear. Of course, one can define the random walk as in \cite{eidi2020ollivier}, which seems related to the clique expansion. It is unclear for the authors that the clique expansion of a hypergraph reflect the characteristics of its own hypergraph structure. At least, because the hypergraph Laplacian is multivalued and nonlinear, there was no canonical way to define the transition probabilities of random walkers using it. For these reasons, still we do not know any essential relation between theirs and ours. We leave it for a future work.


\vspace{1cm}
No data associate for the submission

\bibliography{main.bib}

@article{LV:2007,
	author = {J. Lott and C. Villani},
	date-added = {2023-02-01 13:46:16 +0900},
	date-modified = {2023-02-01 13:47:56 +0900},
	journal = {J. Funct. Anal.},
	number = {2},
	pages = {305--333},
	title = {Weak curvature conditions and functional inequalities},
	volume = {245},
	year = {2007}}

@article {Kitabeppu_Matsumoto,
    AUTHOR = {Kitabeppu, Yu and Matsumoto, Erina},
     TITLE = {Cheng's maximal diameter theorem for hypergraphs},
   JOURNAL = {Tohoku Math. J.},
  FJOURNAL = {The Tohoku Mathematical Journal. Second Series},
    VOLUME = {75},
      YEAR = {2023},
    NUMBER = {1},
     PAGES = {119--130},
      ISSN = {0040-8735},
   MRCLASS = {51F30},
  MRNUMBER = {4564845},
MRREVIEWER = {Chris Gartland},
       DOI = {10.2748/tmj.20211202},
       URL = {https://doi.org/10.2748/tmj.20211202},
}

@article {Ikeda:2023,
    AUTHOR = {Ikeda, Masahiro and Uchida, Shun},
     TITLE = {Nonlinear evolution equation associated with hypergraph
              {L}aplacian},
   JOURNAL = {Math. Methods Appl. Sci.},
  FJOURNAL = {Mathematical Methods in the Applied Sciences},
    VOLUME = {46},
      YEAR = {2023},
    NUMBER = {8},
     PAGES = {9463--9476},
      ISSN = {0170-4214},
   MRCLASS = {34B45 (05C50 05C65 34C25 49J40)},
  MRNUMBER = {4589878},
MRREVIEWER = {Yilun Shang},
       DOI = {10.1002/mma.9068},
       URL = {https://doi.org/10.1002/mma.9068},
}

@article{FrancisBach2013,
	author = {Francis Bach},
	date-added = {2022-12-21 06:47:40 +0900},
	date-modified = {2022-12-21 06:49:29 +0900},
	journal = {Foundations and Trends in Machine Learning},
	number = {2-3},
	pages = {145--373},
	title = {Learning with Submodular Functions: A Convex Optimization Perspective},
	volume = {6},
	year = {2013}}

@article{jeroslow1973asymptotic,
	author = {Jeroslow, Robert G},
	journal = {Operations Research},
	number = {5},
	pages = {1128--1141},
	publisher = {INFORMS},
	title = {Asymptotic linear programming},
	volume = {21},
	year = {1973}}

@inproceedings{joswig2015linear,
	author = {Joswig, Michael and Loho, Georg and Lorenz, Benjamin and Schr{\"o}ter, Benjamin},
	booktitle = {International Conference on Mathematical Aspects of Computer and Information Sciences},
	organization = {Springer},
	pages = {429--445},
	title = {Linear programs and convex hulls over fields of Puiseux fractions},
	year = {2015}}

@book{zhan2013matrix,
	author = {Zhan, Xingzhi},
	publisher = {American Mathematical Soc.},
	title = {Matrix theory},
	volume = {147},
	year = {2013}}

@article{schmuckenschlager1999curvature,
	author = {Schmuckenschl{\"a}ger, Michael},
	journal = {Convex geometric analysis (Berkeley, CA, 1996)},
	pages = {189--197},
	title = {Curvature of nonlocal Markov generators},
	volume = {34},
	year = {1999}}

@article{bonciocat2009mass,
	author = {Bonciocat, Anca-Iuliana and Sturm, Karl-Theodor},
	journal = {J. Funct. Anal.},
	number = {9},
	pages = {2944--2966},
	publisher = {Elsevier},
	title = {Mass transportation and rough curvature bounds for discrete spaces},
	volume = {256},
	year = {2009}}

@article{erbar2012ricci,
	author = {Erbar, Matthias and Maas, Jan},
	journal = {Archive for Rational Mechanics and Analysis},
	number = {3},
	pages = {997--1038},
	publisher = {Springer},
	title = {Ricci curvature of finite Markov chains via convexity of the entropy},
	volume = {206},
	year = {2012}}

@article{munch2018li,
	author = {M{\"u}nch, Florentin},
	journal = {J. Math. Pures Appl.},
	pages = {130--164},
	publisher = {Elsevier},
	title = {{Li--Yau inequality on finite graphs via non-linear curvature dimension conditions}},
	volume = {120},
	year = {2018}}

@article{munch2017remarks,
	author = {M{\"u}nch, Florentin},
	journal = {Calc. Var. Partial Differ. Equ.},
	number = {1},
	pages = {11},
	publisher = {Springer},
	title = {Remarks on curvature dimension conditions on graphs},
	volume = {56},
	year = {2017}}

@article{von2005transport,
	author = {von Renesse, Max-K and Sturm, Karl-Theodor},
	journal = {Communications on pure and applied mathematics},
	number = {7},
	pages = {923--940},
	publisher = {Wiley Online Library},
	title = {Transport inequalities, gradient estimates, entropy and Ricci curvature},
	volume = {58},
	year = {2005}}

@article{eidi2020ollivier,
	author = {Eidi, Marzieh and Jost, J{\"u}rgen},
	journal = {Scientific Reports},
	number = {1},
	pages = {1--14},
	publisher = {Nature Publishing Group},
	title = {Ollivier Ricci curvature of directed hypergraphs},
	volume = {10},
	year = {2020}}

@inproceedings{louis2015hypergraph,
	author = {Louis, Anand},
	booktitle = {Proceedings of the forty-seventh annual ACM symposium on Theory of computing},
	pages = {713--722},
	title = {Hypergraph markov operators, eigenvalues and approximation algorithms},
	year = {2015}}

@article{chan2018spectral,
	author = {Chan, T-H Hubert and Louis, Anand and Tang, Zhihao Gavin and Zhang, Chenzi},
	journal = {Journal of the ACM (JACM)},
	number = {3},
	pages = {1--48},
	publisher = {ACM New York, NY, USA},
	title = {Spectral properties of hypergraph laplacian and approximation algorithms},
	volume = {65},
	year = {2018}}

@inproceedings{asoodeh2018curvature,
	author = {Asoodeh, Shahab and Gao, Tingran and Evans, James},
	booktitle = {2018 {IEEE Conference on Decision and Control (CDC)}},
	organization = {IEEE},
	pages = {1180--1185},
	title = {Curvature of hypergraphs via multi-marginal optimal transport},
	year = {2018}}

@inproceedings{takai2020hypergraph,
	author = {Takai, Yuuki and Miyauchi, Atsushi and Ikeda, Masahiro and Yoshida, Yuichi},
	booktitle = {{Proceedings of the 26th ACM SIGKDD International Conference on Knowledge Discovery \& Data Mining}},
	pages = {1970--1978},
	title = {{Hypergraph Clustering Based on PageRank}},
	year = {2020}}

@inproceedings{yoshida2019cheeger,
	author = {Yoshida, Yuichi},
	booktitle = {{Proceedings of the Thirtieth Annual ACM-SIAM Symposium on Discrete Algorithms}},
	organization = {SIAM},
	pages = {2582--2601},
	title = {Cheeger inequalities for submodular transformations},
	year = {2019}}

@article{ollivier2009ricci,
	author = {Ollivier, Yann},
	journal = {J. Funct. Anal.},
	number = {3},
	pages = {810--864},
	publisher = {Elsevier},
	title = {Ricci curvature of Markov chains on metric spaces},
	volume = {256},
	year = {2009}}

@article{ozawa2020geometric,
	author = {Ozawa, Ryunosuke and Sakurai, Yohei and Yamada, Taiki},
	journal = {Calc. Var. Partial Differ. Equ.},
	number = {4},
	pages = {1--39},
	publisher = {Springer},
	title = {Geometric and spectral properties of directed graphs under a lower Ricci curvature bound},
	volume = {59},
	year = {2020}}

@article{lin2011ricci,
	author = {Lin, Yong and Lu, Linyuan and Yau, Shing-Tung},
	journal = {Tohoku Mathematical Journal, Second Series},
	number = {4},
	pages = {605--627},
	publisher = {Mathematical Institute, Tohoku University},
	title = {Ricci curvature of graphs},
	volume = {63},
	year = {2011}}

@article{ikeda2018finding,
	author = {Ikeda, Masahiro and Miyauchi, Atsushi and Takai, Yuuki and Yoshida, Yuichi},
	date-modified = {2022-12-21 09:27:46 +0900},
	journal = {Theor. Comput. Sci.},
	pages = {1--23},
	title = {Finding {C}heeger cuts in hypergraphs via heat equation},
	volume = {930},
	year = {2022}}

@article {chan2017diffusion,
    AUTHOR = {Hubert Chan, T.-H. and Tang, Zhihao Gavin and Wu, Xiaowei and
              Zhang, Chenzi},
     TITLE = {Diffusion operator and spectral analysis for directed
              hypergraph {L}aplacian},
   JOURNAL = {Theoret. Comput. Sci.},
  FJOURNAL = {Theoretical Computer Science},
    VOLUME = {784},
      YEAR = {2019},
     PAGES = {46--64},
      ISSN = {0304-3975},
   MRCLASS = {05C50 (05C20 05C65 60J60)},
  MRNUMBER = {3960199},
       DOI = {10.1016/j.tcs.2019.03.032},
       URL = {https://doi.org/10.1016/j.tcs.2019.03.032},
}

@article{Akamatsu,
	author = {Akamatsu, Tomoya},
	date-modified = {2022-12-21 06:46:54 +0900},
	journal = {Anal. Geom. Metr. Spaces},
	number = {1},
	pages = {90--108},
	title = {A new transport distance and its associated Ricci vurvature of hypergraphs},
	volume = {10},
	year = {2022}}

@book{showalter2013monotone,
	author = {Showalter, Ralph Edwin},
	date-added = {2018-08-31 02:31:31 +0000},
	date-modified = {2018-08-31 02:31:31 +0000},
	publisher = {American Mathematical Soc.},
	title = {Monotone Operators in Banach Space and Nonlinear Partial Differential Equations},
	volume = {49},
	year = {2013}}

@book{miyadera1992nonlinear,
	author = {Miyadera, Isao},
	date-added = {2018-08-27 10:33:06 +0000},
	date-modified = {2018-08-27 10:33:06 +0000},
	publisher = {American Mathematical Society},
	series = {Translations of Mathematical Monographs},
	title = {Nonlinear Semigroups},
	volume = {109},
	year = {1992}}

@article{AGS:2015BE,
	author = {Ambrosio, Luigi and Gigli, Nicola and Savar\'{e}, Giuseppe},
	doi = {10.1214/14-AOP907},
	fjournal = {The Annals of Probability},
	issn = {0091-1798},
	journal = {Ann. Probab.},
	mrclass = {49Q20 (30L99 47D07 60J35)},
	mrreviewer = {Andrew Bucki},
	number = {1},
	pages = {339--404},
	title = {Bakry-\'{E}mery curvature-dimension condition and {R}iemannian {R}icci curvature bounds},
	url = {https://doi.org/10.1214/14-AOP907},
	volume = {43},
	year = {2015},
	bdsk-url-1 = {https://doi.org/10.1214/14-AOP907}}

@article{LV:2009,
	author = {Lott, John and Villani, C\'{e}dric},
	doi = {10.4007/annals.2009.169.903},
	fjournal = {Annals of Mathematics. Second Series},
	issn = {0003-486X},
	journal = {Ann. of Math. (2)},
	mrclass = {53C23 (49Q15)},
	mrreviewer = {Alessio Figalli},
	number = {3},
	pages = {903--991},
	title = {Ricci curvature for metric-measure spaces via optimal transport},
	url = {https://doi.org/10.4007/annals.2009.169.903},
	volume = {169},
	year = {2009},
	bdsk-url-1 = {https://doi.org/10.4007/annals.2009.169.903}}

@article{Sturm:2006mm1,
	author = {Sturm, Karl-Theodor},
	doi = {10.1007/s11511-006-0002-8},
	fjournal = {Acta Mathematica},
	issn = {0001-5962},
	journal = {Acta Math.},
	mrclass = {53C23},
	mrreviewer = {Juha Heinonen},
	number = {1},
	pages = {65--131},
	title = {On the geometry of metric measure spaces. {I}},
	url = {https://doi.org/10.1007/s11511-006-0002-8},
	volume = {196},
	year = {2006},
	bdsk-url-1 = {https://doi.org/10.1007/s11511-006-0002-8}}

@article{Sturm:2006mm2,
	author = {Sturm, Karl-Theodor},
	doi = {10.1007/s11511-006-0003-7},
	fjournal = {Acta Mathematica},
	issn = {0001-5962},
	journal = {Acta Math.},
	mrclass = {53C23},
	mrreviewer = {Juha Heinonen},
	number = {1},
	pages = {133--177},
	title = {On the geometry of metric measure spaces. {II}},
	url = {https://doi.org/10.1007/s11511-006-0003-7},
	volume = {196},
	year = {2006},
	bdsk-url-1 = {https://doi.org/10.1007/s11511-006-0003-7}}

@article{AGS:2014Met,
	author = {Ambrosio, Luigi and Gigli, Nicola and Savar\'{e}, Giuseppe},
	doi = {10.1215/00127094-2681605},
	fjournal = {Duke Mathematical Journal},
	issn = {0012-7094},
	journal = {Duke Math. J.},
	mrclass = {35R01 (60J45 60J65)},
	number = {7},
	pages = {1405--1490},
	title = {Metric measure spaces with {R}iemannian {R}icci curvature bounded from below},
	url = {https://doi.org/10.1215/00127094-2681605},
	volume = {163},
	year = {2014},
	bdsk-url-1 = {https://doi.org/10.1215/00127094-2681605}}

@article{BHLLMY:2015LiYau,
	author = {Bauer, Frank and Horn, Paul and Lin, Yong and Lippner, Gabor and Mangoubi, Dan and Yau, Shing-Tung},
	fjournal = {Journal of Differential Geometry},
	issn = {0022-040X},
	journal = {J. Differential Geom.},
	mrclass = {35R02 (35A23 58J35)},
	mrreviewer = {Thierry Coulhon},
	number = {3},
	pages = {359--405},
	title = {Li-{Y}au inequality on graphs},
	url = {http://projecteuclid.org/euclid.jdg/1424880980},
	volume = {99},
	year = {2015},
	bdsk-url-1 = {http://projecteuclid.org/euclid.jdg/1424880980}}

@article{CEMS:2001,
	author = {Cordero-Erausquin, Dario and McCann, Robert J. and Schmuckenschl\"{a}ger, Michael},
	doi = {10.1007/s002220100160},
	fjournal = {Inventiones Mathematicae},
	issn = {0020-9910},
	journal = {Invent. Math.},
	mrclass = {58E35 (28C99 60E15)},
	mrreviewer = {C\'{e}dric Villani},
	number = {2},
	pages = {219--257},
	title = {A {R}iemannian interpolation inequality \`a la {B}orell, {B}rascamp and {L}ieb},
	url = {https://doi.org/10.1007/s002220100160},
	volume = {146},
	year = {2001},
	bdsk-url-1 = {https://doi.org/10.1007/s002220100160}}

@article{Gigli:2015diffstr,
	author = {Gigli, Nicola},
	doi = {10.1090/memo/1113},
	fjournal = {Memoirs of the American Mathematical Society},
	isbn = {978-1-4704-1420-7},
	issn = {0065-9266},
	journal = {Mem. Amer. Math. Soc.},
	mrclass = {53C23 (30L05 30L10 49Q15 58C20)},
	mrreviewer = {Davide Vittone},
	number = {1113},
	pages = {vi+91},
	title = {On the differential structure of metric measure spaces and applications},
	url = {https://doi.org/10.1090/memo/1113},
	volume = {236},
	year = {2015},
	bdsk-url-1 = {https://doi.org/10.1090/memo/1113}}

@article{Maas:2011,
	author = {Maas, Jan},
	doi = {10.1016/j.jfa.2011.06.009},
	fjournal = {Journal of Functional Analysis},
	issn = {0022-1236},
	journal = {J. Funct. Anal.},
	mrclass = {49Q20 (49J10 60J27)},
	mrreviewer = {Nung Kwan Yip},
	number = {8},
	pages = {2250--2292},
	title = {Gradient flows of the entropy for finite {M}arkov chains},
	url = {https://doi.org/10.1016/j.jfa.2011.06.009},
	volume = {261},
	year = {2011},
	bdsk-url-1 = {https://doi.org/10.1016/j.jfa.2011.06.009}}

@article{OS:2012noncontract,
	author = {Ohta, Shin-ichi and Sturm, Karl-Theodor},
	doi = {10.1007/s00205-012-0493-8},
	fjournal = {Archive for Rational Mechanics and Analysis},
	issn = {0003-9527},
	journal = {Arch. Ration. Mech. Anal.},
	mrclass = {58J35 (35K05 35R01 58J60)},
	mrreviewer = {Beno\^{\i}t Kloeckner},
	number = {3},
	pages = {917--944},
	title = {Non-contraction of heat flow on {M}inkowski spaces},
	url = {https://doi.org/10.1007/s00205-012-0493-8},
	volume = {204},
	year = {2012},
	bdsk-url-1 = {https://doi.org/10.1007/s00205-012-0493-8}}

@article{Rajala:2012poincare,
	author = {Rajala, Tapio},
	doi = {10.1007/s00526-011-0442-7},
	fjournal = {Calculus of Variations and Partial Differential Equations},
	issn = {0944-2669},
	journal = {Calc. Var. Partial Differential Equations},
	mrclass = {53Cxx (28A33 49Q20)},
	number = {3-4},
	pages = {477--494},
	title = {Local {P}oincar\'{e} inequalities from stable curvature conditions on metric spaces},
	url = {https://doi.org/10.1007/s00526-011-0442-7},
	volume = {44},
	year = {2012},
	bdsk-url-1 = {https://doi.org/10.1007/s00526-011-0442-7}}

@book{Villani:2009oldnew,
	author = {Villani, C\'{e}dric},
	doi = {10.1007/978-3-540-71050-9},
	isbn = {978-3-540-71049-3},
	mrclass = {49-02 (28A75 37J50 49Q20 53C23 58E30)},
	mrreviewer = {Dario Cordero-Erausquin},
	note = {Old and new},
	pages = {xxii+973},
	publisher = {Springer-Verlag, Berlin},
	series = {Grundlehren der Mathematischen Wissenschaften [Fundamental Principles of Mathematical Sciences]},
	title = {Optimal transport},
	url = {https://doi.org/10.1007/978-3-540-71050-9},
	volume = {338},
	year = {2009},
	bdsk-url-1 = {https://doi.org/10.1007/978-3-540-71050-9}}

@article{CM:2017isoperimetric,
	author = {Cavalletti, Fabio and Mondino, Andrea},
	doi = {10.1007/s00222-016-0700-6},
	fjournal = {Inventiones Mathematicae},
	issn = {0020-9910},
	journal = {Invent. Math.},
	mrclass = {53C23 (49Q05 53C21)},
	mrreviewer = {Renjin Jiang},
	number = {3},
	pages = {803--849},
	title = {Sharp and rigid isoperimetric inequalities in metric-measure spaces with lower {R}icci curvature bounds},
	url = {https://doi.org/10.1007/s00222-016-0700-6},
	volume = {208},
	year = {2017},
	bdsk-url-1 = {https://doi.org/10.1007/s00222-016-0700-6}}

@article {LRSJ:2018,
    AUTHOR = {Leal, Wilmer and Restrepo, Guillermo and Stadler, Peter F. and
              Jost, J\"{u}rgen},
     TITLE = {Forman-{R}icci curvature for hypergraphs},
   JOURNAL = {Adv. Complex Syst.},
  FJOURNAL = {Advances in Complex Systems. A Multidisciplinary Journal},
    VOLUME = {24},
      YEAR = {2021},
    NUMBER = {1},
     PAGES = {Paper No. 2150003, 24},
      ISSN = {0219-5259},
   MRCLASS = {05C65 (53A70 91D30 92C42)},
  MRNUMBER = {4319948},
       DOI = {10.1142/S021952592150003X},
       URL = {https://doi.org/10.1142/S021952592150003X},
}

@article{EFLSJ:2020,
	author = {Eidi, Marzieh and Farzam, Amirhossein and Leal, Wilmer and Samal, Areejit and Jost, J\"urgen},
	date-modified = {2022-12-21 09:30:30 +0900},
	journal = {Theory in Biosciences},
	pages = {337--348},
	title = {Edge-based analysis of networks: Curvatures of graphs and hypergraphs},
	volume = {139},
	year = {2020}}

@article{Gigli:2014splitting,
	author = {Gigli, Nicola},
	doi = {10.2478/agms-2014-0006},
	fjournal = {Analysis and Geometry in Metric Spaces},
	journal = {Anal. Geom. Metr. Spaces},
	mrclass = {53C23 (53C21)},
	mrreviewer = {Yu Ding},
	number = {1},
	pages = {169--213},
	title = {An overview of the proof of the splitting theorem in spaces with non-negative {R}icci curvature},
	url = {https://doi.org/10.2478/agms-2014-0006},
	volume = {2},
	year = {2014},
	bdsk-url-1 = {https://doi.org/10.2478/agms-2014-0006}}

@article{Ketterer:2015cone,
	author = {Ketterer, Christian},
	doi = {10.1016/j.matpur.2014.10.011},
	fjournal = {Journal de Math\'{e}matiques Pures et Appliqu\'{e}es. Neuvi\`eme S\'{e}rie},
	issn = {0021-7824},
	journal = {J. Math. Pures Appl. (9)},
	mrclass = {53C23 (53C21)},
	mrreviewer = {Bo Dai},
	number = {5},
	pages = {1228--1275},
	title = {Cones over metric measure spaces and the maximal diameter theorem},
	url = {https://doi.org/10.1016/j.matpur.2014.10.011},
	volume = {103},
	year = {2015},
	bdsk-url-1 = {https://doi.org/10.1016/j.matpur.2014.10.011}}

@article{MW:2019Oll,
	author = {M\"{u}nch, Florentin and Wojciechowski, Rados\l aw K.},
	doi = {10.1016/j.aim.2019.106759},
	fjournal = {Advances in Mathematics},
	issn = {0001-8708},
	journal = {Adv. Math.},
	mrclass = {53C21 (05C12 05C63 58J65 60J27)},
	mrreviewer = {Xueping Huang},
	pages = {106759, 45},
	title = {Ollivier {R}icci curvature for general graph {L}aplacians: heat equation, {L}aplacian comparison, non-explosion and diameter bounds},
	url = {https://doi.org/10.1016/j.aim.2019.106759},
	volume = {356},
	year = {2019},
	bdsk-url-1 = {https://doi.org/10.1016/j.aim.2019.106759}}

 




 \end{document}